\numberwithin{equation}{section}
\newcommand{\ot}{\otimes}
\newcommand{\id}{\mathord{\operatorname{id}}}
\newcommand{\R}{\mathbb{R}}
\newcommand{\N}{\mathbb{N}}
\newcommand{\C}{\mathbb{C}}
\newcommand{\IrrG}{{\rm Irr}(G)}
\newcommand{\morph}{{\rm Mor}}
\newcommand{\PolG}{{\rm Pol}(G)}
\newcommand{\RepG}{{\rm Rep}(G)}
\newcommand{\irr}{{\rm Irr}}
\theoremstyle{plain}
\newtheorem{theorem}{Theorem}[section]
\newtheorem*{theoremA}{Theorem A}
\newtheorem*{theoremB}{Theorem B}
\newtheorem{lemma}[theorem]{Lemma}
\newtheorem{proposition}[theorem]{Proposition}
\theoremstyle{definition}
\newtheorem{definition}[theorem]{Definition}
\begin{document}

\begin{center}
{\LARGE\bf  Rapid decay and polynomial growth for bicrossed products}
\bigskip

{\sc Pierre Fima and Hua Wang}

\end{center}

\begin{abstract}
  \noindent We study the rapid decay property and polynomial growth
  for duals of bicrossed products coming from a matched pair of a
  discrete group and a compact group.
\end{abstract}

\section{Introduction}

In the breakthrough paper paper \cite{Ha78}, Haagerup showed that the
norm of the reduced C*-algebra $C^*_r(\mathbb{F}_N)$ of the free group
on $N$-generators $\mathbb{F}_N$, can be controlled by the Sobolev
$l^2$-norms associated to the word length function on
$\mathbb{F}_N$. This is a striking phenomenon which actually occurs in
many more cases. Jolissaint recognized this phenomenon, called Rapid
Decay (or property $(RD)$), and studied it in a systematic way in
\cite{Jo90}. Property $(RD)$ has now many applications. Let us mention
the remarkable one concerning K-theory. Property $(RD)$ allowed
Jolissaint \cite{Jo89} to show that the K-theory and $C^*_r(\Gamma)$ equals
the K-theory of subalgebras of rapidly decreasing functions on
$\Gamma$ (Jolissaint did attribute this result to Connes). This result was then used by V. Lafforgue in his approach to the
Baum-Connes conjecture via Banach KK-theory \cite{La00, La02}.

In this paper, we view discrete quantum groups as duals of compact
quantum groups. The theory of compact quantum groups has been
developed by Woronowicz \cite{Wo87, Wo88, Wo98}. Property $(RD)$ for
discrete quantum groups has been introduced and studied by Vergnioux
\cite{Ve07}. Property $(RD)$ has been refined later \cite{BVZ14} in
order to fit in the context of non-unimodular discrete quantum groups.

In this paper, we study the permanence of property $(RD)$ under the
bicrossed product construction. This construction was initiated by Kac
\cite{Ka68} in the context of finite quantum groups and was
extensively studied later by many authors in different settings. The
general construction, for locally compact quantum groups, was
developed by Vaes-Vainerman \cite{VV03}. In the context of compact
quantum groups given by matched pairs of classical groups, an easier
approach, that we will follow, was given by Fima-Mukherjee-Patri
\cite{FMP16}.

Following \cite{FMP16}, the bicrossed product construction associates
to a matched pair $(\Gamma,G)$ of a discrete group $\Gamma$ and a compact group
$G$ (see Section \ref{SectionBCP}) a compact quantum group
$\mathbb{G}$, called the bicrossed product. Given a length function
$l$ on the set of equivalence classes $\irr(\mathbb{G})$ of
irreducible unitary representations of $\mathbb{G}$ one can associate
in a canonical way, as explained in Proposition \ref{Proplength}, a
pair of length functions $(l_\Gamma,l_G)$ on $\Gamma$ and $\irr(G)$
respectively. Such a pair satisfies some compatibility relations and
every pair of length functions $(l_\Gamma,l_G)$ on $(\Gamma,\irr(G))$ satisfying
those compatibility relations will be called matched (see Definition
\ref{matchedlength}). Any matched pair $(l_\Gamma,l_G)$ on
$(\Gamma,\irr(G))$ allows one to reconstruct a canonical length function on
$\irr(\mathbb{G})$. The main result of the present paper is the
following.

\begin{theoremA}
  Let $(\Gamma,G)$ be a matched pair of a discrete group $\Gamma$ and a compact
  group $G$. Denote by $\mathbb{G}$ the bicrossed product. The
  following are equivalent.
\begin{enumerate}
\item $\widehat{\mathbb{G}}$ has property $(RD)$.
\item There exists a matched pair of length function $(l_\Gamma,l_G)$ on
  $(\Gamma,\irr(G))$ such that both $(\Gamma,l_\Gamma)$ and
  $(\widehat{G},l_G)$ have (RD).
\end{enumerate}
\end{theoremA}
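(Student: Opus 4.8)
The plan is to work with the explicit description of $\irr(\Gq)$ for a bicrossed product coming from a matched pair $(\Gam,G)$. Recall from \cite{FMP16} that the irreducible representations of $\Gq$ are indexed by pairs consisting of an orbit of the action $\Gam \actson G$ (equivalently, a double coset type datum) together with an irreducible representation of the associated stabilizer; concretely there is a quotient map relating $\irr(\Gq)$ to $\Gam$ and to $\irr(G)$, which is exactly what produces the pair $(l_\Gam, l_G)$ from a length function $l$ on $\irr(\Gq)$ in Proposition \ref{Proplength}. The $\cs$-algebra $C_r^*(\hG)$ (equivalently, the von Neumann algebra $\rL^\infty(\Gq)$ with its convolution product, or the dense Hopf-$*$-algebra $\PolG$) decomposes, via this indexing, into ``blocks'' labelled by $\Gam$-data, each block being built out of the representation theory of a stabilizer subgroup of $G$. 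I would first record the precise multiplicative structure: how the fusion rules on $\irr(\Gq)$ interact with the group law on $\Gam$ and the fusion on $\irr(G)$, so that a word-length estimate on $\irr(\Gq)$ can be unpacked into simultaneous estimates along the $\Gam$-direction and the $\hG$-direction.

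**Direction (1) $\Rightarrow$ (2).** Assume $\hG$ has $(RD)$ with respect to some length function $l$ on $\irr(\Gq)$. Let $(l_\Gam, l_G)$ be the matched pair of length functions it induces (Proposition \ref{Proplength}). I would show $(RD)$ for $\hG$ \emph{restricts} to $(RD)$ for each piece. For $(\Gam, l_\Gam)$: the classical group von Neumann algebra $\LGam$ (or $C_r^*(\Gam)$) embeds as a ``corner'' or expected subalgebra of $\rL^\infty(\Gq)$ compatibly with the convolution products — this is the sub-Hopf-algebra coming from the $G$-trivial part — and the word length on $\Gam$ is dominated by $l_\Gam$, which in turn is controlled by $l$ on the image; hence the $\rL^2$-Sobolev control for $\hG$ pushes down to the $\rL^2$-Sobolev control defining $(RD)$ for $\Gam$. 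For $(\hG, l_G)$: similarly $\rL^\infty(\Gq)$ contains a copy of $C(G)$ (functions on the compact group, with pointwise product) sitting as the fixed-point-type subalgebra for the $\Gam$-action, and $(RD)$ for a compact group's dual is essentially polynomial growth of $\dimq$-weighted balls in $\irr(G)$ with respect to $l_G$, which again is inherited from the corresponding growth statement packaged inside $(RD)$ for $\hG$. The technical care here is that $(RD)$ is a statement about operator norms of convolutions, so I must check these subalgebra inclusions are norm-compatible (they are, being images of conditional-expectation-type maps or unital $*$-homomorphisms), and that the length functions restrict correctly, which is built into the definition of ``matched''.

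**Direction (2) $\Rightarrow$ (1).** This is the main content. Given a matched pair $(l_\Gam, l_G)$ with $(\Gam, l_\Gam)$ and $(\hG, l_G)$ both $(RD)$, reconstruct the length function $l$ on $\irr(\Gq)$ (the canonical one associated to $(l_\Gam, l_G)$). To prove $(RD)$ for $\hG$ I would estimate $\|\lambda(a)\|$ for $a \in \PolG$ supported on the ball $\{ l \le n \}$ in $\irr(\Gq)$. Decompose $a$ according to the $\Gam$-indexing of $\irr(\Gq)$ into $a = \sum_{g} a_g$ where each $a_g$ lives over the part of $\irr(\Gq)$ mapping to $g \in \Gam$. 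The matched-pair compatibility ensures that if $l(x) \le n$ for $x$ over $g$, then $l_\Gam(g) \le n$ and the ``$G$-part'' of $x$ has $l_G$-length $\le n$; so each $a_g$ is controlled by an element supported on an $l_G$-ball of radius $n$ in $\irr(G)$, and $g$ ranges over an $l_\Gam$-ball of radius $n$ in $\Gam$. Now run a two-step argument: first apply the $(RD)$ estimate for $\hG$ (polynomial growth of $\dimq$-weighted $l_G$-balls) \emph{block-by-block} to bound each $\|a_g\|$ by a polynomial in $n$ times the relevant $\rL^2$-norm of $a_g$; then view $(a_g)_g$ as a finitely supported function on $\Gam$ with values in a fixed operator space (a ``twisted'' $\Gam$-convolution, where the multiplication is twisted by the matched-pair cocycle/action relating the stabilizers) and apply the $(RD)$ estimate for $(\Gam, l_\Gam)$ to assemble the blocks. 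The main obstacle I anticipate is precisely this assembly step: the product in $\rL^\infty(\Gq)$ is a genuine bicrossed (not a plain graded) product, so the stabilizer subgroups and their representation categories get moved around by the action of $\Gam$ on $G$, and one must verify that $(RD)$ for $\Gam$ still applies to operator-valued convolution with this twisting — this should follow from a vector-valued / matrix-amplified version of Jolissaint's $(RD)$ for groups (which holds automatically, since $(RD)$ for $\Gam$ passes to $\M_k \ot C_r^*(\Gam)$ and, by a standard limiting argument, to convolution with coefficients in any $\cs$-algebra), combined with a uniform-in-$g$ control of the block norms coming from the compact side. Packaging these two polynomial factors yields a single polynomial bound of the required degree, giving $(RD)$ for $\hG$ with respect to $l$.
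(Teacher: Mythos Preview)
Your overall two-step strategy for $(2)\Rightarrow(1)$ --- handle the compact side block-by-block, then assemble via $(RD)$ for $\Gamma$ --- matches the paper's, and your $(1)\Rightarrow(2)$ argument for $\widehat{G}$ is correct. But there are two substantive issues.

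First, in $(1)\Rightarrow(2)$ your argument for $\Gamma$ is backwards: $C_r^*(\Gamma)$ does \emph{not} embed in $C(\mathbb{G})=\Gamma\ltimes C(G)$ as a sub-Hopf-algebra compatible with the comultiplications (the coproduct on $u_\gamma$ is not group-like). The paper instead uses the \emph{surjection} $\Psi:C(\mathbb{G})\to C_r^*(\Gamma)$, $u_\gamma F\mapsto\lambda_\gamma F(e)$, together with an explicit lift $\xi\mapsto\widetilde{\xi}\in c_c(\widehat{\mathbb{G}})$ whose Fourier transform maps to $\lambda(\xi)$ under $\Psi$ and whose Sobolev $0$-norm is controlled by $\Vert\xi\Vert_2$.

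Second, and more seriously, in $(2)\Rightarrow(1)$ the blocks of $c_c(\widehat{\mathbb{G}})$ are indexed by $\irr(G_\gamma)$ for the \emph{stabilizer subgroups} $G_\gamma\subset G$, not by $\irr(G)$; the dimensions carry an extra factor $\vert\gamma\cdot G\vert$; and the relevant length on each block is $l_\gamma$, not $l_G$. So ``apply $(RD)$ for $\widehat{G}$ block-by-block'' does not make sense as stated: what you actually need is a uniform-in-$\gamma$ $(RD)$ estimate for each $\widehat{G_\gamma}$ with respect to $l_\gamma$, with at-worst polynomial dependence on $l_\Gamma(\gamma)$, \emph{plus} a polynomial bound on the orbit sizes $\vert\gamma\cdot G\vert$. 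The paper supplies exactly this via two technical lemmas. Lemma~\ref{basicLemma} builds from $a\in c_c(\widehat{G_\gamma})$ an element $\widetilde{a}\in c_c(\widehat{G})$ (using the induced representations ${\rm Ind}_\gamma^G$) with $\mathcal{F}_{G_\gamma}(a)=v_{\gamma\gamma}\mathcal{F}_G(\widetilde{a})$, $\Vert\widetilde{a}\Vert_{G,0}\le\Vert a\Vert_{G_\gamma,0}$, and support shifted by at most $l_\Gamma(\gamma)$. Lemma~\ref{LemmaPGRD} then extracts from polynomial growth of $(\widehat{G},l_G)$ the required uniform bounds on $\Vert\mathcal{F}_{G_\gamma}(\cdot)\Vert$, on $\vert\gamma\cdot G\vert$, and on $\sum_{l_\gamma(x)<k+1}\dim(x)^2$. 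This transfer-to-subgroups step is the technical heart of the argument and is absent from your sketch; your remark that ``stabilizer subgroups get moved around'' identifies the phenomenon but not the mechanism for controlling it.

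One further difference worth noting: the paper reduces to \emph{scalar} $(RD)$ for $\Gamma$ by first taking blockwise sup-norms --- defining finitely supported functions $\psi,\phi:\Gamma\to\R_+$ and bounding $\Vert\mathcal{F}_{\mathbb{G}}(a)b\Vert_{2,h_{\mathbb{G}}}\le\Vert\psi*\phi\Vert_{l^2(\Gamma)}$ --- rather than invoking an operator-valued amplification of $(RD)$ as you propose. Both routes are viable, but the scalar reduction sidesteps having to justify that $(RD)$ for $\Gamma$ survives the bicrossed twisting of the coefficient module.
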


For amenable discrete groups, property $(RD)$ is equivalent to
polynomial growth \cite{Jo90} and the same occurs for discrete quantum
groups \cite{Ve07}. Hence, for the compact classical group $G$ one has
that $(\widehat{G},l_G)$ has $(RD)$ if and only if it has polynomial
growth. Note that a bicrossed product of a matched pair $(\Gamma,G)$ is
co-amenable if and only if $\Gamma$ is amenable \cite{FMP16}. The following
theorem shows the permanence of polynomial growth under the bicrossed
product construction.

\begin{theoremB}
  Let $(\Gamma,G)$ be a matched pair of a discrete group $\Gamma$ and a compact
  group $G$. Denote by $\mathbb{G}$ the bicrossed product. The
  following are equivalent.
\begin{enumerate}
\item $\widehat{\mathbb{G}}$ has polynomial growth.
\item There exists a matched pair of length function $(l_\Gamma,l_G)$ on
  $(\Gamma,\irr(G))$ such that both $(\Gamma,l_\Gamma)$ and
  $(\widehat{G},l_G)$ have polynomial growth.
\end{enumerate}
\end{theoremB}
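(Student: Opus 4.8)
The plan is to reduce Theorem B to Theorem A by means of the classical dictionary relating property $(RD)$ and polynomial growth. We use the following facts, available from \cite{Jo90} for discrete groups and from \cite{Ve07} for discrete quantum groups: (i) polynomial growth with respect to a length function implies $(RD)$ with respect to the same length function (the elementary Cauchy--Schwarz estimate); (ii) for an \emph{amenable} discrete (quantum) group, $(RD)$ with respect to a length function is equivalent to polynomial growth with respect to it; (iii) a discrete (quantum) group admitting a proper length function of polynomial growth is amenable. For (iii): extract a subsequence $n_k$ with $|B(2n_k)|\le(1+\eps_k)|B(n_k)|$ and $\eps_k\to 0$; since $l(gh)\le l(g)+l(h)$ we get $gB(n_k)\subseteq B(2n_k)$ whenever $l(g)\le n_k$, so the finite balls $B(n_k)$ form a F\o lner net (in the quantum case one uses the F\o lner characterisation of amenability for Kac-type discrete quantum groups). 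Recall also that a compact group $G$ is always co-amenable, so $\hG$ is automatically amenable; that by \cite{FMP16} the bicrossed product $\Gq$ is co-amenable, equivalently $\widehat{\Gq}$ is amenable, if and only if $\Gam$ is amenable; and that $\Gq$, coming from a matched pair of classical groups, is of Kac type, so polynomial growth is unambiguous.

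For $(2)\Rightarrow(1)$: let $(l_\Gam,l_G)$ be a matched pair of length functions with $(\Gam,l_\Gam)$ and $(\hG,l_G)$ both of polynomial growth. By (i) both have $(RD)$, so Theorem A shows that $\widehat{\Gq}$ has $(RD)$. Polynomial growth of $(\Gam,l_\Gam)$ forces $l_\Gam$ proper and, by (iii), $\Gam$ amenable; hence $\widehat{\Gq}$ is amenable by \cite{FMP16}, and (ii) then upgrades $(RD)$ of $\widehat{\Gq}$ to polynomial growth.

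For $(1)\Rightarrow(2)$: suppose $\widehat{\Gq}$ has polynomial growth. By (i) it has $(RD)$, so Theorem A supplies a matched pair $(l_\Gam,l_G)$ with $(\Gam,l_\Gam)$ and $(\hG,l_G)$ both of $(RD)$. By (iii), $\widehat{\Gq}$ is amenable, so $\Gam$ is amenable by \cite{FMP16}; moreover $\hG$ is amenable because $G$ is compact. Applying (ii) to each of the two factors turns $(RD)$ into polynomial growth, which is exactly statement $(2)$.

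The substantive content is Theorem A; everything else is soft. The point requiring most care is the bookkeeping of length functions: Theorem A only furnishes \emph{some} matched pair realising $(RD)$, so one must know that for an amenable group such a length function is automatically proper — which follows by running the F\o lner estimate behind (ii) against indicator functions of finite subsets of balls — and one must track, through the correspondence between length functions on $\irr(\Gq)$ and matched pairs $(l_\Gam,l_G)$ (Proposition \ref{Proplength} and the reconstruction following it), that the several occurrences of "polynomial growth" and "$(RD)$" refer to length functions linked by that correspondence, so that Theorem A is applicable verbatim. The only other non-formal ingredient is the quantum version of (iii) — polynomial growth implies amenability for Kac-type discrete quantum groups — which is routine given the F\o lner description of amenability.
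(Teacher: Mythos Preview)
Your reduction of Theorem B to Theorem A via the amenability dictionary is correct, but it is a genuinely different route from the paper's. The paper proves both implications of Theorem B \emph{directly}, without invoking Theorem A: for $(2)\Rightarrow(1)$ it bounds $\sum_{z\in F_k}\dim(z)^2$ by hand using Lemma~\ref{LemmaPGRD} (the uniform control of $\vert\gamma\cdot G\vert$ and of $\sum_{x\in T_{\gamma,k}}\dim(x)^2$ in terms of $k$ and $l_\Gamma(\gamma)$), while for $(1)\Rightarrow(2)$ it simply observes that $x\mapsto 1(x)$ is dimension- and length-preserving, and that $\vert\{\gamma:\,k\le l_\Gamma(\gamma)<k+1\}\vert\le\sum\dim(\gamma(\varepsilon_{G_\gamma}))^2\le P(k)$.

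Your approach is more conceptual and avoids redoing any counting once Theorem A is in place, but it imports two outside facts the paper never uses: the co-amenability criterion from \cite{FMP16} and, more substantially, the implication ``polynomial growth $\Rightarrow$ amenability'' for Kac-type discrete quantum groups (your item (iii) applied to $\widehat{\mathbb{G}}$ in the $(1)\Rightarrow(2)$ direction). The latter is true and your F{\o}lner sketch is on the right track, but ``routine'' is optimistic: you should either give a precise reference (a F{\o}lner characterisation for Kac-type discrete quantum groups together with the subexponential-growth argument) or observe that in this particular instance it can be bypassed entirely, since the paper's two-line direct estimate for $(\Gamma,l_\Gamma)$ already gives polynomial growth of $\Gamma$ without ever passing through amenability of $\widehat{\mathbb{G}}$. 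The paper's approach, by contrast, is fully self-contained and yields explicit polynomial bounds; yours is shorter once Theorem A and the amenability facts are taken as black boxes.
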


The main ingredient to prove Theorem $A$ and $B$ is the classification
of the irreducible unitary representation of a bicrossed product and
the fusion rules.

The paper is organized as follows. Section $2$ is a preliminary
section in which we introduce our notations. In section $3$ we
classify the irreducible unitary representation of a bicrossed product
and describe their fusion rules. Finally, in section $4$, we prove
Theorem A and Theorem B.

\section{Preliminaries}

\subsection{Notations}

For a Hilbert space $H$, we denote by $\mathcal{U}(H)$ its unitary
group and by $\mathcal{B}(H)$ the C*-algebra of bounded linear operators on
$H$. When $H$ is finite dimensional, we denote by ${\rm Tr}$ the
unique trace on $\mathcal{B}(H)$ such that ${\rm Tr}(1)={\rm dim}(H)$. We use
the same symbol $\ot$ for the tensor product of Hilbert spaces,
unitary representations of compact quantum groups, minimal tensor
product of C*-algebras. For a compact quantum group $G$, we denote by
$\IrrG$ the set of equivalence classes of irreducible unitary
representations and $\RepG$ the collection of finite dimensional
unitary representations. We will often denote by $[u]$ the equivalence
class of an irreducible unitary representation $u$. For
$u\in{\rm Rep}(G)$, we denote by $\chi(u)$ its character, i.e., viewing
$u\in\mathcal{B}(H)\ot C(G)$ for some finite dimensional Hilbert space
$H$, one has $\chi(u):=({\rm Tr}\ot\id)(u)\in C(G)$. We denote by
$\PolG$ the unital C*-algebra obtained by taking the Span of the
coefficients of irreducible unitary representation, by $C_m(G)$ the
enveloping C*-algebra of $\PolG$ and by $C(G)$ the C*-algebra
generated by the GNS construction of the Haar state on $C_m(G)$. We
also denote by $\varepsilon\,:\,C_m(G)\rightarrow\C$ the counit and we use the same symbol
$\varepsilon\in\IrrG$ to denote the trivial representation and its class in
${\rm Irr}(G)$. In the entire paper, the word representation means a
unitary and finite dimensional representation.

\subsection{Compact bicrossed products}\label{SectionBCP}

In this section, we follow the approach and the notations of
\cite{FMP16}.

Let $(\Gamma,G)$ be a pair of a countable discrete group $\Gamma$ and a second
countable compact group $G$ with a left action
$\alpha\,:\,\Gamma\rightarrow{\rm Homeo}(G)$ of $\Gamma$ on the compact space
$G$ by homeomorphisms and a right action
$\beta\,:\,G\rightarrow S(\Gamma)$ of $G$ on the discrete space
$\Gamma$, where $S(\Gamma)$ is the Polish group of bijections of
$\Gamma$, the topology being the one of pointwise convergence i.e., the
smallest one for which the evaluation maps
$S(\Gamma)\rightarrow\Gamma$, $\sigma\mapsto\sigma(\gamma)$ are continuous, for all
$\gamma\in\Gamma$, where $\Gamma$ has the discrete topology. Here,
$\alpha$ is a group homomorphism and $\beta$ is an antihomomorphism. The pair
$(\Gamma,G)$ is called a matched pair if $\Gamma \cap G = \{e\}$ with
$e$ being the common unit for both \( G \) and \( \Gamma \), and if the
actions $\alpha$ and $\beta$ satisfy the following matched pair relations:
\begin{equation}\label{EqMatched}
  \forall g, h \in G, \, \gamma, \mu \in \Gamma, \quad
  \alpha_\gamma(gh)=\alpha_\gamma(g)\alpha_{\beta_g(\gamma)}(h),\,\,\beta_g(\gamma\mu)=\beta_{\alpha_s(g)}(\gamma)\beta_g(\mu)\quad\text{and}\quad\alpha_\gamma(e)=
  \beta_g(e)=e.
\end{equation}
We also write $\gamma\cdot g:=\beta_g(\gamma)$. From now on, we assume
\( (\Gamma, G) \) is matched. It is shown in \cite[Proposition 3.2]{FMP16} that
\( \beta \) is automatically continuous. By continuity of $\beta$ and
compactness of $G$, every $\beta$ orbit is finite. Moreover, the sets
$G_{r,s}:=\{g\in G\,:\,r\cdot g=s\}$ are clopen (see \cite[Section 2.1]{FMP16}). Let
$v_{rs}=1_{G_{r,s}}\in C(G)$ be the characteristic function of
$G_{r,s}$. It is shown in \cite[Section 2.1]{FMP16} that, for all $\beta$-orbits
$\gamma \cdot G \in \Gamma / G$, the unitary
$v_{\gamma \cdot G}:=\sum_{r,s\in\gamma\cdot G}e_{rs}\ot v_{rs}\in\mathcal{B}(l^2(\gamma\cdot G))\ot C(G)$ is a
unitary representation of $G$ as well as a magic unitary, where
$e_{rs}\in\mathcal{B}(l^2(\gamma\cdot G))$ are the canonical matrix units and the Haar
probability measure $\nu$ on $G$ is $\alpha$-invariant.

It is shown in \cite[Theorem 3.4]{FMP16} that there exists a unique compact quantum
group $\mathbb{G}$, called the bicrossed product of the matched pair
$(\Gamma,G)$, such that $C(\mathbb{G})=\Gamma{}_\alpha\ltimes C(G)$ is the reduced
C*-algebraic crossed product, generated by a copy of $C(G)$ and the
unitaries $u_\gamma$, $\gamma\in\Gamma$ and
$\Delta\,:\, C(\mathbb{G})\rightarrow C(\mathbb{G})\ot C(\mathbb{G})$ is the unique
unital $*$-homomorphism satisfying $\Delta\vert_{C(G)}=\Delta_G$ (the
comultiplication on $C(G)$) and
$\Delta(u_\gamma)=\sum_{r\in\gamma\cdot G}u_\gamma v_{\gamma r}\ot u_r$ for all
$\gamma\in\Gamma$. It is also shown that the Haar state on
$\mathbb{G}$ is a trace and is given by the formula
$h(u_\gamma F)=\delta_{\gamma,1}\int_GFd\nu$ for all $\gamma\in\Gamma$ and $F\in
C(G)$. 

\section{Representation theory of bicrossed products}

\subsection{Classification of irreducible representations}
In this section we classify the irreducible representations of a
bicrossed product. Let $(\Gamma,G)$ be a matched pair of a discrete
countable group $\Gamma$ and a second countable compact group $G$ with
actions $\alpha$, $\beta$.

For $\gamma\in\Gamma$ we denote by $G_\gamma:= G_{\gamma,\gamma}$ the stabilizer of
$\gamma$ for the action $\beta\,:\,\Gamma\curvearrowleft G$. Note that
$G_\gamma$ is an open (hence closed) subgroup of $G$, hence of finite
index: its index is $\vert\gamma\cdot G\vert$. We view
$C(G_\gamma)=v_{\gamma\gamma}C(G)\subset C(G)$ as a non-unital C*-subalgebra. Let us denote
by $\nu$ the Haar probability measure on $G$ and note that
$\nu(G_\gamma)=\frac{1}{\vert\gamma \cdot G\vert}$ so that the Haar probability measure
$\nu_{\gamma}$ on $G_\gamma$ is given by
$\nu_\gamma(A)=\vert\gamma\cdot G\vert\,\nu(A)$ for all Borel subset $A$ of $G_\gamma$.

For $\gamma\in\Gamma$ we fix a section, still denoted $\gamma$,
$\gamma\,:\,\gamma\cdot G\rightarrow G$ of the canonical surjection
$G\rightarrow\gamma\cdot G\,:\,g\mapsto\gamma\cdot g$. This means that
$\gamma\,:\,\gamma\cdot G\rightarrow G$ is an injective map such that
$\gamma\cdot\gamma(r)=r$ for all $r\in\gamma\cdot G$. We choose the section
$\gamma$ such that $\gamma(\gamma)=1$, for all $\gamma\in\Gamma$. For
$r,s\in\gamma\cdot G$, we denote by $\psi^\gamma_{r,s}$ the
$\nu$-preserving homeomorphism of $G$ defined by
$\psi^\gamma_{r,s}(g)=\gamma(r)g\gamma(s)^{-1}$. It follows from our choices that
$\psi^\gamma_{\gamma,\gamma}=\id$ for all $\gamma\in\Gamma$. Moreover, for all
$g\in G$, one has $\psi^\gamma_{r,s}(g)\in G_\gamma$ if and only if
$g \in G_{r,s}$. It follows that $\psi^\gamma_{r,r}$ is an isomorphism and an
homeomorphism from $G_r$ to $G_\gamma$ intertwining the Haar probability
measures.

Let $u\,:\,G_\gamma\rightarrow\mathcal{U}(H)$ be a unitary representation of
$G_\gamma$ and view $u$ as a continuous function
$G\rightarrow\mathcal{B}(H)$ which is zero outside $G_\gamma$ i.e. a partial isometry in
$\mathcal{B}(H)\ot C(G)$ such that
$uu^*=u^*u=\id_H\ot v_{\gamma\gamma}$. Define, for
$r,s\in\gamma\cdot G$, the partial isometry
$u_{r,s}:=u\circ\psi^\gamma_{r,s}:=(g\mapsto u(\psi^\gamma_{r,s}(g)))\in \mathcal{B}(H)\ot C(G)$ and note
that
$u_{r,s}^*u_{r,s}=u_{r,s}u_{r,s}^*=\id_H\ot
1_{G_{r,s}}$. In the sequel we view
$u_{r,s}\in \mathcal{B}(H)\ot C(G)\subset\mathcal{B}(H)\ot C(\mathbb{G})$ and we define:
$$\gamma(u):=\sum_{r,s\in\gamma\cdot G}e_{rs}\ot(1\ot u_rv_{rs})u_{r,s}\in\mathcal{B}(l^2(\gamma\cdot G))\ot \mathcal{B}(H)\ot C(\mathbb{G}),$$
where we recall that $e_{rs}$, for $r,s\in\gamma\cdot G$, are the matrix units
associated to the canonical orthonormal basis of $l^2(\gamma\cdot G)$.

The irreducible unitary representations of $\mathbb{G}$ are described
as follows.

\begin{theorem}\label{IrrBC}
The following holds.
\begin{enumerate}
\item For all $\gamma\in\Gamma$ and $u\in{\rm Rep}(G_\gamma)$ one has $\gamma(u)\in{\rm Rep}(\mathbb{G})$.
\item The character of $\gamma(u)$ is $\chi(\gamma(u))=\sum_{r\in\gamma\cdot G}u_rv_{rr}\chi(u)\circ\psi^\gamma_{r,r}$.
\item For all $\gamma,\mu\in\Gamma$, $u\in{\rm Rep}(G_\gamma)$ and $w\in{\rm Rep}(G_\mu)$ one has
  $${\rm dim}({\rm Mor}_\mathbb{G}(\gamma(u),\mu(w)))=\delta_{\gamma\cdot G,\mu\cdot G}{\rm
    dim}({\rm Mor}_{G_\gamma}(u,w\circ\psi_{\gamma,\gamma}^\mu)).$$
\item For all $\gamma\in\Gamma$ and $u\in{\rm Rep}(G_\gamma)$ one has
  $\overline{\gamma(u)}\simeq\gamma^{-1}(\overline{u}\circ\alpha_{\gamma^{-1}})$ (which makes sense
  since
  $\alpha_{\gamma^{-1}}\,:\, G_{\gamma^{-1}}\rightarrow G_\gamma$ is a group isomorphism and an
  homeomorphism).
\item $\gamma(u)$ is irreducible if and only if $u$ is
  irreducible. Moreover, for any irreducible unitary representation
  $u$ of $\mathbb{G}$ there exists $\gamma\in\Gamma$ and $v$ an irreducible
  representation of $G_\gamma$ such that $u\simeq\gamma(v)$.
\end{enumerate}
\end{theorem}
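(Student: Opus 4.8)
The plan is to prove the five parts in order, each relying on the previous ones, reducing everything to the classical representation theory of the finite-index subgroups $G_\gamma\subset G$ together with the Haar-state formula for $\mathbb{G}$.

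\emph{Parts (1) and (2).} Unitarity of $\gamma(u)$ is a short direct computation: in $\gamma(u)^*\gamma(u)$ one uses $e_{sr}e_{r's'}=\delta_{r,r'}e_{ss'}$, $v_{rs}v_{rs'}=\delta_{s,s'}v_{rs}$, $u_r^*u_r=1$, $u_{r,s}^*u_{r,s}=\id_H\ot v_{rs}$ and $\sum_{r}v_{rs}=1=\sum_{s}v_{rs}$ to collapse the sum to $1$, and $\gamma(u)\gamma(u)^*$ is symmetric. The representation identity $(\id\ot\Delta)(\gamma(u))=\gamma(u)_{12}\gamma(u)_{13}$ is longer but routine: one substitutes $\Delta(u_r)=\sum_{t\in r\cdot G}u_rv_{rt}\ot u_t$, the comultiplication $\Delta_G(v_{rs})=\sum_t v_{rt}\ot v_{ts}$ of the magic unitary $v_{\gamma\cdot G}$, and the cocycle identity $\psi^\gamma_{r,s}(gh)=\psi^\gamma_{r,t}(g)\psi^\gamma_{t,s}(h)$ — which, together with the disjointness of the $G_{r,t}$, gives $(\id\ot\Delta_G)(u_{r,s})=\sum_{t}(u_{r,t})_{12}(u_{t,s})_{13}$ — and checks that all pieces reassemble correctly; this is where the matched pair relations \eqref{EqMatched} are used. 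Applying $\operatorname{Tr}\ot\operatorname{Tr}\ot\id$ then yields (2) at once, $\chi(\gamma(u))=\sum_r u_rv_{rr}(\operatorname{Tr}_H\ot\id)(u_{r,r})=\sum_r u_rv_{rr}\,\chi(u)\circ\psi^\gamma_{r,r}$; it is convenient to record the equivalent form $\chi(\gamma(u))=\sum_{r\in\gamma\cdot G}u_r(\chi(u)\circ\psi^\gamma_{r,r})$, since $\chi(u)\circ\psi^\gamma_{r,r}$ is already supported in $G_{r,r}=G_r$.

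\emph{Part (3).} Since the Haar state of $\mathbb{G}$ is a trace, $\dim\morph_\mathbb{G}(x,y)=h(\chi(x)^*\chi(y))$. With the formula from (2), $\chi(\gamma(u))^*\chi(\mu(w))=\sum_{r\in\gamma\cdot G,\,s\in\mu\cdot G}(\overline{\chi(u)}\circ\psi^\gamma_{r,r})\,u_r^*u_s\,(\chi(w)\circ\psi^\mu_{s,s})$; moving the $C(G)$-parts past $u_r^*u_s=u_{r^{-1}s}$ and applying $h(u_\eta F)=\delta_{\eta,1}\int_G F\,d\nu$ kills every term with $s\neq r$, and since distinct orbits are disjoint this forces $\gamma\cdot G=\mu\cdot G$. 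After the $\nu$-preserving change of variables $h=\psi^\gamma_{r,r}(g)$ what remains is $\dim\morph_\mathbb{G}(\gamma(u),\mu(w))=\delta_{\gamma\cdot G,\mu\cdot G}\,\tfrac{1}{|\gamma\cdot G|}\sum_{r\in\gamma\cdot G}\int_{G_\gamma}\overline{\chi(u)}(h)\,\chi(w)\big(\mu(r)\gamma(r)^{-1}h\gamma(r)\mu(r)^{-1}\big)\,d\nu_\gamma(h)$. The crux is that each summand is independent of $r$: a short computation with the right action $\beta$, using $\gamma\cdot\gamma(r)=r=\mu\cdot\mu(r)$ and $\mu\cdot\mu(\gamma)=\gamma$, gives $\mu\cdot\big(\mu(r)\gamma(r)^{-1}\mu(\gamma)^{-1}\big)=\mu$, i.e.\ $\mu(r)\gamma(r)^{-1}\mu(\gamma)^{-1}\in G_\mu$, so conjugation by $\mu(r)\gamma(r)^{-1}$ and by $\mu(\gamma)$ differ by an inner automorphism of $G_\mu$; as $\chi(w)$ is a class function, every summand equals $\int_{G_\gamma}\overline{\chi(u)}(h)\,\chi(w)(\mu(\gamma)h\mu(\gamma)^{-1})\,d\nu_\gamma(h)=\dim\morph_{G_\gamma}(u,w\circ\psi^\mu_{\gamma,\gamma})$, and the $|\gamma\cdot G|$ identical summands cancel the prefactor.

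\emph{Parts (4) and (5).} For (4): finite-dimensional representations of a compact quantum group are equivalent iff their characters agree, and $\chi(\overline{x})=\chi(x)^*$, so it suffices to check $\chi(\gamma(u))^*=\chi\big(\gamma^{-1}(\overline u\circ\alpha_{\gamma^{-1}})\big)$; starting from $\chi(\gamma(u))^*=\sum_{r\in\gamma\cdot G}(\overline{\chi(u)}\circ\psi^\gamma_{r,r})u_r^*$, moving $u_r^*=u_{r^{-1}}$ to the left (which $\alpha$-twists the function) and reindexing by $t=r^{-1}\in\gamma^{-1}\cdot G$, the remaining pointwise identity relating $\alpha_{\gamma^{-1}}\circ\psi^{\gamma^{-1}}_{t,t}$ to the twisted $\psi^\gamma_{t^{-1},t^{-1}}$ follows from the first relation in \eqref{EqMatched} and the section conventions. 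For (5): applying (3) with $\mu=\gamma$, $w=u$ and $\psi^\gamma_{\gamma,\gamma}=\id$ gives $\dim\morph_\mathbb{G}(\gamma(u),\gamma(u))=\dim\morph_{G_\gamma}(u,u)$, so $\gamma(u)$ is irreducible iff $u$ is; and since $\gamma(\cdot)$ commutes with direct sums, decomposing $u$ into irreducibles of $G_\gamma$ shows every $\gamma(u)$ is a direct sum of representations $\gamma(v)$ with $v\in\irr(G_\gamma)$. To see these exhaust $\irr(\mathbb{G})$, I would show the coefficients of the $\gamma(u)$ are norm-dense in $C(\mathbb{G})$: slicing $\gamma(u)$ by a functional $\langle\delta_s,\,\cdot\,\delta_r\rangle\ot\omega'$ isolates the single term $u_r\big((\omega'\ot\id)(u)\big)\circ\psi^\gamma_{r,s}$, and as $u$ runs over $\operatorname{Rep}(G_\gamma)$ and $\omega'$ over functionals on $\bh$, the functions $\big((\omega'\ot\id)(u)\big)\circ\psi^\gamma_{r,s}$ are dense in $C(G_{r,s})$ (Peter--Weyl for $G_\gamma$, transported by the homeomorphism $\psi^\gamma_{r,s}$ and extended by $0$, $G_{r,s}$ being clopen), so one gets all of $u_r\,C(G_{r,s})$; fixing $r$ and summing over $s\in\gamma\cdot G$ (where $G=\bigsqcup_{s}G_{r,s}$) gives $u_r\,C(G)$, and since $\eta\in\eta\cdot G$ for every $\eta\in\Gamma$ one gets $u_\eta\,C(G)$ for all $\eta$, a dense $*$-subalgebra of $C(\mathbb{G})$. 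As $h$ is faithful on $C(\mathbb{G})$ and, by Peter--Weyl, $\Lto(\mathbb{G})$ is the orthogonal sum of the coefficient subspaces of the irreducibles, this density forces every irreducible of $\mathbb{G}$ to occur in some $\gamma(u)$, hence to be equivalent to some $\gamma(v)$ with $v\in\irr(G_\gamma)$.

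\emph{Main obstacle.} The two genuinely nonformal points are the cancellation of cross terms in the representation identity in (1), where the matched pair relations are essential, and the identity $\mu(r)\gamma(r)^{-1}\mu(\gamma)^{-1}\in G_\mu$ in (3), which is what collapses the orbit sum to a single classical intertwiner space; the density step in (5) requires a little care but is otherwise routine.
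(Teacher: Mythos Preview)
Your proposal is correct and follows essentially the same route as the paper's own proof: the cocycle identity $(\id\ot\Delta_G)(u_{r,s})=\sum_t(u_{r,t})_{12}(u_{t,s})_{13}$ for (1), the Haar-state character computation together with the key fact $\mu(r)\gamma(r)^{-1}\mu(\gamma)^{-1}\in G_\mu$ for (3), the character comparison $\chi(\gamma(u))^*=\chi(\gamma^{-1}(\overline u\circ\alpha_{\gamma^{-1}}))$ for (4), and density of coefficients for (5) are exactly the paper's arguments. The only cosmetic differences are that the paper computes $h(\chi(\gamma(u))\chi(\mu(w))^*)$ rather than $h(\chi(\gamma(u))^*\chi(\mu(w)))$ in (3) (equivalent by traciality), and in (5) the paper isolates only the $(\gamma,\gamma)$-coefficient $u_\gamma u_{ij}=\gamma(u)_{\gamma,\gamma,i,j}$ while you spell out the full $(r,s)$-slice giving $u_rC(G_{r,s})$; your version is a touch more explicit but the content is identical.
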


\begin{proof}
  $(1)$. Writing $\gamma(u)=\sum_{r,s}e_{r,s}\ot V_{r,s}$, where
  $V_{r,s}:=(1\ot u_rv_{rs})u_{r,s}\in\mathcal{B}(H)\ot C(\mathbb{G})$, it
  suffices to check that, for all $r,s\in\gamma\cdot G$ one has
  $(\id\ot\Delta)(V_{r,s})=\sum_{t\in\gamma\cdot G}(V_{r,t})_{12}(V_{t,s})_{13}$. We
  first claim that, for all $r,s\in\gamma\cdot G$,
  $(\id\ot\Delta)(u_{r,s})=\sum_{t\in\gamma\cdot G}(u_{r,t})_{12}(u_{t,s})_{13}$. To
  check our claim, first recall that, for all $r,s\in\gamma\cdot G$ one has
  $\psi^\gamma_{r,s}(g)\in G_\gamma$ if and only if $r\cdot g=s$. Let
  $r,s\in\gamma\cdot G$ and $g,h\in G$. For $t=r\cdot g\in\gamma\cdot G$ one has :
$$u_{r,s}(gh)=u(\gamma(r)g\gamma(t)^{-1}\gamma(t)h\gamma(s)^{-1})=u(\psi^\gamma_{r,t}(g)\psi^\gamma_{t,s}(h))=\left\{\begin{array}{ll}u_{r,t}(g)u_{t,s}(h)&\text{if }r\cdot gh=s,\\0&\text{otherwise.}\end{array}\right.$$
Since we also have $u_{t,s}(h)=0$ whenever $r\cdot gh\neq s$ we find, in both
cases, that $u_{r,s}(gh)=u_{r,t}(g)u_{t,s}(h)$. Now, for
$t\neq r\cdot g$ we have $u_{r,t}(g)=0$ so the following formulae holds for
any $r,s\in \gamma\cdot G$ and any $g,h\in G$:
$$v_{r,t}(g)u_{r,s}(gh)=u_{r,t}(g)u_{t,s}(h).$$
Hence, for all $r,s,t\in\gamma\cdot G$,
$(1\ot v_{r,t}\ot
1)(\id\ot\Delta)(u_{r,s})=(u_{r,t})_{12}(u_{t,s})_{13}$. Using this we
find:
\begin{eqnarray*}
  \sum_{t\in\gamma\cdot G}(V_{r,t})_{12}(V_{t,s})_{13}&=&\sum_t(1\ot u_rv_{rt}\ot 1)(u_{r,t})_{12}(1\ot 1\ot u_tv_{ts})(u_{t,s})_{13}\\
                                        &=&\sum_t(1\ot u_rv_{rt}\ot u_tv_{ts})(u_{r,t})_{12}(u_{t,s})_{13}=\left(1\ot(\sum_tu_rv_{rt}\ot u_tv_{ts})\right)(\id\ot\Delta)(u_{r,s}).
\end{eqnarray*}
Since $v_\gamma$ is a unitary representation of $G$ and a magic unitary we
also have:
$$\Delta(u_rv_{rs})=\sum_{t,t'}(u_rv_{rt}\ot u_t)(v_{rt'}\ot v_{t's})=\sum_tu_rv_{rt}\ot u_tv_{ts}.$$
This shows that $\gamma(u)$ is a representation of $\mathbb{G}$. We now
check that $\gamma(u)$ is unitary. As before, since for all
$r,s\in\gamma\cdot G$ one has $\psi^\gamma_{r,s}(g)\in G_\gamma$ if and only if
$r\cdot g=s$ and because $u$ is a unitary representation of
$G_\gamma$, we have, for all $r,t\in\gamma\cdot G$,
$(1\ot v_{rt})u_{r,t}u_{r,t}^*=1\ot v_{rt}$. Hence,
\begin{eqnarray*}
  \sum_{t\in\gamma\cdot G}V_{r,t}V_{s,t}^*&=&\sum_t(1\ot u_r)(1\ot v_{rt})u_{r,t}u_{s,t}^*(1\ot v_{st})(1\ot u_s^*)\\
                            &=&\delta_{r,s}(1\ot u_r)\left(\sum_t(1\ot v_{rt})u_{r,t}u_{r,t}^*\right)(1\ot u_r^*)=\delta_{r,s}(1\ot u_r)\left(\sum_t(1\ot v_{rt})\right)(1\ot u_r^*)\\
                            &=&\delta_{r,s}.
\end{eqnarray*}
A similar computations shows that
$\sum_{t\in\gamma\cdot G}V_{t,r}^*V_{t,s}=\delta_{r,s}$.

$(2)$. The character of $\gamma(u)$ is given by
$$\chi(\gamma(u))=\sum_{r\in\gamma\cdot G}({\rm Tr}\ot\id)(V_{r,r})=\sum_ru_rv_{rr}({\rm Tr}\ot\id)(u_{r,r})=\sum_ru_rv_{rr}\chi(u)\circ\psi^\gamma_{r,r}.$$

$(3)$. Let $\gamma,\mu\in\Gamma$ and $u,w$ be representations of
$G_\gamma$ and $G_\mu$ respectively. Since the Haar measure on $G$ is
invariant under the action $\alpha$ and the homeomorphisms
$\psi^\gamma_{r,r}$ and $\psi^\mu_{r,r}$, we find, by the character formulae in $2$ and the crossed-product relations,
\begin{eqnarray*}
  {\rm dim}({\rm Mor}(\gamma(u),\mu(w)))&=&h(\chi(\gamma(u))\chi(\mu(w))^*)=\sum_{r\in\gamma\cdot G,\,s\in\mu\cdot G}h(u_{rs^{-1}}\alpha_s(v_{rr}v_{ss}\chi(u)\circ\psi^\gamma_{r,r}(\chi(w)\circ\psi^\mu_{s,s})^*))\\
                                 &=&\delta_{\gamma\cdot G,\mu\cdot G}\sum_{r\in\gamma\cdot G}\int_G\alpha_r(v_{rr}(\chi(u)\circ\psi^\gamma_{r,r})(\overline{\chi(w)}\circ\psi^\mu_{r,r}))d\nu\\
                                 &=&\delta_{\gamma\cdot G,\mu\cdot G}\sum_{r\in\gamma\cdot G}\int_{G_r}(\chi(u)\circ\psi^\gamma_{r,r})(\chi(\overline{w})\circ\psi^\mu_{r,r})d\nu\\
                                 &=&\delta_{\gamma\cdot G,\mu\cdot G}\sum_{r\in\gamma\cdot G}\int_{G_\mu}\chi(u)\circ(\psi^\mu_{\gamma,\gamma})^{-1}(\chi(\overline{w})\circ\psi^\mu_{r,r}\circ(\psi^\gamma_{r,r})^{-1}\circ(\psi^\mu_{\gamma,\gamma})^{-1})d\nu\\
\end{eqnarray*}
Now, note that
$\psi^\mu_{r,r}\circ(\psi^\gamma_{r,r})^{-1}\circ(\psi^\mu_{\gamma,\gamma})^{-1}={\rm Ad}(h)$, where
$h=\mu(r)\gamma(r)^{-1}\mu(\gamma)^{-1}$. Moreover,
$\mu\cdot h=\mu$ since:
$$\mu\cdot \mu(r)\gamma(r)^{-1}\mu(\gamma)^{-1}=r\cdot\gamma(r)^{-1}\mu(\gamma)^{-1}=\gamma\cdot\mu(\gamma)^{-1}=\mu.$$
Hence, $h\in G_\mu$. Since the characters of finite dimensional
unitary representation of a group $\Lambda$ are central functions i.e.
invariant under ${\rm Ad}(\lambda)$ for all $\lambda\in \Lambda$, we have
$\chi(\overline{w})\circ\psi^\mu_{r,r}\circ(\psi^\gamma_{r,r})^{-1}\circ(\psi^\mu_{\gamma,\gamma})^{-1}=\chi(\overline{w})\circ{\rm
  Ad}(h)=\chi(\overline{w})$. Hence:
\begin{eqnarray*}
{\rm dim}({\rm Mor}(\gamma(u),\mu(w)))&=&\delta_{\gamma\cdot G,\mu\cdot G}\sum_{r\in\gamma\cdot G}\int_{G_\mu}\chi(u)\circ(\psi^\mu_{\gamma,\gamma})^{-1}\chi(\overline{w})d\nu
=\delta_{\gamma\cdot G,\mu\cdot G}\int_{G_\mu}\chi(u)\circ(\psi^\mu_{\gamma,\gamma})^{-1}\chi(\overline{w})d\nu_\mu\\
&=&\delta_{\gamma\cdot G,\mu\cdot G}{\rm dim}({\rm Mor}_{G_\mu}(u\circ(\psi_{\gamma,\gamma}^\mu)^{-1},w))=\delta_{\gamma\cdot G,\mu\cdot G}\int_{G_\gamma}\chi(u)\chi(\overline{w}\circ\psi^\mu_{\gamma,\gamma})d\nu_\mu\\
&=&\delta_{\gamma\cdot G,\mu\cdot G}{\rm dim}({\rm Mor}_{G_\gamma}(u,w\circ\psi_{\gamma,\gamma}^\mu).
\end{eqnarray*}

$(4)$. Note that, by the bicrossed product relations, we have, for all
$\gamma\in\Gamma$ and $g\in G$,
$(\gamma\cdot g)^{-1}=\gamma^{-1}\cdot\alpha_{\gamma}(g)$. Hence
$v_{\gamma^{-1}\gamma^{-1}}\circ\alpha_\gamma=v_{\gamma\gamma}$ and
$(\gamma\cdot G)^{-1}=\gamma^{-1}\cdot G$. In particular,
$\alpha_{\gamma}\,:\, G_{\gamma}\rightarrow G_{\gamma^{-1}}$ is an homeomorphism and, by the
bicrossed product relations, one has, for all $g\in G_\gamma$ and
$h\in G$,
$\alpha_\gamma(gh)=\alpha_\gamma(g)\alpha_{\gamma\cdot g}(h)=\alpha_\gamma(g)\alpha_\gamma(h)$ so that
$\alpha_\gamma\,:\, G_{\gamma}\rightarrow G_{\gamma^{-1}}$ is also a group homomorphism.

For $r\in\gamma\cdot G$ one has
$\gamma^{-1}\cdot\alpha_\gamma(\gamma(r))=(\gamma\cdot\gamma(r))^{-1}=r^{-1}=\gamma^{-1}\cdot\gamma^{-1}(r^{-1})$. This
implies that, for all $\gamma\in\Gamma$, there exists a map
$\eta_\gamma\,:\,\gamma\cdot G\rightarrow G_{\gamma^{-1}}$ such that, for all
$r\in\gamma\cdot G$, one has $\alpha_\gamma(\gamma(r))=\eta_\gamma(r)\gamma^{-1}(r^{-1})$.

Let now $r\in\gamma\cdot G$ and $g\in G_r$. One has, using the bicrossed product
relations, that
$e=\alpha_{r}(\gamma(r){\gamma(r)}^{-1})=\alpha_{\gamma}(\gamma(r))\alpha_{r}({\gamma(r)}^{-1})$, hence
$$(\alpha_\gamma\circ\psi^\gamma_{r,r})(g)=\alpha_\gamma(\gamma(r))\alpha_r(g)\alpha_r(\gamma(r)^{-1})=\alpha_\gamma(\gamma(r))\alpha_r(g)\bigl(\alpha_\gamma(\gamma(r)))\bigr)^{-1}=\eta_\gamma(r)(\psi^{\gamma^{-1}}_{r^{-1},r^{-1}}\circ\alpha_r)(g)(\eta_\gamma(r))^{-1}.$$
Hence, for all $\gamma\in\Gamma$, if $w\in{\rm Rep}(G_{\gamma^{-1}})$, since
$\chi(w)\in C(G_{\gamma^{-1}})$ is central we have
$$\chi(w)\circ\alpha_\gamma\circ\psi^\gamma_{r,r}(g)=\chi(w)\circ\psi^{\gamma^{-1}}_{r^{-1},r^{-1}}\circ\alpha_r(g)\quad\text{for all  }r\in\gamma\cdot G, \, g\in G_r.$$

Since, as we seen above, $\gamma^{-1}\cdot G=(\gamma\cdot G)^{-1}$ and because $\chi(\overline{u}\circ\alpha_{\gamma^{-1}})=\chi(\overline{u})\circ\alpha_{\gamma^{-1}}$ we find, by the character formulae in $2$, $\chi(\gamma^{-1}(\overline{u}\circ\alpha_{\gamma^{-1}}))=\sum_{r\in\gamma\cdot G}u_{r^{-1}}v_{r^{-1}r^{-1}}\chi(\overline{u})\circ\alpha_{\gamma^{-1}}\circ\psi^{\gamma^{-1}}_{r^{-1},r^{-1}}$. It then follows from the crossed-product relations and the discussion above :
\begin{eqnarray*}
\chi(\gamma^{-1}(\overline{u}\circ\alpha_{\gamma^{-1}}))&=&\sum_{r\in\gamma\cdot G}u_{r^{-1}}v_{r^{-1}r^{-1}}\chi(\overline{u})\circ\alpha_{\gamma^{-1}}\circ\psi^{\gamma^{-1}}_{r^{-1},r^{-1}}\\
&=&\sum_{r\in\gamma\cdot G}(\chi(\overline{u})\circ\alpha_{\gamma^{-1}}\circ\psi^{\gamma^{-1}}_{r^{-1},r^{-1}}\circ\alpha_r)(v_{r^{-1}r^{-1}}\circ\alpha_r )u_{r^{-1}}\\
&=&\sum_{r\in\gamma\cdot G}\chi(\overline{u})\circ\psi^{\gamma}_{r,r}v_{rr}u_{r}^*=\sum_{r\in\gamma\cdot G}(\chi(u)\circ\psi^{\gamma}_{r,r}v_{rr})^*u_{r}^*\\
&=&\chi(\gamma(u))^*
\end{eqnarray*}

$(5)$. The statement on irreducibility following from $3$, it suffices, by the general theory, to show that the linear span
$X$ of coefficients of representations of the form $\gamma(u)$, for
$\gamma\in\Gamma$ and $u$ an irreducible unitary representation of
$G_\gamma$, is a dense subset of $C(\mathbb{G})$. Note that, for all
$\gamma\in\Gamma$, the relation
$1=\sum_{r\in\gamma\cdot G}v_{\gamma r}$ implies that any function in
$C(G)$ is a sum of continuous functions with support in
$G_{\gamma,r}:=\{g\in G\,:\,\gamma\cdot g=r\}$, for
$r\in\gamma\cdot G$. Moreover, since
$G_{\gamma,r}=(\psi^\gamma_{\gamma,r})^{-1}(G_\gamma)$, any continuous function on
$G$ with support in $G_{\gamma, r}$ is of the form
$ F\circ\psi^\gamma_{\gamma,r}$, where $F\in C(G_\gamma)$. Since the linear span of
coefficients of irreducible unitary representation of $G_\gamma$ is dense
in $C(G_\gamma)$, it suffices to show that, for any $\gamma\in\Gamma$, for any
irreducible unitary representation of $G_\gamma$,
$u\,:\,G_\gamma\rightarrow\mathcal{U}(H)$, any coefficient
$u_{ij}\in C(G_\gamma)=v_{\gamma \gamma}C(G)\subset C(G)$ satisfies
$u_\gamma u_{ij}\in X$. But this is obvious since one has
\begin{displaymath}
  u_\gamma u_{ij}=u_\gamma v_{\gamma\gamma}u_{i,j}=u_\gamma
v_{\gamma\gamma}u_{i,j}\circ\psi^\gamma_{\gamma,\gamma}=\gamma(u)_{\gamma,\gamma,i,j}\in X. \qedhere
\end{displaymath}
\end{proof}

Finally, the fusion rules are described as follows.

Let $\gamma,\mu\in\Gamma$,
$u\,:\, G_\gamma\rightarrow\mathcal{U}(H_u)$,
$v\,:\, G_\mu\rightarrow\mathcal{U}(H_v)$ by unitary representations of
$G_\gamma$ and $G_\mu$ respectively. For any
$r\in(\gamma\cdot G)(\mu\cdot G)$, we define the $r$-twisted tensor product of
$u$ and $v$, denoted $u\underset{r}{\ot} v$ as a unitary
representation of $G_r$ on $K_r\ot H_u\ot H_v$, where
$$K_r:={\rm Span}(\{e_s\ot e_t\,:\,s\in\gamma\cdot G\text{ and }t\in\mu\cdot G\text{ such that }st=r\})\subset l^2(\gamma\cdot G)\ot l^2(\mu\cdot G).$$
For $g\in G$, we define:
$$(u\underset{r}{\ot}v)(g)=\sum_{\substack{s,s'\in\gamma\cdot G\\t,t'\in\mu\cdot G\\st=r=s't'}}e_{ss'}\ot e_{tt'}\ot v_{ss'}(\alpha_t(g))v_{tt'}(g)u(\psi^\gamma_{s,s'}(\alpha_t(g)))\ot v(\psi^\mu_{t,t'}(g))\in\mathcal{U}(K_r\ot H_u\ot H_v).$$

\begin{theorem}\label{ThmFusion}
The following holds.
\begin{enumerate}
\item For all $\gamma,\mu\in\Gamma$, all
  $r\in(\gamma\cdot G)(\mu\cdot G)$ and all $u,v$ finite dimensional unitary
  representations of $G_\gamma$, $G_\mu$ respectively the element
  $u\underset{r}{\ot} v$ is a unitary representation of $G_r$.
\item The character of $u\underset{r}{\ot} w$ is
  $\chi(u\underset{r}{\ot} v)=\sum_{s\in\gamma\cdot G,t\in\mu\cdot
    G,\,st=r}(v_{ss}\circ\alpha_t)v_{tt}(\chi(u)\circ\psi^\gamma_{s,s}\circ\alpha_t)(\chi(v)\circ\psi^\mu_{t,t}).$
\item For all $\gamma_1,\gamma_2,\gamma_3\in\Gamma$ and all $u,v,w$ unitary representations
  of $G_{\gamma_1}$, $G_{\gamma_2}$ and $G_{\gamma_3}$ respectively, the number
  $\dim(\morph_{\mathbb{G}}(\gamma_1(u),\gamma_2(v)\ot\gamma_3(w)))$ is equal to:
$$\left\{\begin{array}{ll}\frac{1}{\vert \gamma_1\cdot G\vert}\sum_{r\in\gamma_1\cdot G\cap(\gamma_2\cdot G)(\gamma_3\cdot G)}{\rm dim}(\morph_{G_r}(u\circ\psi^{\gamma_1}_{r,r},v\underset{r}{\ot}w))&\text{if }\gamma_1\cdot G\cap(\gamma_2\cdot G)(\gamma_3\cdot G)\neq\emptyset,\\0&\text{otherwise.}\end{array}\right.$$
\end{enumerate}
\end{theorem}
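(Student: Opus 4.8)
The plan is to prove the three items of Theorem \ref{ThmFusion} in order, using the character calculus established in Theorem \ref{IrrBC} as the main computational engine. For item (1), I would verify directly that $u\underset{r}{\ot}v$ is a unitary representation of $G_r$. The representation property amounts to checking $(\id\ot\Delta_{G})$ applied to the matrix entries factors as a product over an intermediate index, which reduces to the cocycle-type identities $\psi^\gamma_{s,s'}(\alpha_t(gh)) = \psi^\gamma_{s,s''}(\alpha_{t'}(g))\,\psi^\gamma_{s'',s'}(\alpha_t(h))$ (valid when $s''t'' = r$ with $t'' = t\cdot$ something determined by $g$) together with the fact that $v_\gamma$ and $v_\mu$ are magic unitaries, exactly as in the proof of Theorem \ref{IrrBC}(1). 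The bookkeeping is more elaborate because two $\beta$-orbits and the twist by $\alpha_t$ are involved, but no new idea is needed; unitarity follows the same telescoping pattern as the computation of $\sum_t V_{r,t}V_{s,t}^* = \delta_{r,s}$, using that on the relevant clopen pieces the partial isometries $u(\psi^\gamma_{s,s'}(\alpha_t(\cdot)))$ and $v(\psi^\mu_{t,t'}(\cdot))$ have the prescribed support projections. Item (2) is then immediate: apply $({\rm Tr}\ot{\rm Tr}\ot\id)$ to the defining formula, keeping only the diagonal terms $s=s'$, $t=t'$, and use ${\rm Tr}\ot\id$ applied to $v_{ss'}$-type scalars to collapse to the stated sum.

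The substance is item (3). The strategy is to compute $\dim(\morph_\mathbb{G}(\gamma_1(u),\gamma_2(v)\ot\gamma_3(w)))$ as the Haar-state pairing $h\big(\chi(\gamma_1(u))\,\chi(\gamma_2(v)\ot\gamma_3(w))^*\big)$, where $\chi(\gamma_2(v)\ot\gamma_3(w)) = \chi(\gamma_2(v))\chi(\gamma_3(w))$. I would substitute the character formula from Theorem \ref{IrrBC}(2) for each of the three factors, obtaining a triple sum over $s\in\gamma_1\cdot G$, $t\in\gamma_2\cdot G$, $t'\in\gamma_3\cdot G$ of terms of the shape $h\big(u_s v_{ss}(\chi(u)\circ\psi^{\gamma_1}_{s,s})\cdot (v_{t'}v_{t t}\cdots)(u_{t'}v_{t't'}\cdots)^*\big)$. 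Using the crossed-product relations $u_\gamma F = \alpha_\gamma(F) u_\gamma$ and $u_\gamma u_\mu = u_{\gamma\mu}$, the group-algebra part contributes $\delta$-constraints forcing (after reindexing) the product of the $\Gamma$-labels to match $s$, i.e.\ forcing the pair $(t,t')$ with $tt'$ lying in the $\beta$-orbit of $s$; and the Haar state $h(u_\gamma F) = \delta_{\gamma,e}\int_G F\,d\nu$ then kills everything except $\gamma$-label $e$, leaving an integral over $G$ of a product of characteristic functions and character functions precomposed with the various $\psi$'s and $\alpha$'s. The characteristic functions restrict the integral to $G_r$ for the surviving index $r$ (this is where the condition $r\in\gamma_1\cdot G\cap(\gamma_2\cdot G)(\gamma_3\cdot G)$ enters), and converting $\int_{G_r}\cdots d\nu = \frac{1}{\vert\gamma_1\cdot G\vert}\int_{G_r}\cdots d\nu_r$ produces the prefactor $\frac{1}{\vert\gamma_1\cdot G\vert}$. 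Comparing the resulting integrand against the character formula for $u\underset{r}{\ot}w$ from item (2) and against $\chi(u)\circ\psi^{\gamma_1}_{r,r}$, one recognizes the integral as $\dim(\morph_{G_r}(u\circ\psi^{\gamma_1}_{r,r},\, v\underset{r}{\ot}w))$, summed over the admissible $r$.

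The main obstacle is the reindexing/centrality step in item (3): after the $\delta$-constraints are imposed one is left with characters $\chi(v)$, $\chi(w)$ precomposed with compositions of maps of the form $\psi^{\gamma_2}_{?,?}$, $\psi^{\gamma_3}_{?,?}$, $\alpha_{?}$, and $\psi^{\gamma_1}_{r,r}$, and these compositions are not equal to the maps appearing in $\chi(u\underset{r}{\ot}w)$ on the nose --- they differ by an inner automorphism $\Ad(h)$ of the relevant stabilizer group, exactly as in the proof of Theorem \ref{IrrBC}(3) where $h = \mu(r)\gamma(r)^{-1}\mu(\gamma)^{-1}$ appeared. I would handle this the same way: exhibit the correcting elements explicitly using the matched-pair relations \eqref{EqMatched} and the identity $\psi^\gamma_{r,s}(g) = \gamma(r)g\gamma(s)^{-1}$, check they lie in the correct stabilizer by a short $\beta$-orbit computation, and then invoke centrality of characters of finite-dimensional unitary representations to discard them. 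Once these inner automorphisms are absorbed, the integrand matches term-by-term and the identification with $\dim(\morph_{G_r}(u\circ\psi^{\gamma_1}_{r,r}, v\underset{r}{\ot}w))$ is a direct reading of the orthogonality relations on $G_r$, completing the proof.
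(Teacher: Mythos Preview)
Your proposal is correct and takes essentially the same route as the paper: direct verification of the representation property and unitarity for (1), a diagonal trace for (2), and the Haar-state character pairing for (3).

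One remark on item (3): the ``reindexing/centrality'' obstacle you anticipate does not actually materialize. In the paper's computation (which places the adjoint on $\chi(\gamma_1(u))$ rather than on the product $\chi(\gamma_2(v))\chi(\gamma_3(w))$), once the crossed-product relations and the formula $h(u_\gamma F)=\delta_{\gamma,e}\int_G F\,d\nu$ impose the constraint $r=st$, the surviving integrand over $G_r$ is \emph{exactly} $\chi(\overline{u})\circ\psi^{\gamma_1}_{r,r}$ times the character formula for $v\underset{r}{\ot}w$ from item (2) --- no $\Ad(h)$ correction appears. The twisted tensor product is defined precisely so that its character absorbs the $\alpha_t$-twist that the crossed-product commutation produces. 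The inner-automorphism step was genuinely needed in Theorem~\ref{IrrBC}(3) only because two \emph{different} sections $\gamma(\cdot)$ and $\mu(\cdot)$ of the same orbit were being compared; here each orbit contributes a single section, so there is no mismatch to correct. If you carry out your computation you will simply find the correcting element is trivial.
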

Let us observe that, by the bicrossed product relations, we have, for
all $\gamma_1,\gamma_2,\gamma_3\in\Gamma$,
$$\gamma_1\cdot G\cap(\gamma_2\cdot G)(\gamma_3\cdot G)\neq\emptyset\Leftrightarrow\gamma_1\cdot G\subset(\gamma_2\cdot G)(\gamma_3\cdot G).$$
\begin{proof}
$(1)$. Put $w=u\underset{r}{\ot} v$ and let $g,h\in G_r$. Then, $w(gh)$ is equal to:
$$\sum_{s,s'\in\gamma\cdot G,t,t'\in\mu\cdot G,st=s't'=r}e_{ss'}\ot e_{tt'}\ot v_{ss'}(\alpha_t(gh))v_{tt'}(gh)u(\psi^\gamma_{s,s'}(\alpha_t(gh)))\ot v(\psi^\mu_{t,t'}(gh)).$$
Since $v_{ty}(g)\ne0$ precisely when $t\cdot g=y$, the factor
$v_{ss'}(\alpha_t(gh))v_{tt'}(gh)u(\psi^\gamma_{s,s'}(\alpha_t(gh)))\ot
v(\psi^\mu_{t,t'}(gh))$ is equal to:
$$\sum_{x\in\gamma\cdot G,y\in\mu\cdot G}v_{sx}(\alpha_t(g))v_{xs'}(\alpha_{t\cdot g}(h))v_{ty}(g)v_{yt'}(h)u(\psi^\gamma_{s,x}(\alpha_t(g))u(\psi^\gamma_{x,s'}(\alpha_{t\cdot g}(h)))\ot v(\psi^\mu_{t,y}(g))v(\psi^\mu_{y,t'}(h))$$
$$=\sum_{x\in\gamma\cdot G,y\in\mu\cdot G}v_{sx}(\alpha_t(g))v_{xs'}(\alpha_{y}(h))v_{ty}(g)v_{yt'}(h)u(\psi^\gamma_{s,x}(\alpha_t(g))u(\psi^\gamma_{x,s'}(\alpha_{y}(h)))\ot v(\psi^\mu_{t,y}(g))v(\psi^\mu_{y,t'}(h)).$$
Moreover, since for all $g\in G_r$ and all $s,t$ such that $st=r$, one
has, whenever $t\cdot g=y$ and $s\cdot \alpha_t(g)=x$, that
$xy=(s\cdot\alpha_t(g))(t\cdot g)=(st)\cdot g=r\cdot g=r$, it follows that the only
non-zero terms in the last sum are for $x\in\gamma\cdot G$ and
$y\in\mu\cdot G$ such that $xy=r$. By the properties of the matrix units we
see immediately that $w(gh)=w(g)w(h)$. To end the proof of $(1)$, it
suffices to check that $w(1)=1$, which is clear, and that
$w(g)^*=w(g^{-1})$ for all $g\in G_r$.  So let $g\in G_r$. One has:
$$w(g)^*=\sum_{s,s'\in\gamma\cdot G,\,t,t'\in\mu\cdot G,\,st=r=s't'}e_{ss'}\ot e_{tt'}\ot v_{s's}(\alpha_{t'}(g))v_{t't}(g)u((\psi^\gamma_{s',s}(\alpha_{t'}(g)))^{-1})\ot v((\psi^\mu_{t',t}(g))^{-1}).$$
Note that for all $t,t'\in\Gamma$ and all $g\in G$, one has $v_{s's}(g)=v_{ss'}(g^{-1})$. Also, using the bicrossed product relations one finds that $\alpha_r(g)^{-1}=\alpha_{r\cdot g}(g^{-1})$ for all $r\in\Gamma$ and $g\in G$. In particular, $v_{s's}(\alpha_{t'}(g))v_{t't}(g)=v_{ss'}(\alpha_t(g^{-1}))v_{tt'}(g^{-1})$ and, when $t'\cdot g=t$, one has $\psi^\gamma_{s',s}(\alpha_{t'}(g)))^{-1}=\psi^{\gamma}_{s,s'}(\alpha_{t}(g^{-1}))$. It follows immediately that $w(g)^*=w(g^{-1})$.

$(2)$. Is a direct computation.

$(3)$. One has  $\dim(\morph_{\mathbb{G}}(\gamma_1(u),\gamma_2(v)\ot\gamma_3(w)))=h(\chi(\gamma_1(u))^*\chi(\gamma_2(v))\chi(\gamma_3(w)))$ which is equal to:

\begin{eqnarray*}
&&\sum_{r\in\gamma_1\cdot G,s\in\gamma_2\cdot G,t\in\gamma_3\cdot G}h(\chi(\overline{u})\circ\psi^{\gamma_1}_{r,r}v_{rr}u_r^*u_sv_{ss}\chi(v)\circ\psi^{\gamma_2}_{s,s}u_tv_{tt}\chi(w)\circ\psi^{\gamma_3}_{t,t})\\
&&=\sum_{r,s,t}h(u_{r^{-1}st}\alpha_{t^{-1}s^{-1}r}(\chi(\overline{u})\circ\psi^{\gamma_1}_{r,r}v_{rr})\alpha_{t^{-1}}(v_{ss}\chi(v)\circ\psi^{\gamma_2}_{s,s})v_{tt}\chi(w)\circ\psi^{\gamma_3}_{t,t})\\
&&=\sum_{r\in\gamma_1\cdot G}\sum_{s\in\gamma_2\cdot G,t\in\gamma_3\cdot G,st=r}\int_G\chi(\overline{u})\circ\psi^{\gamma_1}_{r,r}v_{rr}\alpha_{t^{-1}}(v_{ss}\chi(v)\circ\psi^{\gamma_2}_{s,s})v_{tt}\chi(w)\circ\psi^{\gamma_3}_{t,t}d\nu\\
&&=\sum_{r\in\gamma_1\cdot G\cap(\gamma_2\cdot G)(\gamma_3\cdot G)}\frac{1}{\vert r\cdot G\vert}\int_{G_r}\chi(\overline{u})\circ\psi^{\gamma_1}_{r,r}\chi(v\underset{r}{\ot}w)d\nu_r\\
&&=\frac{1}{\vert\gamma_1\cdot G\vert}\sum_{r\in\gamma_1\cdot G\cap(\gamma_2\cdot G)(\gamma_3\cdot G)}\dim(\morph_{G_r}(u\circ\psi^{\gamma_1}_{r,r},v\underset{r}{\ot}w)).\\
\end{eqnarray*}

Note that, whenever $\gamma_1\cdot G\cap((\gamma_2\cdot G)( \gamma_3\cdot G))=\emptyset$, there is no non-zero terms in the sum above.
\end{proof}

\subsection{The induced representation}
In this section, we explain how the induced representation maybe
viewed as a particular twisted tensor product.

For $\gamma\in\Gamma$ and $u\,:\,G_\gamma\rightarrow\mathcal{U}(H)$ is a unitary representation
of $G_\gamma$ we define the induced representation:
$${\rm Ind}_\gamma^G(u):=\varepsilon_{G_{\gamma^{-1}}}\underset{1}{\ot} u\,:\,G\rightarrow\mathcal{U}(l^2(\gamma\cdot G)\ot H);\,\,g\mapsto\sum_{r,s\in\gamma\cdot G}e_{rs}\ot v_{rs}(g)u(\psi^\gamma_{rs}(g)).$$
It follows from Theorem \ref{ThmFusion} that ${\rm Ind}_\gamma^G(u)$ is
indeed a unitary representation of $G$. We collect some elementary and
well known facts about this representation in the following
Proposition. Note that, in property $3$, we use the symbol
${\rm Res}^G_{G_\gamma}(u)$ for $u\in {\rm Rep}(G)$ to denote the restriction
of $u$ to a representation of $G_\gamma$. Hence, property $3$ motivates the
name induced representation for the representation ${\rm Ind}_\gamma^G(u)$.

\begin{proposition}\label{PropInduced}
The following holds.
\begin{enumerate}
\item For all $\gamma\in\Gamma$ and all $u\in{\rm Rep}(G_\gamma)$ one has $\chi({\rm Ind}_\gamma^G(u))(g)=\sum_{r\in\gamma\cdot G}v_{rr}(g)\chi(u)(\psi_{rr}^\gamma(g))$ for all $g\in G$.
\item For all $\gamma\in\Gamma$ and all $u,v\in{\rm Rep}(G_\gamma)$ one has $u\simeq v\implies{\rm Ind}_\gamma^G(u)\simeq{\rm Ind}_\gamma^G(v)$.
\item For all $\gamma\in\Gamma$, $u\in{\rm Rep}(G)$ and $v\in{\rm Rep}(G_\gamma)$ one has ${\rm dim}({\rm Mor}_G(u,{\rm Ind}_\gamma^G(v)))={\rm dim}({\rm Mor}_{G_\gamma}({\rm Res}^G_{G_\gamma}(u),v)$.
\end{enumerate}
\end{proposition}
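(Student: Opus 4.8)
The plan is to prove the three statements about $\Ind_\gamma^G(u)$ by exploiting the identification $\Ind_\gamma^G(u)=\varepsilon_{G_{\gamma^{-1}}}\underset{1}{\ot}u$ together with the character formula from Theorem~\ref{ThmFusion}(2) and the standard Peter--Weyl machinery on the compact group $G$. Throughout, the key device is that for finite-dimensional unitary representations of a compact group, $\dim(\morph_G(a,b))=h_G(\chi(a)^*\chi(b))=\int_G\overline{\chi(a)}\chi(b)\,d\nu$, and that $\nu(G_\gamma)=\frac{1}{\vert\gamma\cdot G\vert}$, so $d\nu_\gamma=\vert\gamma\cdot G\vert\,d\nu$ on $G_\gamma$.

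\smallskip
For $(1)$, I would simply specialize the character formula of Theorem~\ref{ThmFusion}(2) to the case $\mu=\gamma^{-1}$, $v=\varepsilon_{G_{\gamma^{-1}}}$, $r=1$, noting that $1\cdot G=\{1\}$ and $G_1=G$, so the twisted tensor product $\varepsilon_{G_{\gamma^{-1}}}\underset{1}{\ot}u$ is a representation of $G$ itself. In that formula the sum over pairs $(s,t)$ with $st=1$, $s\in\gamma\cdot G$, $t\in\gamma^{-1}\cdot G=(\gamma\cdot G)^{-1}$ (using $(4)$ of Theorem~\ref{IrrBC}) collapses: $t=s^{-1}$. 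Since $\varepsilon$ has character $\equiv 1$ and $\psi^{\gamma^{-1}}_{s^{-1},s^{-1}}$-type factors only contribute through characteristic functions, the formula reduces after reindexing $r:=s$ to $\chi(\Ind_\gamma^G(u))(g)=\sum_{r\in\gamma\cdot G}v_{rr}(g)\chi(u)(\psi^\gamma_{r,r}(g))$. One must check the $\alpha$-twist $\alpha_t$ appearing in the general character formula acts trivially here because $v$ is the trivial representation; this is the only place any care is needed, and it is immediate. Alternatively, $(1)$ follows by taking $({\rm Tr}\ot\id)$ directly of the explicit formula $g\mapsto\sum_{r,s}e_{rs}\ot v_{rs}(g)u(\psi^\gamma_{rs}(g))$ given in the definition, picking out the diagonal $r=s$ terms.

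\smallskip
For $(2)$: if $u\simeq v$ via a unitary intertwiner $T\in\mathcal U(H_u,H_v)$ with $Tu(g)=v(g)T$ for all $g\in G_\gamma$, then $\id_{l^2(\gamma\cdot G)}\ot T$ intertwines $\Ind_\gamma^G(u)$ and $\Ind_\gamma^G(v)$, because in the explicit formula $\psi^\gamma_{r,s}(g)\in G_\gamma$ exactly on the support of $v_{rs}$, so $(\id\ot T)\,v_{rs}(g)u(\psi^\gamma_{rs}(g))=v_{rs}(g)v(\psi^\gamma_{rs}(g))\,(\id\ot T)$ term by term. This is a one-line verification. (One could instead invoke Theorem~\ref{ThmFusion} monoidality of the twisted tensor product in each variable, but the direct check is shorter.)

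\smallskip
For $(3)$, which is the substantive point (Frobenius reciprocity), I would compute both Hom-dimensions as integrals of characters and match them. By Peter--Weyl on $G$ and part $(1)$,
\[
\dim(\morph_G(u,\Ind_\gamma^G(v)))=\int_G\overline{\chi(u)(g)}\,\Bigl(\sum_{r\in\gamma\cdot G}v_{rr}(g)\chi(v)(\psi^\gamma_{r,r}(g))\Bigr)\,d\nu(g).
\]
Since $v_{rr}=1_{G_r}$, the $r$-th summand is an integral over $G_r$; using that $\psi^\gamma_{r,r}\colon G_r\to G_\gamma$ is a measure-preserving group isomorphism and that $\chi(u)=\chi({\rm Res}^G_{G_\gamma}(u))$ restricted to $G_r$ equals $\chi({\rm Res}^G_{G_\gamma}(u))\circ(\psi^\gamma_{r,r})^{-1}\circ\psi^\gamma_{r,r}$ — together with centrality of $\chi(u)$ on $G$, which makes $\chi(u)\circ(\psi^\gamma_{r,r})^{-1}=\chi(u)$ up to the conjugation built into $\psi^\gamma_{r,r}$ exactly as in the proof of Theorem~\ref{IrrBC}(3) — each of the $\vert\gamma\cdot G\vert$ summands equals $\nu(G_\gamma)\int_{G_\gamma}\overline{\chi({\rm Res}^G_{G_\gamma}(u))}\,\chi(v)\,d\nu_\gamma=\frac{1}{\vert\gamma\cdot G\vert}\dim(\morph_{G_\gamma}({\rm Res}^G_{G_\gamma}(u),v))$. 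Summing over the $\vert\gamma\cdot G\vert$ values of $r$ gives the claim. The main obstacle is bookkeeping the change of variables $\psi^\gamma_{r,r}$ and invoking centrality of $\chi(u)$ on the big group $G$ to kill the resulting inner automorphism — this is precisely the mechanism already used in Theorem~\ref{IrrBC}(3), so I would cite that argument rather than repeat it; everything else is routine Peter--Weyl.
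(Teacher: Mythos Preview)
Your proposal is correct and, for parts $(1)$ (via your alternative direct-trace argument) and $(3)$, essentially identical to the paper's proof. Two small remarks are worth making.

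For $(1)$, your first approach via Theorem~\ref{ThmFusion}(2) has the factors reversed: the paper defines $\Ind_\gamma^G(u)=\varepsilon_{G_{\gamma^{-1}}}\underset{1}{\ot}u$, with $\varepsilon$ in the \emph{first} slot, whereas setting $\mu=\gamma^{-1}$, $v=\varepsilon_{G_{\gamma^{-1}}}$ in the theorem computes $u\underset{1}{\ot}\varepsilon_{G_{\gamma^{-1}}}$. In that order the $\alpha_t$-twist hits $\chi(u)$, not $\chi(\varepsilon)$, so the claim ``$\alpha_t$ acts trivially because $v$ is trivial'' fails as stated. (With the correct order one still has to dispose of the factor $v_{ss}\circ\alpha_t$, which equals $v_{tt}$ on $G_t$ by the relation $(t\cdot g)^{-1}=t^{-1}\cdot\alpha_t(g)$.) None of this matters, since your alternative --- taking $({\rm Tr}\ot\id)$ of the explicit formula and reading off the diagonal --- is exactly the paper's one-line ``obvious by definition''.

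For $(2)$ you give a genuinely different (and arguably more direct) argument: you exhibit the explicit intertwiner $\id_{l^2(\gamma\cdot G)}\ot T$, whereas the paper deduces $(2)$ from $(1)$ via equality of characters. Both are one-liners; yours has the small advantage of producing the intertwiner and not relying on the Peter--Weyl fact that equal characters imply equivalence, while the paper's version avoids even writing down $T$.
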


\begin{proof}
$(1)$. It is obvious, by definition of ${\rm Ind}_\gamma^G(u)$.

$(2)$. If $u\simeq v$ then $\chi(u)=\chi(v)$. Hence,  $\chi({\rm Ind}_\gamma^G(u))=\chi({\rm Ind}_\gamma^G(v))$ by $(1)$. So ${\rm Ind}_\gamma^G(u)\simeq{\rm Ind}_\gamma^G(v)$.




$(3)$. Let $\gamma\in\Gamma$, $u\in{\rm Rep}(G)$ and $v\in{\rm Rep}(G_\gamma)$. One has,
$$
{\rm dim}({\rm Mor}_G(u,{\rm Ind}_\gamma^G(v)))=
\sum_{r\in\gamma\cdot G}\int_G\chi(\overline{u})v_{rr}\chi(v)\circ\psi^\gamma_{rr}d\nu
=\frac{1}{\vert\gamma\cdot G\vert}\sum_{r\in\gamma\cdot G}\int_{G_r}\chi(\overline{u})\chi(v)\circ\psi^\gamma_{rr}d\nu_\gamma.
$$
Since $\psi^\gamma_{rr}\,:\, G_r\rightarrow G_\gamma$ is a Haar probability preserving
homeomorphism we obtain
$${\rm dim}({\rm Mor}_G(u,{\rm Ind}_\gamma^G(v)))=\frac{1}{\vert\gamma\cdot G\vert}\sum_{r\in\gamma\cdot G}\int_{G_\gamma}\chi(\overline{u})\circ(\psi^\gamma_{rr})^{-1}\chi(v)d\nu_\gamma.$$
Finally, since, for all $g\in G$,
$\chi(\overline{u})\circ(\psi^\gamma_{rr})^{-1}(g)=\chi(\overline{u})(g)$ (because
$\chi(\overline{u})$ is a central function on $G$) it follows that:
\begin{displaymath}
  {\rm dim}({\rm Mor}_G(u,{\rm Ind}_\gamma^G(v)))=\frac{1}{\vert\gamma\cdot G\vert}\sum_{r\in\gamma\cdot
    G}\int_{G_\gamma}\chi(\overline{u})\chi(v)d\nu_\gamma={\rm dim}({\rm Mor}_{G_\gamma}({\rm
    Res}^G_{G_\gamma}(u),v). \qedhere
\end{displaymath}
\end{proof}

\section{Length functions}

Recall that given a compact quantum group $\mathbb{H}$, a function
$l:\text{Irr}(\mathbb{H})\rightarrow [0,\infty)$ is called a \textit{length function
  on} ${\rm Irr}(\mathbb{H})$ if $l([\epsilon])=0$,
$l(\overline{x})=l(x)$ and that $l(x)\leq l(y)+l(z)$ whenever
$x\subset y\otimes z$. A length function on a discrete group $\Lambda$ is a function
$l:\Lambda\rightarrow [0,\infty)$ such that $l(1)=0$, $l(r)=l(r^{-1})$ and
$l(rs)\leq l(r) + l(s)$ for all $r,s \in \Lambda$.

Let $(\Gamma, G)$ be a matched pair with bicrossed product
$\mathbb{G}$. In view of the description of the irreducible
representations of $\mathbb{G}$, the fusion rules and the
contragredient representation, it is clear that to get a length
function on ${\rm Irr}(\mathbb{G})$, we need a family of maps
$l_\gamma\,:\,\irr(G_\gamma)\rightarrow[0,+\infty[$, for
$\gamma\in\Gamma$, satisfying the hypothesis of the following definition.

\begin{definition}\label{matchedlength}
  Let $(\Gamma,G)$ be a matched pair,
  $l\,:\,{\rm Irr}(G)\rightarrow[0,+\infty[$ and
  $l_\Gamma\,:\,\Gamma\rightarrow[0,+\infty[$ be length functions. The pair
  $(l,l_\Gamma)$ is \textit{matched} if, for all $\gamma\in\Gamma$, there exists a
  function $l_\gamma\,:\,{\rm Irr}(G_\gamma)\rightarrow[0,+\infty[$ such that
\begin{enumerate}[(i)]
\item $l_{1}=l$ and $l_\gamma(\varepsilon_{G_\gamma})=l_\Gamma(\gamma)$.
\item For any \( \gamma \in \Gamma \), \( r \in \gamma \cdot G \), and
  \( x \in \irr(G_{\gamma}) \), we have
  $l_{\gamma}(x) = l_{r}([u^{x} \circ \psi^{\gamma}_{r,r}])$.
  \item For any \( \gamma \in \Gamma \), \( x \in \irr(G_{\gamma}) \), we have
    $l_{\gamma}(x) = l_{\gamma^{-1}}([\overline{u^{x}} \circ \alpha_{\gamma^{-1}}])$.
  \item For any \( \gamma_{1},\gamma_{2},\gamma_{3} \in \Gamma \),
    \( x \in \irr(G_{\gamma_{1}}) \),
    \( y \in \irr(G_{\gamma_{2}}) \),
    \( z \in \irr(G_{\gamma_{3}}) \), if
    \( \gamma_{3} \in (\gamma_{1}\cdot G)(\gamma_{2} \cdot G) \), and
    \begin{equation}
      \dim\morph_{G_{r}}(u^{z} \circ \psi^{\gamma_{3}}_{r,r}, u^{x} \otimes_{r} u^{y})
      \neq 0
    \end{equation}
    for some \( r \in \gamma_{3} \cdot G \), then
    \begin{equation}    
      l_{\gamma_{3}}(z) \leq l_{\gamma_{1}}(x) + l_{\gamma_{2}}(y).
    \end{equation}
  \end{enumerate}
\end{definition}

The next Proposition shows that our notion of matched pair for length
functions is the good one, as expected.

\begin{proposition}\label{Proplength}
  Let $(\Gamma, G)$ be a matched pair with bicrossed product $\mathbb{G}$.
\begin{enumerate}
\item If $l$ is a length function on $\irr(\mathbb{G})$ then the maps
  $l_G\,:\,\irr(G)=\irr(G_1)\rightarrow[0,+\infty[$,
  $x\mapsto l([1(x)])$ and
  $l_\Gamma\,:\,\Gamma\rightarrow[0,+\infty[$,
  $\gamma\mapsto l([\gamma(\varepsilon_{G_\gamma})])$ are length functions and the pair
  $(l_\Gamma, l_G)$ is matched.
\item If $l_\Gamma$ is any $\beta$-invariant length function on
  $\Gamma$ then the map $l'\,:\,\irr(\mathbb{G})\mapsto [0,+\infty[$,
  $[\gamma(u^x)]\mapsto l_\Gamma(\gamma)$ is a well defined length function on
  $\irr(\mathbb{G})$.
\item If $(l_\Gamma,l_G)$ is a matched pair of length functions on
  $(\Gamma,\irr(G))$ then $l_\Gamma$ is $\beta$-invariant and the maps
  $l,\widetilde{l}\,:\,\irr(\mathbb{G})\rightarrow[0,+\infty[$,
  $l([\gamma(u^x)]):= l_\gamma(x)$ and
  $\widetilde{l}([\gamma(u^x)]):= l_\gamma(x)+l_\Gamma(\gamma)$ are well-defined length
  functions.
\end{enumerate}
\end{proposition}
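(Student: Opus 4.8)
The plan is to treat the three parts in the order stated, since part $(1)$ isolates the "necessity" direction (a length function on $\irr(\Gq)$ produces a matched pair), part $(2)$ is an easy construction, and part $(3)$ is the substantive "sufficiency" converse, which is where the real work lies. Throughout I will use the classification of $\irr(\Gq)$, the fusion rules, and the description of the contragredient from Theorem \ref{IrrBC} and Theorem \ref{ThmFusion}, together with Definition \ref{matchedlength}.

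For part $(1)$: given a length function $l$ on $\irr(\Gq)$, define $l_\gamma\,:\,\irr(G_\gamma)\to[0,+\infty[$ by $l_\gamma(x):=l([\gamma(u^x)])$. I would first check this is well-defined and that for $\gamma=1$ it recovers $l_G$, and that $l_\gamma(\varepsilon_{G_\gamma})=l_\Gamma(\gamma)$ by definition of $l_\Gamma$; this gives (i). For (ii), Theorem \ref{IrrBC}(3) with $\mu\cdot G=\gamma\cdot G$ shows that $\gamma(u^x)\simeq r(u^x\circ(\psi^{r}_{\gamma,\gamma})^{-1})$ — more precisely one matches the equivalence class $[\gamma(u^x)]$ with $[r(v)]$ where $u^v=u^x\circ\psi^\gamma_{r,r}$ (after identifying $\gamma\cdot G=r\cdot G$), hence $l_\gamma(x)=l([\gamma(u^x)])=l([r(u^x\circ\psi^\gamma_{r,r})])=l_r([u^x\circ\psi^\gamma_{r,r}])$. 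For (iii), Theorem \ref{IrrBC}(4) gives $\overline{\gamma(u^x)}\simeq\gamma^{-1}(\overline{u^x}\circ\alpha_{\gamma^{-1}})$, and since $l(\overline{y})=l(y)$ we get $l_\gamma(x)=l_{\gamma^{-1}}([\overline{u^x}\circ\alpha_{\gamma^{-1}}])$. For (iv), suppose $u^z\circ\psi^{\gamma_3}_{r,r}\subset u^x\otimes_r u^y$ for some $r\in\gamma_3\cdot G$; by Theorem \ref{ThmFusion}(3) the multiplicity $\dim\morph_\Gq(\gamma_3(u^z),\gamma_1(u^x)\ot\gamma_2(u^y))$ is a positive sum of such terms, hence nonzero, so $\gamma_3(u^z)\subset\gamma_1(u^x)\ot\gamma_2(u^y)$ and subadditivity of $l$ gives $l_{\gamma_3}(z)\le l_{\gamma_1}(x)+l_{\gamma_2}(y)$. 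Then $l_G$ and $l_\Gamma$ are length functions: $l_\Gamma(\gamma^{-1})=l_\Gamma(\gamma)$ follows from (iii) applied to $\varepsilon_{G_\gamma}$ (note $\overline{\varepsilon}\circ\alpha_{\gamma^{-1}}=\varepsilon_{G_{\gamma^{-1}}}$), and subadditivity $l_\Gamma(\gamma_1\gamma_2)\le l_\Gamma(\gamma_1)+l_\Gamma(\gamma_2)$ follows from (iv) with $x=\varepsilon_{G_{\gamma_1}}$, $y=\varepsilon_{G_{\gamma_2}}$, $z=\varepsilon_{G_{\gamma_1\gamma_2}}$ and $\gamma_3=\gamma_1\gamma_2$, once one observes $\varepsilon_{G_{\gamma_3}}\circ\psi^{\gamma_3}_{r,r}=\varepsilon_{G_r}\subset \varepsilon_{G_{\gamma_1}}\otimes_r\varepsilon_{G_{\gamma_2}}$ holds for a suitable $r$; similarly for $l_G$ using part $(3)$'s fusion inside $G$, or directly from (iv) restricted to $\gamma_i=1$.

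For part $(2)$: if $l_\Gamma$ is $\beta$-invariant, the map $l'([\gamma(u^x)]):=l_\Gamma(\gamma)$ is well-defined because, by Theorem \ref{IrrBC}(5), two irreducibles $\gamma(u^x)$ and $\mu(u^y)$ are equivalent only if $\gamma\cdot G=\mu\cdot G$, and $\beta$-invariance of $l_\Gamma$ means $l_\Gamma$ is constant on $\beta$-orbits, so $l_\Gamma(\gamma)=l_\Gamma(\mu)$. That $l'$ is a length function: $l'([\varepsilon])=l_\Gamma(1)=0$; $l'(\overline{\gamma(u^x)})=l_\Gamma(\gamma^{-1})=l_\Gamma(\gamma)$ by the $\beta$-invariance (since $(\gamma\cdot G)^{-1}=\gamma^{-1}\cdot G$, the orbit of $\gamma^{-1}$ has the same $l_\Gamma$-value as... ) — I should double check that $\beta$-invariance plus $l_\Gamma(r)=l_\Gamma(r^{-1})$ indeed gives $l_\Gamma$ constant on the set $\gamma^{-1}\cdot G$ with the value $l_\Gamma(\gamma)$; and subadditivity follows from Theorem \ref{ThmFusion}(3): $\gamma_1(u^z)\subset\gamma_2(u^x)\ot\gamma_3(u^y)$ forces $\gamma_1\cdot G\subset(\gamma_2\cdot G)(\gamma_3\cdot G)$, so $\gamma_1=s t\cdot(\text{something})$ with $s\in\gamma_2\cdot G$, $t\in\gamma_3\cdot G$, hence $l_\Gamma(\gamma_1)=l_\Gamma(st)\le l_\Gamma(s)+l_\Gamma(t)=l_\Gamma(\gamma_2)+l_\Gamma(\gamma_3)$ using $\beta$-invariance to replace $\gamma_2,\gamma_3$ by $s,t$.

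For part $(3)$, the main obstacle: given a matched pair $(l_\Gamma,l_G)$ with witnessing family $(l_\gamma)_{\gamma\in\Gamma}$, I must show $l_\Gamma$ is $\beta$-invariant and that $l([\gamma(u^x)]):=l_\gamma(x)$ and $\widetilde l([\gamma(u^x)]):=l_\gamma(x)+l_\Gamma(\gamma)$ are well-defined length functions. \emph{Well-definedness} is the delicate point: by Theorem \ref{IrrBC}(5) and (3), $\gamma(u^x)\simeq\mu(u^y)$ iff $\gamma\cdot G=\mu\cdot G$ and $u^x\simeq u^y\circ\psi^\mu_{\gamma,\gamma}$; writing $\mu=r\in\gamma\cdot G$ (so $\gamma\in\mu\cdot G=r\cdot G$), axiom (ii) applied with the roles arranged so that $l_\gamma(x)=l_r([u^x\circ\psi^\gamma_{r,r}])$ — combined with the identity $u^y\circ\psi^\mu_{\gamma,\gamma}\simeq u^x$ and the cocycle/composition relations among the $\psi$'s — should yield $l_\gamma(x)=l_\mu(y)$; this is the step I expect to require the most care, chasing the $\psi^\gamma_{r,r}$-composition identities (analogous to the $\mathrm{Ad}(h)$ computation in the proof of Theorem \ref{IrrBC}(3)) to see that the two descriptions of the same class give the same $l$-value. \emph{$\beta$-invariance of $l_\Gamma$}: for $g\in G$, $\gamma\cdot g$ lies in $\gamma\cdot G$, and $\gamma(\varepsilon_{G_\gamma})\simeq (\gamma\cdot g)(\varepsilon_{G_{\gamma\cdot g}}\circ\psi)$, so well-definedness (or directly axiom (ii) with $x=\varepsilon_{G_\gamma}$, using $\varepsilon_{G_\gamma}\circ\psi^\gamma_{r,r}=\varepsilon_{G_r}$) gives $l_\Gamma(\gamma)=l_\gamma(\varepsilon_{G_\gamma})=l_r(\varepsilon_{G_r})=l_\Gamma(r)$ for all $r\in\gamma\cdot G$. \emph{Length function axioms for $l$}: $l([\varepsilon])=l_1(\varepsilon_{G_1})=l(\varepsilon_G)=0$; the contragredient symmetry is exactly axiom (iii) together with Theorem \ref{IrrBC}(4); and subadditivity — if $\gamma_1(u^z)\subset\gamma_2(u^x)\ot\gamma_3(u^y)$ then by Theorem \ref{ThmFusion}(3) the multiplicity is a positive sum over $r\in\gamma_1\cdot G\cap(\gamma_2\cdot G)(\gamma_3\cdot G)$ of $\dim\morph_{G_r}(u^z\circ\psi^{\gamma_1}_{r,r},u^x\otimes_r u^y)$, so some term is nonzero, and axiom (iv) gives $l_{\gamma_1}(z)\le l_{\gamma_2}(x)+l_{\gamma_3}(y)$, i.e. $l(\gamma_1(u^z))\le l(\gamma_2(u^x))+l(\gamma_3(u^y))$. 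Finally $\widetilde l$ is a length function because it is the sum of $l$ and $l'\circ(\text{from part }(2)$, which applies since $l_\Gamma$ was just shown $\beta$-invariant), and a sum of length functions is a length function.
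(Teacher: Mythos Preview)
Your approach is essentially the same as the paper's, and the logical structure of all three parts is correct. There is one genuine gap in part $(1)$, however.

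For the subadditivity of $l_\Gamma$ you write that it follows from axiom (iv) ``once one observes $\varepsilon_{G_r}\subset \varepsilon_{G_{\gamma_1}}\otimes_r\varepsilon_{G_{\gamma_2}}$ holds for a suitable $r$''. This containment is not automatic and is the only nontrivial computation in the whole proposition. The paper establishes it directly (with $r=\gamma\mu$, $\gamma_1=\gamma$, $\gamma_2=\mu$) by computing, via the character formula of Theorem \ref{ThmFusion}(2),
\[
\dim\bigl(\morph_{G_{\gamma\mu}}(\varepsilon_{G_{\gamma\mu}},\,\varepsilon_{G_\gamma}\underset{\gamma\mu}{\otimes}\varepsilon_{G_\mu})\bigr)
=\vert\gamma\mu\cdot G\vert\sum_{\substack{s\in\gamma\cdot G,\,t\in\mu\cdot G\\ st=\gamma\mu}}\nu\bigl(\alpha_{t^{-1}}(G_s)\cap G_t\cap G_{\gamma\mu}\bigr)
\;\geq\;\nu\bigl(\alpha_{\mu^{-1}}(G_\gamma)\cap G_\mu\cap G_{\gamma\mu}\bigr),
\]
and observing that the set $\alpha_{\mu^{-1}}(G_\gamma)\cap G_\mu\cap G_{\gamma\mu}$ is open and contains $1$, hence has positive Haar measure. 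Without this step your derivation of $l_\Gamma(\gamma\mu)\le l_\Gamma(\gamma)+l_\Gamma(\mu)$ is incomplete.

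Two minor remarks. In part $(2)$, the identity $l'(\overline{z})=l'(z)$ needs only $l_\Gamma(\gamma^{-1})=l_\Gamma(\gamma)$ (a length-function axiom), not $\beta$-invariance; your hesitation there is unnecessary. In part $(3)$, the well-definedness check you anticipate does reduce to the observation that $\psi^r_{\gamma,\gamma}\circ\psi^\gamma_{r,r}={\rm Ad}(r(\gamma)\gamma(r))$ with $r(\gamma)\gamma(r)\in G_r$ (since $r\cdot r(\gamma)\gamma(r)=\gamma\cdot\gamma(r)=r$), so that composing with it gives an equivalent representation; this confirms your expectation and matches the ``${\rm Ad}(h)$'' computation you reference from Theorem \ref{IrrBC}(3). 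The paper itself treats this point briefly.
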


\begin{proof}
  $(1)$. Since $1(\varepsilon_G)$ is the trivial representation of
  $\mathbb{G}$ one has $l_\Gamma(1)=0$. Let $\gamma,\mu\in\Gamma$ and note that
  $\gamma\mu\in(\gamma\cdot G)(\mu\cdot G)$. Moreover,
\begin{eqnarray*}
  \dim(\morph(\varepsilon_{G_{\gamma\mu}},\varepsilon_{G_{\gamma}}\underset{\gamma\mu}{\ot}\varepsilon_{G_{\mu}}))&=&
                                                                   \int_{G_{\gamma\mu}}\chi(\varepsilon_{G_{\gamma}}\underset{\gamma\mu}{\ot}\varepsilon_{G_{\mu}})d\nu_{G_{\gamma\mu}}
                                                                   =\vert\gamma\mu\cdot G\vert\sum_{s\in\gamma\cdot G,t\in\mu\cdot G,st=\gamma\mu}\int_{G_{\gamma\mu}}(v_{ss}\circ\alpha_t)v_{tt}d\nu\\
                                                               &=&\vert\gamma\mu\cdot G\vert\sum_{s\in\gamma\cdot G,t\in\mu\cdot G,st=\gamma\mu}\nu(\alpha_{t^{-1}}(G_s)\cap G_t\cap G_{\gamma\mu})\\
                                                               &\geq&\nu(\alpha_{\mu^{-1}}(G_\gamma)\cap G_\mu\cap G_{\gamma\mu}).
\end{eqnarray*}
Hence, since $\alpha_{\mu^{-1}}(G_\gamma)\cap G_\mu\cap G_{\gamma\mu}$ is open and non empty (it
contains $1$) we deduce that
$$\dim(\morph(\varepsilon_{G_{\gamma\mu}},\varepsilon_{G_{\gamma}}\underset{\gamma\mu}{\ot}\varepsilon_{G_{\mu}}))>0.$$
So
$\varepsilon_{G_{\gamma\mu}}\subset \varepsilon_{G_{\gamma}}\underset{\gamma\mu}{\ot}\varepsilon_{G_{\mu}}$ and, by
the fusion rules of $\mathbb{G}$ in Theorem \ref{ThmFusion}, $(\gamma\mu)(\varepsilon_{G_{\gamma\mu}})\subset\gamma(\varepsilon_{G_{\gamma}})\ot\mu(\varepsilon_{G_{\mu}})$.
Hence, since
$l$ is a length function,
$l_\Gamma(\gamma\mu)=l([\gamma\mu(\varepsilon_{G_{\gamma\mu}})])\leq l([\gamma(\varepsilon_{G_{\gamma}})])+
l([\mu(\varepsilon_{G_{\mu}})])=l_\Gamma(\gamma)+l_\Gamma(\mu)$. Finally, note that, by point $4$ of Theorem \ref{IrrBC}, for all
$\gamma\in\Gamma$, one has $\gamma^{-1}(\varepsilon_{G_{\gamma^{-1}}})\simeq\overline{\gamma(\varepsilon_G)}$. Hence,
$$l_\Gamma(\gamma^{-1})=l([\gamma^{-1}(\varepsilon_{G_{\gamma^{-1}})]}=l([\overline{\gamma(\varepsilon_G)}])=l([\gamma(\varepsilon_G)])=l_\Gamma(\gamma).$$
So $l_\Gamma$ is a length function on $\Gamma$. It is obvious that
$l_G$ is a length function on $\irr(G)$. Let us prove that the
pair $(l_\Gamma, l_G)$ is matched. Indeed, defining
$l_\gamma\,:\,\irr(G_\gamma)\rightarrow[0,+\infty[$ by
$l_\gamma(x)=l([\gamma(u^x)])$, point $(i)$ of Definition \ref{matchedlength} is clear while point $(ii)$ follows from point $3$ of Theorem \ref{IrrBC}, since it implies
  \( [\gamma(u^{x})] = [r(u^{x} \circ \psi^{r}_{\gamma, \gamma})] \),
  thus
  \begin{displaymath}
    l_{\gamma}(x) = l( [\gamma(u^{x})] )
    = l( [r(u^{x} \circ \psi^{r}_{\gamma, \gamma})] )
    = l_{r}( [u^{x} \circ \psi^{r}_{\gamma, \gamma}]).
  \end{displaymath}
Next, by point $4$ of Theorem \ref{IrrBC}, we have $\overline{[\gamma(u^{x})]} = [\gamma^{-1}(\overline{u^{x}}) \circ \alpha_{\gamma^{-1}}]$ thus,
  \begin{displaymath}
    l_{\gamma}(x) = l(\overline{[\gamma(u^{x})]})
    = l([\gamma^{-1}(\overline{u^{x}}) \circ \alpha^{-1}])
    = l_{\gamma^{-1}}([\overline{u^{x}} \circ \alpha^{-1}]),
  \end{displaymath}
which proves point $(ii)$ of Definition \ref{matchedlength}. Finally, for point $(iv)$, the fusion rules in Theorem \ref{ThmFusion} imply
  \begin{equation}
    \label{eq:bfb19b457bc2d11f}
    \dim\morph( \gamma_{3}(u^{z}),
    \gamma_{1}(u^{x}) \otimes \gamma_{2}(u^{y}))
    = \frac{1}{\vert\gamma \cdot G\vert} \sum_{r \in \gamma_{3} \cdot G}
    \dim\morph_{G_{r}}(u^{z} \circ \psi^{\gamma_{3}}_{r,r}, u^{x} \otimes_{r} u^{y}).
  \end{equation}
If $\dim\morph_{G_{r}}(u^{z} \circ \psi^{\gamma_{3}}_{r,r}, u^{x} \otimes_{r} u^{y})\neq 0$ for some $r \in \gamma_{3} \cdot G$, then $(\ref{eq:bfb19b457bc2d11f})$ is also nonzero, which means, by irreducibility of $\gamma_{3}(u^{z})$ that  \( [\gamma_{3}(u^{z})] \subseteq [\gamma_{1}(u^{x})] \otimes [\gamma_{2}(u^{y})] \). Hence, since $l$ is a length function on
$\irr(\mathbb{G})$,
  \begin{displaymath}
    l_{\gamma_{3}}(z) = l([\gamma_{3}(u^{z})])
    \leq l([\gamma_{1}(u^{x})]) + l([\gamma_{2}(u^{y})])
    = l_{\gamma_{1}}(x) + l_{\gamma_{2}}(y).
  \end{displaymath}

$(2)$. Since $l_\Gamma$ is $\beta$-invariant, the map $l'$ is well defined by
Theorem \ref{IrrBC}. It is clear that
$l'(\varepsilon_\mathbb{G})=0$ and, by point $4$ (and $5$) of Theorem
\ref{IrrBC} and since $l'$ is a length function we also have that
$l'(z)=l'(\overline{z})$ for all $z\in\irr(\mathbb{G})$. Let now
$\gamma_1,\gamma_2,\gamma_3\in\Gamma$, $x\in\irr(G_{\gamma_1})$,
$y\in\irr(G_{\gamma_2})$ and $z\in\irr(G_{\gamma_3})$ be such that
$\gamma_1(u^x)\subset \gamma_2(u^y)\ot\gamma_3(u^z)$ then, by point
$3$ in Theorem \ref{ThmFusion}, there exists $r\in\gamma_1\cdot G$,
$s\in\gamma_2\cdot G$ and $t\in\gamma_3\cdot G$ such that $r=st$ (and
$u^x\circ\psi^{\gamma_1}_{r,r}\subset u^y\underset{r}{\ot}
u^z$). Then,
$$l'([\gamma_1(u^x)])=l_\Gamma(\gamma_1)=l_\Gamma(r)\leq
l_\Gamma(s)+l_\Gamma(t)=l_\Gamma(\gamma_2)+l_\Gamma(\gamma_3)=l'([\gamma_2(u^y)])+l'([\gamma_3(u^z)]).$$

$(3)$. Let $(l_\Gamma,l_G)$ be a matched pair of length functions. By
points $1$ and $2$ of Definition \ref{matchedlength} we have, for all
$\gamma\in\Gamma$ and all $r\in\gamma\cdot G$,
$l_\Gamma(\gamma)=l_\gamma(\varepsilon_{G_\gamma})=l_r([\varepsilon_{G_\gamma}\circ\psi^\gamma_{r,r}])=l_r(\varepsilon_{G_r})=l_\Gamma(r)$. Hence,
$l_\Gamma$ is $\beta$-invariant. By assertion $(2)$ we just proved above, we get a length function
$l'$ on $\irr(\mathbb{G})$. Now, it is clear from Definition
\ref{matchedlength}, the fusion rules and the adjoint representation
of a bicrossed product (point $3$ of Theorem \ref{ThmFusion} and point
$4$ of Theorem \ref{IrrBC}) that
$l\,:\,[\gamma(u^x)]\mapsto l_\gamma(x)$ is a length function on
$\irr(\mathbb{G})$. Since $\widetilde{l}=l+l'$, $\widetilde{l}$ is
also a length function on $\irr(\mathbb{G})$.
\end{proof}

\section{Rapid decay and polynomial growth}

In this section we study property $(RD)$ and polynomial growth for bicrossed-products.

\subsection{Generalities}

We use the notion of property $(RD)$ developed by Vergnioux in \cite{Ve07} (see also \cite{BVZ14}) and
recall the definition below. Since we are only dealing with Kac
algebras, we recall the definition of the Fourier transform and rapid
decay only for Kac algebras.

Let $\mathbb{H}$ be a compact quantum group. We use the notation
$l^\infty(\widehat{\mathbb{H}}):=\bigoplus_{x\in\irr(\mathbb{H})}\mathcal{B}(H_x)$ to denote the
$l^\infty$ direct sum. The $c_0$ direct sum is denoted by
$c_0(\widehat{\mathbb{H}})\subset l^\infty(\widehat{\mathbb{H}})$ and the algebraic direct sum is
denoted by $c_c(\widehat{\mathbb{H}})\subset c_0(\widehat{\mathbb{H}})$. An element
$a\in c_c(\widehat{\mathbb{H}})$ is said to have finite support and its finite
support is denoted by ${\rm Supp}(a):=\{x\in\irr(\mathbb{H})\,:\,ap_x\neq 0\}$,
where $p_x$, for $x\in\irr(\mathbb{H})$ denotes the central minimal projection of
$l^\infty(\widehat{\mathbb{H}})$ corresponding to the block $\mathcal{B}(H_x)$.

For a compact quantum group $\mathbb{H}$ which is always supposed to
be of Kac type, and $a\in C_c(\widehat{\mathbb{H}})$ we define its
Fourier transform as:
\begin{align*}
  \mathcal{F}_{\mathbb{H}}(a)=\sum_{x\in {\rm Irr}(\mathbb{H})} {\rm dim}(x)({\rm Tr}_x\ot\id)(u^x(ap_x\ot 1))\in{\rm Pol}(\mathbb{H}),
\end{align*}
and its ``Sobolev 0-norm'' by
$\|a\|^2_{\mathbb{H},0}=\sum_{x\in {\rm Irr}(\mathbb{H})}{\rm dim}(x){\rm
  Tr}_x((a^\ast a)p_x)$.

Given a length function $l:\text{Irr}(\mathbb{H})\rightarrow [0,\infty)$, consider the element
$L=\sum_{x\in \text{Irr}(\mathbb{H})}l(x)p_x$ which is affilated to
$c_0(\widehat{\mathbb{H}})$. Let $q_n$ denote the spectral projections of $L$
associated to the interval $[n,n+1)$.

The pair $(\widehat{\mathbb{H}},l)$ is said to have:
\begin{itemize}
\item\textit{Polynomial growth} if there exists a polynomial
  $P\in \mathbb{R}[X]$ such that for every $k\in \mathbb{N}$ one has
$$\sum_{x\in\irr(\mathbb{H}),\,k\leq l(x)<k+1}\dim(x)^2\leq P(k)$$
\item\textit{Property} $(RD)$ if there exists a polynomial
  $P\in \mathbb{R}[X]$ such that for every $k\in \mathbb{N}$ and
  $a\in q_kc_c(\widehat{\mathbb{H}})$, we have
  $\|\mathcal{F}(a)\|_{C(\mathbb{H})}\leq P(k)\|a\|_{\mathbb{H},0}$.
\end{itemize}
Finally, \textit{$\widehat{\mathbb{H}}$ is said to have polynomial growth
  (resp.\ property $($RD$)$} if there exists a length function $l$ on
${\rm Irr}(\mathbb{H})$ such that $(\widehat{\mathbb{H}},l)$ has polynomial growth
(resp.\ property $(RD)$).

It is known from \cite{Ve07} that if $(\widehat{\mathbb{H}},l)$ has polynomial
growth then $(\widehat{\mathbb{H}},l)$ has rapid decay and the converse also
holds when we assume $\mathbb{H}$ to be co-amenable. Moreover, it is shown also
shown in \cite{Ve07} that duals of compact connected real Lie groups
have polynomial growth. The fact that polynomial growth implies $(RD)$
can easily be deduced from the following lemma.

\begin{lemma}\label{PGRD} Let $\mathbb{H}$ be a CQG, $F\subset{\rm Irr}(\mathbb{H})$ a finite
  subset and $a\in l^\infty(\widehat{\mathbb{H}})$ with $ap_x=0$ for all
  $x\notin F$. Then,
$$\Vert\mathcal{F}_{\mathbb{H}}(a)\Vert\leq 2\sqrt{\sum_{x\in F}{\rm dim}(x)^2}\Vert a\Vert_{\mathbb{H},0}.$$
\end{lemma}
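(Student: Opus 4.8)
The plan is to estimate $\|\mathcal{F}_{\mathbb{H}}(a)\|$ by splitting $a$ over the finitely many blocks in $F$ and controlling each summand by its operator norm on $C(\mathbb{H})$, then summing via Cauchy--Schwarz. Write $a = \sum_{x \in F} a p_x$, so $\mathcal{F}_{\mathbb{H}}(a) = \sum_{x \in F} \dim(x)(\operatorname{Tr}_x \ot \id)(u^x(ap_x \ot 1))$. For a single block, set $b_x := ap_x \in \mathcal{B}(H_x)$ and consider $T_x := \dim(x)(\operatorname{Tr}_x \ot \id)(u^x(b_x \ot 1)) \in \operatorname{Pol}(\mathbb{H})$. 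The first step is to bound $\|T_x\|_{C(\mathbb{H})}$ in terms of $\|b_x\|_2 := (\operatorname{Tr}_x(b_x^* b_x))^{1/2}$. The key point is that $u^x$ is a unitary in $\mathcal{B}(H_x) \ot C(\mathbb{H})$; writing $u^x = \sum_{i,j} e_{ij} \ot u^x_{ij}$ in a fixed orthonormal basis, one has $T_x = \dim(x) \sum_{i,j} (b_x)_{ji} u^x_{ij}$, and the standard estimate (using that $(u^x_{ij})$ form a row/column of a unitary, or equivalently that $\|\sum_{ij} c_{ij} u^x_{ij}\| \le \dim(x)^{1/2} (\sum_{ij} |c_{ij}|^2)^{1/2}$ — this is the elementary norm bound behind rapid decay, valid in the Kac case where the Haar state is tracial) gives $\|T_x\|_{C(\mathbb{H})} \le 2 \dim(x) \|b_x\|_2$. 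The factor $2$ is harmless slack; one can in fact get $\dim(x)$, but I will keep $2\dim(x)$ to be safe and to match the statement.

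Granting the single-block bound, the second step is purely arithmetic: since $\mathcal{F}_{\mathbb{H}}(a) = \sum_{x \in F} T_x$, the triangle inequality gives
\begin{displaymath}
  \|\mathcal{F}_{\mathbb{H}}(a)\| \le \sum_{x \in F} \|T_x\| \le 2 \sum_{x \in F} \dim(x)\|b_x\|_2.
\end{displaymath}
Now apply Cauchy--Schwarz to the two factors $\dim(x)$ and $\|b_x\|_2 = (\dim(x))^{-1/2}\cdot (\dim(x)\operatorname{Tr}_x(b_x^*b_x))^{1/2}$, i.e.\ regroup as $\dim(x)\|b_x\|_2 = \dim(x)^{1/2} \cdot \big(\dim(x)\operatorname{Tr}_x((a^*a)p_x)\big)^{1/2}$, yielding
\begin{displaymath}
  \sum_{x \in F} \dim(x)\|b_x\|_2 \le \Big(\sum_{x \in F} \dim(x)\Big)^{1/2} \Big(\sum_{x \in F}\dim(x)\operatorname{Tr}_x((a^*a)p_x)\Big)^{1/2} \le \Big(\sum_{x \in F}\dim(x)^2\Big)^{1/2} \|a\|_{\mathbb{H},0},
\end{displaymath}
using $\dim(x) \le \dim(x)^2$ and recognizing the second factor as $\|a\|_{\mathbb{H},0}$. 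Combining with the previous display gives the claimed inequality.

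The main obstacle — really the only non-formal point — is the single-block operator-norm estimate $\|T_x\|_{C(\mathbb{H})} \le 2\dim(x)\|b_x\|_2$. This is where one needs the representation-theoretic input: one writes $T_x$ in terms of the matrix coefficients of the irreducible $u^x$, and then either invokes the orthogonality relations for the Haar state together with a $C^*$-norm estimate (e.g.\ Haagerup-type inequality at the level of a single coefficient block), or, more directly, notes that $T_x = \dim(x)(\operatorname{Tr}_x \ot \id)(u^x(b_x \ot 1))$ and bounds $\|T_x\| \le \dim(x) \|(\operatorname{Tr}_x \ot \id)(u^x(b_x\ot 1))\|$ by a slicing argument: since $u^x$ is unitary, $\|(\operatorname{Tr}_x \ot \id)(u^x(b_x\ot 1))\| \le \|b_x\|_1 \le \dim(x)^{1/2}\|b_x\|_2$ is too lossy, so instead one uses that $(\operatorname{Tr}_x\ot\id)(u^x(\,\cdot\,\ot 1))$, as a map $\mathcal{B}(H_x) \to C(\mathbb{H})$, has completely bounded norm controlled by $\dim(x)^{-1/2}$ times the Hilbert--Schmidt norm — this is precisely the content of the Kac-case estimate used in \cite{Ve07}. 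I would cite that estimate (or reprove it in two lines from the orthogonality relations and $\|u^x_{ij}\|\le 1$ combined with Haagerup's trick on the free-group-like structure of a single block) and then the rest is the bookkeeping above.
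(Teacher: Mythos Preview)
Your overall strategy --- split $a$ block by block, bound each $T_x$ in operator norm, then sum via Cauchy--Schwarz --- is exactly the standard argument (the paper itself gives no proof, only cites \cite{Ve07} and \cite{BVZ14}, where this is done). The gap is in the single-block estimate: the bound $\|T_x\|\le 2\dim(x)\|b_x\|_2$ that you state is \emph{false}. Take $b_x=\id_{H_x}$; then $T_x=\dim(x)\chi(u^x)$, so for a classical compact group $\|T_x\|_{C(G)}=\dim(x)^2$, whereas $2\dim(x)\|b_x\|_2=2\dim(x)^{3/2}$, and the inequality fails as soon as $\dim(x)\ge 5$. In fact your own ``standard estimate'' $\|\sum_{ij}c_{ij}u^x_{ij}\|\le\dim(x)^{1/2}(\sum|c_{ij}|^2)^{1/2}$ (which is correct, via the slice-map / SVD argument you sketch) gives $\|T_x\|\le\dim(x)^{3/2}\|b_x\|_2$, not $\dim(x)\|b_x\|_2$; you dropped a factor of $\dim(x)^{1/2}$ when multiplying by the $\dim(x)$ in front.

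The fix is immediate once you stop fighting the ``lossy'' bound: $\|T_x\|\le\dim(x)\|b_x\|_1\le\dim(x)^{3/2}\|b_x\|_2$ is precisely what you need. Just regroup the Cauchy--Schwarz step as
\[
\sum_{x\in F}\dim(x)^{3/2}\|b_x\|_2
=\sum_{x\in F}\dim(x)\cdot\bigl(\dim(x)^{1/2}\|b_x\|_2\bigr)
\le\Bigl(\sum_{x\in F}\dim(x)^2\Bigr)^{1/2}\Bigl(\sum_{x\in F}\dim(x)\|b_x\|_2^2\Bigr)^{1/2},
\]
which is exactly $\bigl(\sum_{x\in F}\dim(x)^2\bigr)^{1/2}\|a\|_{\mathbb{H},0}$ --- indeed without the factor $2$. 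So the trace-norm bound you dismissed is not too lossy at all; it is sharp for this purpose, and the sharper inequality you reached for does not exist.
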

\begin{proof}
  One can copy the proof of Proposition 4.2, assertion $(a)$, in
  \cite{BVZ14} or the proof of Proposition 4.4, assertion $(ii)$, in
  \cite{Ve07}.
\end{proof}

\subsection{Technicalities}

Let $(\Gamma,G)$ be a matched pair with actions $(\alpha,\beta)$ and denote by
$\mathbb{G}$ the bicrossed product.

Recall that
${\rm Irr}(\mathbb{G})=\sqcup_{\gamma\in I}{\rm Irr}(G_\gamma)$, where
$I\subset\Gamma$ is a complete set of representatives for $\Gamma/G$. For
$\gamma\in I$ and $x\in{\rm Irr}(G_\gamma)$, we denote by $\gamma(x)$ the corresponding
element in ${\rm Irr}(\mathbb{G})$. If a complete set of
representatives of ${\rm Irr}(G_\gamma)$, $x\in{\rm Irr}(G_\gamma)$ is given by
$u^x\in\mathcal{B}(H_x)\ot C(G_\gamma)$ then a representative for
$\gamma(x)$ is given by
$$u^{\gamma(x)}:=\sum_{r,s\in\gamma\cdot G}e_{rs}\ot(1\ot u_r v_{rs})u^x\circ\psi_{r,s}\in\mathcal{B}(l^2(\gamma\cdot
G))\ot C(\mathbb{G}).$$

The lemma below gives a way of obtaining an element
$\widetilde{a} \in c_{c}(\widehat{G})$ from an
$a \in c_{c}(\widehat{G_{\gamma}})$ in a suitable way so that they are
compatible with the Fourier transforms.

\begin{lemma}\label{basicLemma}Let $\gamma\in\Gamma$ and
  $a\in c_c(\widehat{G}_\gamma)$. Define
  $\widetilde{a}\in c_c(\widehat{G})$ by:
$$\widetilde{a}p_y=\sum_{x\in{\rm supp}(a)\text{ and }y\subset{\rm Ind}_\gamma^G(x)}\frac{\dim(x)}{\dim(y)}\sum_{i=1}^{\dim(\morph_G(y,{\rm Ind}_\gamma^G(x)))}(S^y_i)^*(e_{\gamma\gamma}\ot ap_x)S^y_i,$$
where $S^y_i\in{\rm Mor}(y,{\rm Ind}_\gamma^G(x))$ is a basis of isometries
with pairwise orthogonal images. The following holds.
\begin{enumerate}
\item If $(l_\Gamma,l)$ is a matched pair of length functions on
  $(\Gamma,\irr(G))$ then, for all $y\in{\rm supp}(\widetilde{a})$ one
  has
$$l(y)\leq{\rm max}(\{l_\gamma(x)\,:\,x\in{\rm supp}(a)\})+l_\Gamma(\gamma),$$
where $(l_\gamma)_{\gamma\in\Gamma}$ is any family of maps realizing the compatibility
relations of Definition \ref{matchedlength}.
\item $\mathcal{F}_{G_\gamma}(a)=v_{\gamma\gamma}\mathcal{F}_G(\widetilde{a})$.
\item $\Vert \widetilde{a}\Vert_{G,0}\leq\Vert a\Vert_{G_\gamma,0}$.
\end{enumerate}
\end{lemma}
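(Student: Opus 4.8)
I would prove the three assertions in the order (2), (3), (1), since the character/Fourier computation in (2) is the load-bearing identity and (1) rests on the length-function bookkeeping that is really a corollary of Proposition \ref{Proplength} together with Proposition \ref{PropInduced}(3). Throughout, the key structural fact I would exploit is Frobenius reciprocity, $\dim(\morph_G(y,\mathrm{Ind}_\gamma^G(x))) = \dim(\morph_{G_\gamma}(\mathrm{Res}^G_{G_\gamma}(y),x))$, from Proposition \ref{PropInduced}(3), and the explicit formula for $\mathrm{Ind}_\gamma^G(u)$ as the twisted tensor product $\varepsilon_{G_{\gamma^{-1}}}\underset{1}{\ot}u$ acting on $l^2(\gamma\cdot G)\ot H$ by $g\mapsto\sum_{r,s\in\gamma\cdot G}e_{rs}\ot v_{rs}(g)u(\psi^\gamma_{rs}(g))$.

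\textbf{Assertion (2).} The cleanest route is to compute both Fourier transforms as elements of $C(G)$ and compare. For the right-hand side, unwind the definition of $\mathcal{F}_G(\widetilde a)=\sum_{y}\dim(y)(\mathrm{Tr}_y\ot\id)(u^y(\widetilde a p_y\ot 1))$, substitute the defining formula for $\widetilde a p_y$, and use that the isometries $S^y_i\in\morph(y,\mathrm{Ind}_\gamma^G(x))$ satisfy $u^y = (S^y_i)^* u^{\mathrm{Ind}_\gamma^G(x)} S^y_i$ on their images together with $\sum_{y,i} S^y_i (S^y_i)^* = \mathrm{id}$ (completeness of the decomposition of $\mathrm{Ind}_\gamma^G(x)$ into irreducibles). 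The trace is conjugation-invariant, so after summing over $y$ and $i$ the whole expression collapses to $\sum_{x\in\mathrm{supp}(a)}\dim(x)(\mathrm{Tr}_{l^2(\gamma\cdot G)\ot H_x}\ot\id)\big(u^{\mathrm{Ind}_\gamma^G(x)}\,((e_{\gamma\gamma}\ot ap_x)\ot 1)\big)$. Now insert the explicit matrix form of $\mathrm{Ind}_\gamma^G(x)$: the projection $e_{\gamma\gamma}$ in the first leg kills every term except $r=s=\gamma$, and since $\psi^\gamma_{\gamma,\gamma}=\id$ and $v_{\gamma\gamma}$ is the indicator of $G_\gamma$, what survives is exactly $v_{\gamma\gamma}\sum_{x}\dim(x)(\mathrm{Tr}_{H_x}\ot\id)(u^x(ap_x\ot 1)) = v_{\gamma\gamma}\,\iota(\mathcal{F}_{G_\gamma}(a))$, where $\iota:C(G_\gamma)=v_{\gamma\gamma}C(G)\hookrightarrow C(G)$. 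Since $\mathcal{F}_{G_\gamma}(a)$ already lies in $v_{\gamma\gamma}C(G)$, multiplying back by $v_{\gamma\gamma}$ is harmless, giving $\mathcal{F}_{G_\gamma}(a)=v_{\gamma\gamma}\mathcal{F}_G(\widetilde a)$.

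\textbf{Assertion (3).} Compute $\|\widetilde a\|_{G,0}^2 = \sum_y \dim(y)\,\mathrm{Tr}_y((\widetilde a^*\widetilde a)p_y)$. Using that the $S^y_i$ have pairwise orthogonal images, the block $\widetilde a p_y$ is a sum of mutually orthogonal pieces $(S^y_i)^*(e_{\gamma\gamma}\ot ap_x)S^y_i$ scaled by $\dim(x)/\dim(y)$, so $\mathrm{Tr}_y((\widetilde a^*\widetilde a)p_y)$ expands into a sum over pairs $(x,i)$ of terms $\big(\tfrac{\dim x}{\dim y}\big)^2\mathrm{Tr}_y\big((S^y_i)^*(e_{\gamma\gamma}\ot a^*ap_x)S^y_i\big)$; here I would use that, because $S^y_i$ is an isometry onto a subrepresentation and $e_{\gamma\gamma}\ot a^*a p_x$ is a positive element commuting appropriately, the trace equals $\mathrm{Tr}((e_{\gamma\gamma}\ot a^*ap_x)S^y_i(S^y_i)^*) \le \mathrm{Tr}_{l^2(\gamma\cdot G)\ot H_x}(e_{\gamma\gamma}\ot a^*ap_x) = \mathrm{Tr}_{H_x}(a^*ap_x)$ after summing the projections $S^y_i(S^y_i)^*$ over all $y,i$ to get $\le \mathrm{id}$. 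Summing over $y$ and $i$ and invoking Frobenius reciprocity — $\sum_{y\subset\mathrm{Ind}_\gamma^G(x)}\dim(\morph(y,\mathrm{Ind}_\gamma^G(x)))\dim(y) = \dim(\mathrm{Ind}_\gamma^G(x)) = |\gamma\cdot G|\dim(x)$ — and tracking the factor $\dim(x)^2/\dim(y)^2 \cdot \dim(y) = \dim(x)^2/\dim(y)$, the $\dim(y)$ in the Sobolev norm cancels one power, and a short bookkeeping check (the surviving $e_{\gamma\gamma}$ contributes the factor $1/|\gamma\cdot G|$ via $\mathrm{Tr}_{l^2(\gamma\cdot G)}(e_{\gamma\gamma})=1$ against $\dim(\mathrm{Ind})=|\gamma\cdot G|\dim x$) yields $\|\widetilde a\|_{G,0}^2 \le \sum_{x\in\mathrm{supp}(a)}\dim(x)\mathrm{Tr}_{H_x}(a^*ap_x) = \|a\|_{G_\gamma,0}^2$. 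The inequality rather than equality comes solely from replacing $\sum_{y,i}S^y_i(S^y_i)^*$ (which projects onto $\mathrm{Ind}_\gamma^G(x)$) by $\mathrm{id}$, or rather from the fact that not every $y$ need occur; I expect equality in fact but the inequality is all that is needed.

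\textbf{Assertion (1) and the main obstacle.} For (1), fix $y\in\mathrm{supp}(\widetilde a)$; by construction there is $x\in\mathrm{supp}(a)$ with $y\subset\mathrm{Ind}_\gamma^G(x)$. By the identification $\mathrm{Ind}_\gamma^G(x)=\varepsilon_{G_{\gamma^{-1}}}\underset{1}{\ot}x$ and Theorem \ref{ThmFusion}(3) applied with $\gamma_1=1$ (so $1(u^y)\subset\gamma^{-1}(\varepsilon)\ot\gamma(u^x)$ up to the reindexing there), combined with Definition \ref{matchedlength}(iv) and (i) (which gives $l_{\gamma^{-1}}(\varepsilon_{G_{\gamma^{-1}}})=l_\Gamma(\gamma^{-1})=l_\Gamma(\gamma)$), one gets $l(y)=l_1(y)\le l_{\gamma^{-1}}(\varepsilon_{G_{\gamma^{-1}}})+l_\gamma(x)=l_\Gamma(\gamma)+l_\gamma(x)\le l_\Gamma(\gamma)+\max\{l_\gamma(x):x\in\mathrm{supp}(a)\}$. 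The one delicate point — the main obstacle — is matching conventions: the induced representation is defined via the \emph{twisted} tensor product with the orbit of $\gamma^{-1}$ (not $\gamma$) sitting in the first leg, and the fusion formula in Theorem \ref{ThmFusion}(3) is stated for $\morph(\gamma_1(u),\gamma_2(v)\ot\gamma_3(w))$ with the twisted tensor product $v\underset{r}{\ot}w$ over $G_r$ for $r\in\gamma_1\cdot G$; so one must verify that $y\subset\mathrm{Ind}_\gamma^G(x)$ in $\mathrm{Rep}(G)=\mathrm{Rep}(G_1)$ translates, via $\delta_{1\cdot G,\ldots}$ and the orbit arithmetic $1\in(\gamma^{-1}\cdot G)(\gamma\cdot G)$, into the hypothesis of Definition \ref{matchedlength}(iv) with $(\gamma_1,\gamma_2,\gamma_3)=(1,\gamma^{-1},\gamma)$ or the appropriate permutation — and that the condition ``$\dim\morph_{G_r}(u^z\circ\psi^{\gamma_3}_{r,r},u^x\otimes_r u^y)\ne 0$ for some $r$'' is exactly the nonvanishing extracted from $\mathcal{F}$ when $y\subset\mathrm{Ind}_\gamma^G(x)$. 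Once the index $\gamma$ vs.\ $\gamma^{-1}$ and the roles of the three orbits are pinned down (using $(\gamma\cdot G)^{-1}=\gamma^{-1}\cdot G$ from the proof of Theorem \ref{IrrBC}(4)), the estimate is immediate.
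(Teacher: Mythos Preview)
Your approach for all three parts follows essentially the same line as the paper's proof: for (2) you unwind the Fourier transform, use the intertwining property of the $S^y_i$ and the completeness relation $\sum_{y,i}S^y_i(S^y_i)^*=\id$, then let the $e_{\gamma\gamma}$ pick out the $(\gamma,\gamma)$ block of the explicit induced representation; for (1) you read the bound directly off the matched-pair axiom (iv) via $\mathrm{Ind}_\gamma^G(x)=\varepsilon_{G_{\gamma^{-1}}}\underset{1}{\ot}x$. These two parts are correct and match the paper almost verbatim (the paper simply carries the factor $v_{\gamma\gamma}$ from the outset in (2), but as you note this is cosmetic).

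There is, however, a genuine gap in your treatment of (3). After reducing to the diagonal $i=j$ terms and using the projection bound $S^y_i(S^y_i)^*\le 1$, you are left with
\[
\|\widetilde a\|_{G,0}^2\ \le\ \sum_{x}\sum_{y,i}\frac{\dim(x)^2}{\dim(y)}\,\mathrm{Tr}\bigl((e_{\gamma\gamma}\ot a^*ap_x)\,S^y_i(S^y_i)^*\bigr).
\]
The coefficient $\dim(x)^2/\dim(y)$ depends on $y$, so you cannot simply sum the projections to $\id$ and be done. The identity $\sum_y m(y)\dim(y)=|\gamma\cdot G|\dim(x)$ that you invoke is correct but irrelevant here, and there is no ``$1/|\gamma\cdot G|$ factor from $e_{\gamma\gamma}$'' (one has $\mathrm{Tr}_{l^2(\gamma\cdot G)}(e_{\gamma\gamma})=1$, not $1/|\gamma\cdot G|$). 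What actually closes the estimate is the pointwise bound $\dim(x)\le\dim(y)$, which follows from Frobenius reciprocity in the form $y\subset\mathrm{Ind}_\gamma^G(x)\Longleftrightarrow x\subset\mathrm{Res}^G_{G_\gamma}(y)$: this kills one factor of $\dim(x)/\dim(y)$, leaving $\sum_x\dim(x)\sum_{y,i}\mathrm{Tr}((e_{\gamma\gamma}\ot a^*ap_x)S^y_i(S^y_i)^*)=\sum_x\dim(x)\mathrm{Tr}_{H_x}(a^*ap_x)=\|a\|_{G_\gamma,0}^2$. Once you insert this step your argument is complete and coincides with the paper's. (Incidentally, your expectation of equality in (3) is not borne out: both the projection inequality and the bound $\dim(x)\le\dim(y)$ are generically strict, since $e_{\gamma\gamma}\ot ap_x$ has no reason to commute with the isotypic projections $S^y_i(S^y_i)^*$.)
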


\begin{proof}
  $(1)$. Since any $y\in{\rm supp}(\widetilde{a})$ is such that
  $y\subset{\rm Ind}_\gamma^G(x)=\varepsilon_{G_{\gamma^{-1}}}\underset{1}{\ot} x$ for some
  $x\in{\rm supp}(a)$, it follows that any
  $y\in{\rm supp}(\widetilde{a})$ satisfies
  $l(y)=l_1(y)\leq
  l_{\gamma^{-1}}(\varepsilon_{G_{\gamma^{-1}}})+l_\gamma(x)=l_\Gamma(\gamma^{-1})+l_\gamma(x)=l_\Gamma(\gamma)+l_\gamma(x)$
  for some $x\in{\rm supp}(a)$.

$(2)$. One has:
\begin{eqnarray*}
  v_{\gamma\gamma}\mathcal{F}_G(\widetilde{a})&=&v_{\gamma\gamma}\sum_{y}{\rm dim}(y)({\rm Tr}_y\ot\id)(u^y\widetilde{a}p_y\ot 1)\\
                                    &=&v_{\gamma\gamma}\sum_{x\in{\rm supp}(a),\,y\subset{\rm Ind}_\gamma^G(x)}\sum_{i=1}^{{\rm dim}({\rm Mor}(y,{\rm Ind}_\gamma^G(x)))}{\rm dim}(x)({\rm Tr}_y\ot\id)(u^y((S^y_i)^*(e_{\gamma\gamma}\ot ap_x)S^y_i)\ot 1)\\
                                    &=&v_{\gamma\gamma}\sum_{x,y,i}{\rm dim}(x)({\rm Tr}_y\ot\id)(((S^y_i)^*\ot 1){\rm Ind}_\gamma^G(u^x)(e_{\gamma\gamma}\ot ap_x\ot 1)(S^y_i\ot 1))\\
                                    &=&v_{\gamma\gamma}\sum_{x,y,i}{\rm dim}(x)({\rm Tr}_{l^2(\gamma\cdot G)\ot H_x}\ot\id)({\rm Ind}_\gamma^G(u^x)(e_{\gamma\gamma}\ot ap_x\ot 1)(S^y_i(S^y_i)^*\ot 1))\\
                                    &=&v_{\gamma\gamma}\sum_{x\in{\rm supp}(a)}{\rm dim}(x)({\rm Tr}_{l^2(\gamma\cdot G)\ot H_x}\ot\id)({\rm Ind}_\gamma^G(u^x)(e_{\gamma\gamma}\ot ap_x\ot 1))\\
                                    &=&v_{\gamma\gamma}\sum_{x\in{\rm supp}(a)}{\rm dim}(x)({\rm Tr}_{x}\ot\id)(u^x ap_x\ot 1))=\mathcal{F}_{G_\gamma}(a),\\
\end{eqnarray*}
where, in the 3rd equation we use the fact that $(S_i^y)^*\in{\rm Mor}({\rm Ind}_\gamma^G(x),y)$ and, in the last equation we use the definition of the representation  ${\rm Ind}_\gamma^G(u^x)$.

$(3)$. One has:
\begin{align*}
  \Vert \widetilde{a}\Vert_{G,0}^2&= \sum_y\dim(y){\rm Tr}_y(\widetilde{a}^*\widetilde{a}p_y)\\
                          &= \sum_{x\in{\rm supp}(a),\,y\subset{\rm Ind}_\gamma^G(x)}\sum_{i,j=1}^{{\rm dim}({\rm Mor}(y,{\rm Ind}_\gamma^G(x)))}\dim(y)\frac{{\rm dim}(x)^2}{{\rm dim}(y)^2}{\rm Tr}_y( (S^y_i)^*(e_{\gamma\gamma}\ot a^*p_x)S^y_i (S^y_j)^*(e_{\gamma\gamma}\ot ap_x)S^y_j)\\
                          &= \sum_{x,y,i}\dim(x)\left(\frac{\dim(x)}{\dim(y)}\right){\rm Tr}_y( (S^y_i)^*(e_{\gamma\gamma}\ot a^*p_x)S^y_i (S^y_i)^*(e_{\gamma\gamma}\ot ap_x)S^y_i)\\
\end{align*}
Since, for all $y,i$, $S^y_i(S^y_i)^*$ is a projection, one has
$S^y_i(S^y_i)^*\leq 1$ hence,
$${\rm Tr}_y( (S^y_i)^*(e_{\gamma\gamma}\ot a^*p_x)S^y_i (S^y_i)^*(e_{\gamma\gamma}\ot ap_x)S^y_i)\leq {\rm Tr}_y( (S^y_i)^*(e_{\gamma\gamma}\ot a^*ap_x)S^y_i).$$
Moreover, by Proposition \ref{PropInduced}, one has $y\subset {\rm Ind}_\gamma^G(x)$ if and only if
$$\dim(\morph_{G_\gamma}({\rm Res}^G_{G_\gamma}(y),x))=\dim(\morph_G(y,{\rm
  Ind}_\gamma^G(x)))\neq 0.$$
  Since $x$ is irreducible, we find that
$y\subset {\rm Ind}_\gamma^G(x)\Leftrightarrow x\subset {\rm Res}^G_{G_\gamma}(y)$. In particular, for any
$y\subset {\rm Ind}_\gamma^G(x)$ one has $\dim(x)\leq\dim(y)$. Hence,
\begin{displaymath}
  \begin{split}
    \Vert \widetilde{a}\Vert_{G,0}^2&\leq \sum_{x,y,i}\dim(x){\rm Tr}_y(
    (S^y_i)^*(e_{\gamma\gamma}\ot a^*ap_x)S^y_i)
    =\sum_{x,y,i}\dim(x){\rm Tr}_{l^2(\gamma\cdot G)\ot H_x}( e_{\gamma\gamma}\ot a^*ap_x(S^y_i)^*S^y_i)\\
    &= \sum_{x\in{\rm supp}(a)}\dim(x){\rm Tr}_{l^2(\gamma\cdot G)\ot H_x}(
    e_{\gamma\gamma}\ot a^*ap_x)=\sum_{x\in{\rm supp}(a)}\dim(x){\rm Tr}_x(a^*ap_x)=\Vert
    a\Vert^2_{G_\gamma,0}. \qedhere
  \end{split}
\end{displaymath}
\end{proof}

\begin{lemma}\label{LemmaPGRD}
  Let $(l_\Gamma,l)$ be a matched pair of length functions on
  $(\Gamma,\irr(G))$. If $(\widehat{G},l)$ has polynomial growth then,
  there exists $C>0$ and $N\in\N$ such that:
\begin{itemize}
\item $\Vert\mathcal{F}_G(a)\Vert\leq C(k+1)^N\Vert a\Vert_{G,0}$ for all
  $a\in c_c(\widehat{G})$ with
  ${\rm supp}(a)\subset\{x\in\irr(G)\,:\, l(x)<k+1\}$.
\item
  $\vert\gamma\cdot G\vert{\rm dim}(x)\leq C(l_\Gamma(\gamma)+l_\gamma(x)+1)^N$ for all
  $\gamma\in\Gamma$, $x\in\irr(G_\gamma)$.
\item For all $\gamma\in\Gamma$, $\sum_{x\in\irr(G_\gamma),\,l_\gamma(x)<k+1}\dim(x)^2\leq C^2(k+l_\Gamma(\gamma)+1)^{2N}$.
\end{itemize}
\end{lemma}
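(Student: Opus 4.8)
The plan is to deduce all three estimates from one cumulative consequence of polynomial growth of $(\widehat{G},l)$, combined with the induced representation $\Ind_\gamma^G$ and Frobenius reciprocity (Proposition \ref{PropInduced}(3)). First I would record that polynomial growth produces constants $C_0>0$ and $N_0\in\N$ with
\[
\sum_{y\in\irr(G),\,l(y)<t}\dim(y)^2\le C_0(t+1)^{N_0}\qquad\text{for every real }t\ge 0 ,
\]
obtained by summing the per-level polynomial bound $P(k)$ from the definition of polynomial growth over $k=0,\dots,\lceil t\rceil-1$ and bounding the resulting expression by a polynomial in $t+1$; in particular each set $\{y\in\irr(G):l(y)<t\}$ is finite. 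I would also fix once and for all a family $(l_\gamma)_{\gamma\in\Gamma}$ realizing the compatibility relations of Definition \ref{matchedlength}, so that $l_1=l$, and recall from the proof of Lemma \ref{basicLemma}(1) that, since $\Ind_\gamma^G(x)=\varepsilon_{G_{\gamma^{-1}}}\underset{1}{\ot}u^x$, every irreducible $y\subset\Ind_\gamma^G(x)$ satisfies $l(y)\le l_{\gamma^{-1}}(\varepsilon_{G_{\gamma^{-1}}})+l_\gamma(x)=l_\Gamma(\gamma)+l_\gamma(x)$, by Definition \ref{matchedlength}(iv).

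The first bullet is immediate from Lemma \ref{PGRD}: applying it with $F={\rm supp}(a)$, which is finite and contained in $\{x:l(x)<k+1\}$, gives $\|\mathcal{F}_G(a)\|\le 2\sqrt{\sum_{x\in F}\dim(x)^2}\,\|a\|_{G,0}\le 2\sqrt{C_0(k+2)^{N_0}}\,\|a\|_{G,0}$, and $\sqrt{(k+2)^{N_0}}$ is bounded by a constant times $(k+1)^{\lceil N_0/2\rceil}$. For the second bullet I would decompose $\Ind_\gamma^G(x)=\bigoplus_y m_y\,y$; by Frobenius reciprocity $m_y=\dim\morph_{G_\gamma}({\rm Res}^G_{G_\gamma}(y),x)\le\dim(y)/\dim(x)\le\dim(y)$ for every $y$, while $m_y>0$ forces $l(y)\le l_\Gamma(\gamma)+l_\gamma(x)=:k_0$. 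Since $\Ind_\gamma^G(x)$ acts on $l^2(\gamma\cdot G)\ot H_x$, so that $\dim\Ind_\gamma^G(x)=|\gamma\cdot G|\dim(x)$, this yields
\[
|\gamma\cdot G|\dim(x)=\sum_{y\,:\,m_y>0}m_y\dim(y)\le\sum_{y\,:\,l(y)\le k_0}\dim(y)^2\le C_0(k_0+2)^{N_0},
\]
which is of the required form after adjusting constants.

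For the third bullet, fix $\gamma$ and $k$; every $x\in\irr(G_\gamma)$ with $l_\gamma(x)<k+1$ is, by Proposition \ref{PropInduced}(3), contained in ${\rm Res}^G_{G_\gamma}(y)$ for at least one $y\subset\Ind_\gamma^G(x)$, and every such $y$ has $l(y)\le l_\Gamma(\gamma)+l_\gamma(x)<l_\Gamma(\gamma)+k+1$. Choosing one such $y$ for each $x$, grouping the sum over $x$ accordingly, and using the elementary bound $\sum_{x\subset{\rm Res}^G_{G_\gamma}(y)}\dim(x)^2\le\bigl(\sum_{x\subset{\rm Res}^G_{G_\gamma}(y)}\dim(x)\bigr)^2\le\dim(y)^2$, I obtain
\[
\sum_{\substack{x\in\irr(G_\gamma)\\ l_\gamma(x)<k+1}}\dim(x)^2\le\sum_{\substack{y\in\irr(G)\\ l(y)<l_\Gamma(\gamma)+k+1}}\dim(y)^2\le C_0(l_\Gamma(\gamma)+k+2)^{N_0}.
\]
Finally I would enlarge $C_0,N_0$ to a single pair $(C,N)$ so that all three displayed inequalities take the forms stated in the lemma.

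The argument is essentially bookkeeping once the induced representation is available, so I do not expect a serious obstacle. The step needing the most care is the second bullet, where one must control the index $|\gamma\cdot G|$ itself: the trick is to read it off from the dimension identity $\dim\Ind_\gamma^G(x)=|\gamma\cdot G|\dim(x)$ and then estimate the right-hand side via the Frobenius multiplicity bound $m_y\le\dim(y)$, which routes the whole quantity back to the cumulative growth estimate for $\widehat{G}$. The only other slightly delicate point is the passage from the integer-indexed definition of polynomial growth to the real-variable bound used throughout, handled by rounding up and adjusting constants.
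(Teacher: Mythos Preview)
Your argument is correct. For the first two bullets you do exactly what the paper does: apply Lemma~\ref{PGRD} together with the cumulative growth bound, and for the orbit--dimension estimate decompose $\Ind_\gamma^G(x)$ into irreducibles, bound the multiplicity $m_y$ by $\dim(y)$ via Frobenius reciprocity, and use that every constituent $y$ satisfies $l(y)\le l_\Gamma(\gamma)+l_\gamma(x)$.

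Your third bullet, however, takes a genuinely different and more elementary route than the paper. The paper proves that point by invoking the technical Lemma~\ref{basicLemma}: it takes $p_F=\sum_{x\in F}p_x\in c_c(\widehat{G_\gamma})$, passes to the associated $\widetilde{p_F}\in c_c(\widehat{G})$, uses $\mathcal{F}_{G_\gamma}(p_F)=v_{\gamma\gamma}\mathcal{F}_G(\widetilde{p_F})$ together with the already-proved first bullet to bound $\bigl\|\sum_{x\in F}\dim(x)\chi(x)\bigr\|$, and then evaluates at $1$ to square-root the inequality. Your argument bypasses Lemma~\ref{basicLemma} entirely: you simply observe that each $x$ with $l_\gamma(x)<k+1$ sits inside ${\rm Res}^G_{G_\gamma}(y)$ for some $y$ with $l(y)<l_\Gamma(\gamma)+k+1$, group the $x$'s by one such choice of $y$, and use the trivial bound $\sum_{x\subset{\rm Res}(y)}\dim(x)^2\le\dim(y)^2$. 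This is both shorter and yields a sharper exponent (a single power of the polynomial rather than a square), which you then relax to the stated form. The paper's approach has the advantage of reusing the Fourier-transform machinery already set up for the rapid-decay proof, but for this lemma your direct counting argument is the cleaner one.
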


\begin{proof}
  Let $P\in\R[X]$ be such that
  $\sum_{x\in\irr(G),\,k\leq l(x)<k+1}{\rm dim}(x)^2\leq P(k)$ for all
  $k\in\N$ and let $C_1>0$ and $N_1\in\N$ be such that
  $P(k)\leq C_1(k+1)^{N_1}$ for all $k\in\N$. By Lemma \ref{PGRD} one has,
  for all $a\in c_c(\widehat{G})$, with
  ${\rm supp}(a)\subset\{x\in\irr(G)\,:\,k\leq l(x)<k+1\}$,
  $\Vert\mathcal{F}_G(a)\Vert\leq 2\sqrt{P(k)}\Vert a\Vert_{G,0}\leq
  \sqrt{C_1}(k+1)^{\frac{N_1}{2}}\Vert a\Vert_{G,0}$. Now, suppose that
  ${\rm supp}(a)\subset\{x\in\irr(G)\,:\,l(x)< k+1\}$ so that
  $a\in q_kc_c(\widehat{G})$, where $q_k=\sum_{j=0}^k p_j$ and
  $p_j=\sum_{x\in\irr(G),\,k\leq l(x)< k+1}$. One has,
\begin{equation}\label{Eq1}
\Vert\mathcal{F}_G(a)\Vert=\sum_{j=0}^k\Vert\mathcal{F}_G(ap_j)\Vert\leq\sum_{j=0}^k \sqrt{C_1}(j+1)^{\frac{N_1}{2}}\Vert a\Vert_{G,0}\leq\sqrt{C_1}(k+1)^{\frac{N_1}{2}+1}\Vert a\Vert_{G,0}.
\end{equation}
Now, let $\gamma\in\Gamma$ and $x\in\irr(G_\gamma)$. By Proposition \ref{PropInduced} one has:
\begin{eqnarray*}
\vert\gamma\cdot G\vert\dim(x)&=&\dim({\rm Ind}_\gamma^G(x))=\sum_{y\in\irr(G)}\dim(\morph_G(y,{\rm Ind}_\gamma^G(x)))\dim(y)\\
&=&\sum_{y\in\irr(G),\,y\subset{\rm Ind}_\gamma^G(x)}\dim(\morph_{G_\gamma}({\rm Res}^G_{G_\gamma}(y),x))\dim(y).
\end{eqnarray*}
Note that
$\dim(\morph_{G_\gamma}({\rm Res}^G_{G_\gamma}(y),x))\leq\dim(y)$ for all
$x,y$. Moreover, since
${\rm Ind}_\gamma^G(x)\simeq \varepsilon_{G_{\gamma^{-1}}}\underset{1}{\ot} x$ and the pair
$(l_\Gamma, l)$ is matched, one has
$\{y\in\irr(G),\,y\subset{\rm Ind}_\gamma^G(x)\}\subset\{y\in\irr(G)\,:\,l(y)\leq
l_\Gamma(\gamma)+l_\gamma(x)\}$. Hence,
\begin{eqnarray}
\vert\gamma\cdot G\vert\dim(x)&\leq& \sum_{y\in\irr(G),\,l(y)< l_\Gamma(\gamma)+l_\gamma(x)+1}\dim(y)^2=\sum_{j=0}^{ l_\Gamma(\gamma)+l_\gamma(x)} \sum_{y\in\irr(G),\,j\leq l(y)<j+1}\dim(y)^2\nonumber\\
&\leq&\sum_{j=0}^{ l_\Gamma(\gamma)+l_\gamma(x)} P(j)\leq C_1\sum_{j=0}^{ l_\Gamma(\gamma)+l_\gamma(x)} (j+1)^{N_1}\leq C_1(l_\Gamma(\gamma)+l_\gamma(x)+1)^{N_1+1}.\label{Eq2}
\end{eqnarray}
It follows from Equations $(\ref{Eq1})$ and $(\ref{Eq2})$ that
$C:={\rm Max}(C_1,\sqrt{C_1})$ and $N:=N_1+1$ do the job.

Let us show the last point. Fix $\gamma\in\Gamma$ and let
$F\subset\irr(G_\gamma)$ a finite subset. Define
$p_F\in c_c(\widehat{G}_\gamma)$ by $p_F=\sum_{x\in F}p_x$ and note that
$\mathcal{F}_{G_\gamma}(p_F)=\sum_{x\in F}\dim(x)\chi(x)$ and
$\Vert a\Vert_{G_\gamma,0}^2=\sum_{x\in F}\dim(x)^2$. Suppose that
$F\subset\{x\in\irr(G_\gamma)\,:\,l_\gamma(x)<k+1\}$. Using Lemma \ref{basicLemma} and
the first part of the proof we find, since
$\widetilde{p_F}\in c_c(\widehat{G})$ with
${\rm supp}(\widetilde{p_F})\subset\{x\in\irr(G)\,:\,l(x)<l_\Gamma(\gamma)+k+1\}$,
\begin{eqnarray*}
\left\Vert\sum_{x\in F}\dim(x)\chi(x)\right\Vert^2&=&\Vert\mathcal{F}_{G_\gamma}(p_F)\Vert^2=\Vert v_{\gamma\gamma}\mathcal{F}_G(\widetilde{p_F})\Vert^2\leq \Vert\mathcal{F}_G(\widetilde{p_F})\Vert^2\leq C^2(k+l_\Gamma(\gamma)+1)^{2N}\Vert \widetilde{p_F}\Vert^2_{G,0}\\
&\leq& C^2(k+l_\Gamma(\gamma)+1)^{2N}\Vert p_F\Vert^2_{G_\gamma,0}=C^2(k+l_\Gamma(\gamma)+1)^{2N}\sum_{x\in F}\dim(x)^2.
\end{eqnarray*}
It follows that:
$$\left(\sum_{x\in F}\dim(x)^2\right)^2=\left(\sum_{x\in F}\dim(x)\chi(x)(1)\right)^2\leq \left\Vert\sum_{x\in F}\dim(x)\chi(x)\right\Vert_{C(G)}^2\leq C^2(k+l_\Gamma(\gamma)+1)^{2N}\sum_{x\in F}\dim(x)^2.$$
Hence, for all non empty finite subsets
$F\subset\{x\in\irr(G_\gamma)\,:\,l_\gamma(x)<k+1\}$ one has
$\sum_{x\in F}\dim(x)^2\leq C^2(k+l_\Gamma(\gamma)+1)^{2N}$. The last assertion
follows.
\end{proof}

\subsection{Polynomial growth for bicrossed product}
 
We start with the following result.

\begin{theorem} Suppose that that $(l_G,l_\Gamma)$ is a matched pair of
  length functions on $(\Gamma,G)$. If both $(\Gamma,l_\Gamma)$ and
  $(\widehat{G},l_G)$ has polynomial growth then
  $(\widehat{\mathbb{G}},\widetilde{l})$ have polynomial growth.
\end{theorem}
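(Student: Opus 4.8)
The plan is to read off the relevant data from the classification: under $\irr(\mathbb{G})=\sqcup_{\gamma\in I}\irr(G_\gamma)$ a class $z\in\irr(\mathbb{G})$ has the form $z=\gamma(x)$ with $\dim(z)=|\gamma\cdot G|\dim(x)$ and, by Proposition \ref{Proplength}$(3)$, $\widetilde l(z)=l_\gamma(x)+l_\Gamma(\gamma)$ for a genuine length function $\widetilde l$ on $\irr(\mathbb{G})$. Thus it suffices to produce a polynomial $P$ with $\sum_{z\in\irr(\mathbb{G}),\ k\le\widetilde l(z)<k+1}\dim(z)^2\le P(k)$ for all $k\in\N$. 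The whole point is that Lemma \ref{LemmaPGRD} already repackages the polynomial growth of $(\widehat G,l)$ into uniform estimates on the dimension inflation $|\gamma\cdot G|$ and on the partial sums $\sum_x\dim(x)^2$ in the block $G_\gamma$, in terms of $l_\Gamma(\gamma)$; it remains to combine this with the classical polynomial growth of $(\Gamma,l_\Gamma)$.

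First I would reduce to finitely many blocks. If $k\le l_\gamma(x)+l_\Gamma(\gamma)<k+1$ then $l_\Gamma(\gamma)<k+1$ since $l_\gamma(x)\ge 0$, so only the blocks indexed by $\gamma\in I$ with $l_\Gamma(\gamma)<k+1$ can contribute. Because $(\Gamma,l_\Gamma)$ has polynomial growth, i.e. $\#\{\gamma\in\Gamma:\ j\le l_\Gamma(\gamma)<j+1\}$ is polynomially bounded in $j$, there is a polynomial $R$ with $\#\{\gamma\in\Gamma:\ l_\Gamma(\gamma)<k+1\}\le R(k)$ for all $k$; since $I\subset\Gamma$, at most $R(k)$ blocks contribute.

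Next I would estimate the contribution of a single block $\gamma$ with $l_\Gamma(\gamma)<k+1$, namely $\sum_{x\in\irr(G_\gamma),\ k\le l_\gamma(x)+l_\Gamma(\gamma)<k+1}|\gamma\cdot G|^2\dim(x)^2$. If this sum is non-empty, choose $x$ in the index set; then $l_\gamma(x)+l_\Gamma(\gamma)<k+1$, and using $\dim(x)\ge 1$ together with the second estimate of Lemma \ref{LemmaPGRD} we get $|\gamma\cdot G|\le|\gamma\cdot G|\dim(x)\le C(l_\Gamma(\gamma)+l_\gamma(x)+1)^N\le C(k+1)^N$. For the remaining sum, every $x$ in the index set has $l_\gamma(x)<k+1$, so the third estimate of Lemma \ref{LemmaPGRD} gives $\sum_{x\in\irr(G_\gamma),\ l_\gamma(x)<k+1}\dim(x)^2\le C^2(k+l_\Gamma(\gamma)+1)^{2N}\le C^2(2k+2)^{2N}$. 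Multiplying, the contribution of the block $\gamma$ is at most $C^2(k+1)^{2N}\cdot C^2(2k+2)^{2N}$.

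Finally, summing over the at most $R(k)$ relevant blocks yields $\sum_{z\in\irr(\mathbb{G}),\ k\le\widetilde l(z)<k+1}\dim(z)^2\le R(k)\,C^4(k+1)^{2N}(2k+2)^{2N}=:P(k)$, a polynomial in $k$, which is precisely polynomial growth of $(\widehat{\mathbb{G}},\widetilde l)$. I do not anticipate a real obstacle here: the analytic heart of the matter is already contained in Lemma \ref{LemmaPGRD}, and what is left is a bookkeeping combination with the growth of $(\Gamma,l_\Gamma)$; the only technical care needed is to peel off the factor $|\gamma\cdot G|$ (using $\dim(x)\ge 1$) \emph{before} invoking the bound on $\sum_x\dim(x)^2$, and to observe that the index restrictions force $l_\Gamma(\gamma)<k+1$ so that all bounds become uniform in $\gamma$.
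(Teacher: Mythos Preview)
Your proof is correct and follows essentially the same route as the paper's: reduce to blocks $\gamma$ with $l_\Gamma(\gamma)<k+1$, bound their number by polynomial growth of $\Gamma$, and bound each block's contribution via Lemma \ref{LemmaPGRD} (the paper uses $x=\varepsilon_{G_\gamma}$ rather than an $x$ from the annulus to control $|\gamma\cdot G|$, and bounds the full ball $\{\widetilde l<k\}$ rather than the annulus, but these are cosmetic differences). One trivial slip: from $l_\gamma(x)+l_\Gamma(\gamma)<k+1$ you get $|\gamma\cdot G|\le C(k+2)^N$, not $C(k+1)^N$, which of course changes nothing.
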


\begin{proof}
  Let $I\subset\Gamma$ be a complete set of representatives for
  $\Gamma/ G$ so that
  $\irr(\mathbb{G})=\sqcup_{\gamma\in I}\irr(G_\gamma)$. Let $k\geq 1$ and define
  $$F_k:=\{z\in\irr(\mathbb{G})\,:\,\widetilde{l}(z)<k\}\subset\sqcup_{\gamma\in I_k}T_{\gamma,k},$$
  where $I_k:=\{\gamma\in \Gamma\,:\,l_\Gamma(\gamma)< k+1\}\cap I$ and
  $T_{\gamma,k}:=\{x\in\irr(G_\gamma)\,:\,l_\gamma(x)< k+1\}$. Since
  $(\Gamma,l_\Gamma)$ has polynomial growth, there exists a polynomial
  $P_1$ such that, for all $k\in\N$, $\vert I_k\vert\leq P_1(k)$. Moreover, since
  $(\widehat{G},l_G)$ has polynomial growth, we can apply Lemma
  \ref{LemmaPGRD} to get $C>0$ and $N\in N$ such that, for all
  $k\in \N$ and all $\gamma\in I_k$, one has
  $\sum_{x\in T_{\gamma,k}}\dim(x)^2\leq C^2(2k+2)^{2N}$ and,
  $\vert\gamma\cdot G\vert=\vert\gamma\cdot G\vert\dim(\varepsilon_G)\leq C(2k+3)^N$. Hence, for all $k\geq 1$,
  \begin{align*}
    \sum_{z\in F_k}\dim(z)^2&=\sum_{\gamma\in I_k}\vert\gamma\cdot G\vert^2\sum_{x\in T_{\gamma, k}}\dim(x)^2\leq C^2(2k+2)^{2N}\sum_{\gamma\in I_k}\vert\gamma\cdot G\vert^2\leq C^4(2k+2)^{2N}(2k+3)^{2N}\vert I_k\vert\\
                       &\leq C^4(2k+2)^{2N}(2k+3)^{2N}P_1(k). \qedhere
  \end{align*}
\end{proof}

To complete the proof of Theorem B, we need the following Proposition.

\begin{proposition} Assume that there exists a length function $l$ on
  $\irr(\mathbb{G})$ such that $(\widehat{\mathbb{G}},l)$ has
  polynomial growth and consider the matched pair of length functions
  $(l_\Gamma, l_G)$ associated to $l$ given in Proposition
  \ref{Proplength}. Then $(\Gamma,l_\Gamma)$ and $(\widehat{G},l_G)$ both have
  polynomial growth.
\end{proposition}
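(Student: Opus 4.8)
The goal is to extract polynomial growth of $(\Gamma, l_\Gamma)$ and of $(\widehat G, l_G)$ from polynomial growth of $(\widehat{\mathbb G}, l)$, where $(l_\Gamma, l_G)$ and the family $(l_\gamma)_{\gamma\in\Gamma}$ are as produced by Proposition~\ref{Proplength}. Recall from that proposition that $l_\Gamma(\gamma) = l([\gamma(\varepsilon_{G_\gamma})])$, $l_G(x) = l([1(x)])$, and more generally $l_\gamma(x) = l([\gamma(u^x)])$. The plan is to recognize each of the two desired growth bounds as a restriction of the single bound $\sum_{z\in\irr(\mathbb G),\,k\le l(z)<k+1}\dim(z)^2\le P(k)$ to an appropriate subfamily of $\irr(\mathbb G)$, using the identification $\irr(\mathbb G)=\sqcup_{\gamma\in I}\irr(G_\gamma)$ with $\dim(\gamma(x))=|\gamma\cdot G|\dim(x)$ from Theorem~\ref{IrrBC}(5) and the classification.

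\emph{Growth of $(\widehat G, l_G)$.} Fix $\gamma=1$: then $G_1=G$, $\psi^1_{1,1}=\id$, $l_1 = l_G$, and for $x\in\irr(G)$ one has $\dim(1(x))=|1\cdot G|\dim(x)=\dim(x)$ and $l([1(x)])=l_G(x)$. Hence for each $k$,
\begin{equation*}
  \sum_{x\in\irr(G),\,k\le l_G(x)<k+1}\dim(x)^2
  = \sum_{z\in 1(\irr(G)),\,k\le l(z)<k+1}\dim(z)^2
  \le \sum_{z\in\irr(\mathbb G),\,k\le l(z)<k+1}\dim(z)^2 \le P(k),
\end{equation*}
so $(\widehat G,l_G)$ has polynomial growth with the same polynomial $P$.

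\emph{Growth of $(\Gamma, l_\Gamma)$.} Here I would use the representations $\gamma(\varepsilon_{G_\gamma})$, for $\gamma\in I$, which have $\dim(\gamma(\varepsilon_{G_\gamma}))=|\gamma\cdot G|\ge 1$ and $l([\gamma(\varepsilon_{G_\gamma})])=l_\Gamma(\gamma)$. The distinct classes $[\gamma(\varepsilon_{G_\gamma})]$ for $\gamma$ ranging over the complete set of representatives $I$ are pairwise distinct irreducible representations of $\mathbb G$ (by Theorem~\ref{IrrBC}(3), since distinct $\gamma\in I$ lie in distinct $\beta$-orbits). Using $\beta$-invariance of $l_\Gamma$ (Proposition~\ref{Proplength}(3)), every $\mu\in\Gamma$ with $k\le l_\Gamma(\mu)<k+1$ lies in the orbit of some $\gamma\in I_k:=\{\gamma\in I: k\le l_\Gamma(\gamma)<k+1\}$, and each orbit $\gamma\cdot G$ has exactly $|\gamma\cdot G|$ elements; thus $\#\{\mu\in\Gamma: k\le l_\Gamma(\mu)<k+1\}=\sum_{\gamma\in I_k}|\gamma\cdot G|\le\sum_{\gamma\in I_k}|\gamma\cdot G|^2=\sum_{\gamma\in I_k}\dim(\gamma(\varepsilon_{G_\gamma}))^2$, and this last sum is at most $\sum_{z\in\irr(\mathbb G),\,k\le l(z)<k+1}\dim(z)^2\le P(k)$. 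Since polynomial growth of a discrete group $(\Gamma,l_\Gamma)$ is exactly the requirement that $\#\{\mu: k\le l_\Gamma(\mu)<k+1\}$ be polynomially bounded in $k$ (this is the standard reformulation, equivalently $\#\{\mu: l_\Gamma(\mu)<k\}$ grows polynomially, which up to summing is the same), we are done.

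\emph{Main obstacle.} The only nontrivial points are bookkeeping ones: checking that as $\gamma$ runs over $I$ (resp. over $I_k$) the classes $[\gamma(\varepsilon_{G_\gamma})]$ are genuinely distinct irreducibles — which is immediate from Theorem~\ref{IrrBC}(3),(5) — and reconciling the group-theoretic definition of polynomial growth (counting group elements in a length shell) with the quantum-group definition (summing squares of dimensions over a length shell of $\irr$); for a classical discrete group viewed as $\widehat{\mathrm L(\Gamma)}$ these coincide since each $z\in\irr$ is one-dimensional, but here the point is simply that counting elements of $\Gamma$ is dominated by the $\irr(\mathbb G)$-side sum via the inequality $|\gamma\cdot G|\le|\gamma\cdot G|^2$. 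No estimate here is delicate; the content is entirely in having Theorem~\ref{IrrBC} and Proposition~\ref{Proplength} available.
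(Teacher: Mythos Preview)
Your proof is correct and follows essentially the same approach as the paper: both arguments treat $(\widehat G,l_G)$ via the dimension- and length-preserving inclusion $x\mapsto 1(x)$, and both bound $\#\{\mu\in\Gamma:k\le l_\Gamma(\mu)<k+1\}$ by passing through the irreducibles $\gamma(\varepsilon_{G_\gamma})$ and using $1\le|\gamma\cdot G|^2=\dim(\gamma(\varepsilon_{G_\gamma}))^2$. If anything, your version is slightly more careful in explicitly invoking $\beta$-invariance of $l_\Gamma$ and summing over orbit representatives $I_k$, whereas the paper's chain of inequalities leaves the indexing set implicit.
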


\begin{proof}
  Assume that $(\widehat{\mathbb{G}},l)$ has polynomial growth. Since
  the map $\irr(G)\rightarrow\irr(\mathbb{G})$, $x\mapsto 1(x)$ is injective,
  dimension preserving and length preserving (by definition of $l_G$),
  it is clear that $(\widehat{G},l_G)$ has polynomial growth. Let us
  show that $(\Gamma,l_\Gamma)$ also has polynomial growth. Let $P$ be a
  polynomial witnessing $(RD)$ for $(\widehat{\mathbb{G}},l)$ and
  $k\in\N$. Define
  $F_k:=\{\gamma\in\Gamma\,:\,k\leq l_\Gamma(\gamma)<k+1\}$. One has, for all $k\in\N$,
\begin{align*}
  \vert F_k\vert&=\sum_{k\leq l([\gamma(\varepsilon_G)])< k+1}1\leq \sum_{k\leq l([\gamma(\varepsilon_G)])< k+1}\vert\gamma\cdot G\vert^2=\sum_{k\leq l([\gamma(\varepsilon_G)])< k+1}\dim([\gamma(\varepsilon_G)])^2\\
        &\leq\sum_{z\in\irr(\mathbb{G}),\,k\leq l(z)< k+1}\dim(z)^2\leq P(k). \qedhere
\end{align*}
\end{proof}

\subsection{Rapid decay for bicrossed product}

Recall that
$l^\infty(\widehat{\mathbb{G}})=\bigoplus_{\gamma\cdot G\in\Gamma/G}\bigoplus_{x\in{\rm
    Irr}(G_\gamma)}\mathcal{B}(l^2(\gamma\cdot G)\ot H_x)$. Let us denote by
$p_{\gamma(x)}$ the central projection of
$l^\infty(\widehat{\mathbb{G}})$ corresponding to the block
$\mathcal{B}(l^2(\gamma\cdot G)\ot H_x)$ and define, for
$\gamma\cdot G\in\Gamma/G$, the central projection :
$$p_\gamma:=\sum_{x\in{\rm Irr}(G_\gamma)} p_{\gamma(x)}\in l^\infty(\widehat{\mathbb{G}}).$$
Note that
$p_\gamma l^\infty(\widehat{\mathbb{G}})=\bigoplus_{x\in{\rm Irr}(G_\gamma)}\mathcal{B}(l^2(\gamma\cdot
G)\ot H_x)\simeq \mathcal{B}(l^2(\gamma\cdot G))\ot L(G_\gamma)$, where
$L(G_\gamma)=\bigoplus_{x\in{\rm Irr}(G_\gamma)}\mathcal{B}(H_x)$ is the group von-Neumann
algebra of $G_\gamma$ (which is also the multiplier C*-algebra of
$C^*_r(G_\gamma)=\bigoplus^{c_0}_{x\in{\rm Irr}(G_\gamma)}\mathcal{B}(H_x)$). Using this
identification, we define
$\pi_\gamma\,:\,c_0(\widehat{\mathbb{G}})\rightarrow\mathcal{B}(l^2(\gamma\cdot G))\ot C^*_r(G_\gamma)\subset
c_0(\widehat{\mathbb{G}})$ to be the $*$-homomorphism given by
$\pi_\gamma(a)=ap_\gamma$, for all $a\in c_0(\widehat{\mathbb{G}})$. We also write,
for $a\in c_0(\widehat{\mathbb{G}})$,
$\pi_\gamma(a)=\sum_{r,s\in\gamma\cdot G}e_{rs}\ot\pi^\gamma_{r,s}(a)$, where we recall that
$(e_{rs})$ are the matrix units associated to the canonical
orthonormal basis $(e_r)_{r\in\gamma\cdot G}$ of $l^2(\gamma\cdot G)$ and
$\pi^\gamma_{r,s}\,:\,c_0(\widehat{\mathbb{G}})\rightarrow C^*_r(G_\gamma)$ is the
completely bounded map defined by
$\pi^\gamma_{r,s}:=(\omega_{e_s,e_r}\ot\id)\circ\pi_\gamma$ and
$\omega_{e_s,e_r}\in\mathcal{B}(l^2(\gamma\cdot G))$, $\omega_{e_s,e_r}(T)=\langle Te_s,e_r\rangle$.

We start with the following result.

\begin{theorem}\label{ThmRD} Let $(l_\Gamma,l_G)$ be a matched pair of
  length functions on $(\Gamma,\irr(G))$. Suppose that
  $(\widehat{G},l_G)$ has polynomial growth and $(\Gamma,l_\Gamma)$ has
  $(RD)$. Then $(\widehat{\mathbb{G}},\widetilde{l})$ has
  $(RD)$.
\end{theorem}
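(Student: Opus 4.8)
The plan is to establish the polynomial bound defining property $(RD)$ for $(\widehat{\mathbb{G}},\widetilde{l})$ directly, where $\widetilde{l}([\gamma(u^x)])=l_\gamma(x)+l_\Gamma(\gamma)$ is the length function from Proposition \ref{Proplength} and $(l_\gamma)_{\gamma\in\Gamma}$ is a family realizing the compatibility relations of Definition \ref{matchedlength}. Fix $k\in\N$ and $a\in q_k c_c(\widehat{\mathbb{G}})$, i.e.\ $\widetilde{l}$ takes values in $[k,k+1)$ on the finite set $\mathrm{supp}(a)$; we must bound $\Vert\mathcal{F}_{\mathbb{G}}(a)\Vert_{C(\mathbb{G})}$ by a polynomial in $k$ times $\Vert a\Vert_{\mathbb{G},0}$. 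Let $I\subset\Gamma$ be a set of representatives for $\Gamma/G$ and write $a=\sum_{\gamma\in I}ap_\gamma$. If $ap_\gamma\neq 0$ then $\gamma(x)\in\mathrm{supp}(a)$ for some $x$, so $l_\Gamma(\gamma)<k+1$ and $l_\gamma(x)<k+1$ for every $x\in\irr(G_\gamma)$ contributing to $ap_\gamma$; by $\beta$-invariance of $l_\Gamma$ (Proposition \ref{Proplength}) the same bound $l_\Gamma(r)<k+1$ holds for all $r\in\gamma\cdot G$.

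First I would compute $\mathcal{F}_{\mathbb{G}}(ap_\gamma)$ explicitly. Writing $ap_{\gamma(x)}=\sum_{r,s\in\gamma\cdot G}e_{rs}\otimes b^x_{rs}$ with $b^x_{rs}\in\mathcal{B}(H_x)$, so that $\pi^\gamma_{r,s}(a)=\sum_x b^x_{rs}p_x\in c_c(\widehat{G_\gamma})$, one unwinds the definition of $\mathcal{F}_{\mathbb{G}}$, the formula $u^{\gamma(x)}=\sum_{r,s}e_{rs}\otimes(1\otimes u_r v_{rs})u^x\circ\psi^\gamma_{r,s}$, the definition of $\mathcal{F}_{G_\gamma}$, and the facts $\dim(\gamma(x))=|\gamma\cdot G|\dim(x)$ and that $u^x\circ\psi^\gamma_{r,s}$ vanishes off $G_{r,s}$, to arrive at
\begin{equation*}
  \mathcal{F}_{\mathbb{G}}(ap_\gamma)=|\gamma\cdot G|\sum_{r,s\in\gamma\cdot G}u_r v_{rs}\bigl(\mathcal{F}_{G_\gamma}(\pi^\gamma_{s,r}(a))\circ\psi^\gamma_{r,s}\bigr)\in C(\mathbb{G}).
\end{equation*}
Summing over $\gamma\in I$ yields $\mathcal{F}_{\mathbb{G}}(a)=\sum_{r\in\Gamma}u_r F_r$ with $F_r\in C(G)$ vanishing unless $l_\Gamma(r)<k+1$. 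I would also record, by comparing the block decompositions of the canonical traces, the Plancherel-type identity
\begin{equation*}
  \Vert a\Vert_{\mathbb{G},0}^2=\sum_{\gamma\in I}|\gamma\cdot G|\sum_{r,s\in\gamma\cdot G}\Vert\pi^\gamma_{r,s}(a)\Vert_{G_\gamma,0}^2.
\end{equation*}

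Next I would bound $\Vert\sum_{r}F_r^*F_r\Vert_{C(G)}=\sup_{g\in G}\sum_{r\in\Gamma}|F_r(g)|^2$. Since $C(G)$ is commutative and, for fixed $r\in\gamma\cdot G$, the functions $v_{rs}$ ($s\in\gamma\cdot G$) have pairwise disjoint supports $G_{r,s}$, for $g\in G$ with $r\cdot g=s$ we get $|F_r(g)|=|\gamma\cdot G|\,|\mathcal{F}_{G_\gamma}(\pi^\gamma_{s,r}(a))(\psi^\gamma_{r,s}(g))|\leq|\gamma\cdot G|\,\Vert\mathcal{F}_{G_\gamma}(\pi^\gamma_{s,r}(a))\Vert$. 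The support of $\pi^\gamma_{s,r}(a)$ lies in $\{x\in\irr(G_\gamma):l_\gamma(x)<k+1\}$, so Lemma \ref{PGRD} applied to $G_\gamma$ together with the third bullet of Lemma \ref{LemmaPGRD} (this is where polynomial growth of $(\widehat{G},l_G)$ and matchedness enter) give $\Vert\mathcal{F}_{G_\gamma}(\pi^\gamma_{s,r}(a))\Vert\leq 2C(2k+2)^N\Vert\pi^\gamma_{s,r}(a)\Vert_{G_\gamma,0}$ for constants $C,N$ not depending on $\gamma$; and the second bullet of Lemma \ref{LemmaPGRD} applied to $\varepsilon_{G_\gamma}$ gives $|\gamma\cdot G|\leq C(2k+2)^N$. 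Feeding these in, summing over $r\in\gamma\cdot G$ and then over $\gamma\in I$, the surplus factor $|\gamma\cdot G|$ is absorbed against the $|\gamma\cdot G|$ in the Plancherel identity and one gets $\Vert\sum_r F_r^*F_r\Vert_{C(G)}\leq 4C^3(2k+2)^{3N}\Vert a\Vert_{\mathbb{G},0}^2$.

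Finally, since $(\Gamma,l_\Gamma)$ has $(RD)$, the classical operator-coefficient form of property $(RD)$ applied to the reduced crossed product $C(\mathbb{G})=\Gamma\ltimes_\alpha C(G)$ yields a polynomial $P_\Gamma$ such that $\Vert\sum_r u_r F_r\Vert_{C(\mathbb{G})}\leq P_\Gamma(k)\Vert\sum_r F_r^*F_r\Vert_{C(G)}^{1/2}$ whenever $F_r=0$ for $l_\Gamma(r)\geq k+1$; combined with the previous step this gives $\Vert\mathcal{F}_{\mathbb{G}}(a)\Vert_{C(\mathbb{G})}\leq 2C^{3/2}(2k+2)^{3N/2}P_\Gamma(k)\,\Vert a\Vert_{\mathbb{G},0}$, a polynomial bound, so $(\widehat{\mathbb{G}},\widetilde{l})$ has $(RD)$. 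I expect the two delicate points to be: obtaining the explicit formula for $\mathcal{F}_{\mathbb{G}}(ap_\gamma)$ with the correct $|\gamma\cdot G|$ factors (a lengthy but routine manipulation with the crossed-product and magic-unitary relations), and invoking the operator-coefficient version of $(RD)$ for $\Gamma$ — which is classical (it underlies permanence of $(RD)$ under crossed products, cf.\ Jolissaint) but must be phrased so that only the $\Gamma$-direction is measured in the $\ell^2$-sense.
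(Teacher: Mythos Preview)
Your overall architecture is exactly that of the paper: compute $\mathcal{F}_{\mathbb{G}}(a)$ and $\Vert a\Vert_{\mathbb{G},0}$ block-wise via the maps $\pi^\gamma_{r,s}$ (your two displayed formulas are precisely the paper's ``Claim''), control each $\Vert\mathcal{F}_{G_\gamma}(\pi^\gamma_{s,r}(a))\Vert$ using polynomial growth of $(\widehat{G},l_G)$ through Lemma~\ref{LemmaPGRD}, and then use $(RD)$ for $\Gamma$ in the $\Gamma$-direction. The estimates you sketch for the $G_\gamma$-pieces and for $|\gamma\cdot G|$ are correct.

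There is one genuine soft spot, in your last step. You invoke an ``operator-coefficient form of $(RD)$'' asserting
\[
\Bigl\Vert\sum_r u_r F_r\Bigr\Vert_{C(\mathbb{G})}\;\leq\; P_\Gamma(k)\,\Bigl\Vert\sum_r F_r^*F_r\Bigr\Vert_{C(G)}^{1/2}
\;=\;P_\Gamma(k)\,\sup_{g\in G}\Bigl(\sum_r|F_r(g)|^2\Bigr)^{1/2}.
\]
This is \emph{stronger} than what scalar $(RD)$ for $\Gamma$ gives you in a crossed product: the elementary convolution argument (bounding $\Vert(\sum_r u_rF_r)b\Vert_2$ for $b=\sum_t u_tG_t$ and reducing to $\psi*\phi$ on $\Gamma$) only yields
\[
\Bigl\Vert\sum_r u_r F_r\Bigr\Vert\;\leq\; P_\Gamma(k)\Bigl(\sum_r\Vert F_r\Vert_\infty^2\Bigr)^{1/2},
\]
with $\sum_r\Vert F_r\Vert_\infty^2=\sum_r\sup_g|F_r(g)|^2\ge \sup_g\sum_r|F_r(g)|^2$. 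The inequality with $\Vert\sum_r F_r^*F_r\Vert^{1/2}$ on the right is not a standard consequence of $(RD)$ for $\Gamma$ when the action on $C(G)$ is nontrivial, and you should not cite it as ``classical'' without proof.

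The good news is that the weaker bound is all you need. Replacing your pointwise estimate by $\Vert F_r\Vert_\infty\le|\gamma\cdot G|\max_{s\in\gamma\cdot G}\Vert\mathcal{F}_{G_\gamma}(\pi^\gamma_{s,r}(a))\Vert$, bounding $\max_s$ by $\sum_s$, and absorbing one factor $|\gamma\cdot G|$ via Lemma~\ref{LemmaPGRD} exactly as you already do, gives $\sum_r\Vert F_r\Vert_\infty^2\le Q(k)\,\Vert a\Vert_{\mathbb{G},0}^2$ for a polynomial $Q$. This is precisely what the paper does: it does not black-box any operator-valued $(RD)$, but computes $\Vert\mathcal{F}_{\mathbb{G}}(a)b\Vert_{2,h_{\mathbb{G}}}$ directly and dominates it by $\Vert\psi*\phi\Vert_{\ell^2(\Gamma)}$ with $\psi(r)=\Vert F_r\Vert_\infty$ and $\phi(t)=\Vert G_t\Vert_{L^2(G)}$, after which scalar $(RD)$ for $\Gamma$ applies. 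So either switch to the $(\sum_r\Vert F_r\Vert_\infty^2)^{1/2}$ bound, or unfold the convolution argument as the paper does; the rest of your proof then goes through unchanged.
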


\begin{proof}
  Let $a\in c_c(\widehat{\mathbb{G}})$ and write
  $a=\sum_{\gamma\in S}\sum_{x\in T_\gamma}ap_{\gamma(x)}$, where
  $S\subset I$ and $T_\gamma\subset {\rm Irr}(G_\gamma)$ are finite subsets.

\textbf{Claim.}\hspace{2ex}\textit{The following holds.
\begin{enumerate}
\item $\mathcal{F}_{\mathbb{G}}(a)=\sum_{\gamma\in S}\vert\gamma\cdot G\vert\left(\sum_{r,s\in\gamma\cdot G}u_rv_{rs}\mathcal{F}_{G_\gamma}(\pi^\gamma_{s,r}(a))\circ\psi^\gamma_{r,s}\right)$.
\item
  $\Vert a\Vert_{\mathbb{G},0}^2=\sum_{\gamma\in S}\vert\gamma\cdot G\vert\left(\sum_{r,s\in\gamma\cdot
      G}\Vert\pi^\gamma_{r,s}(a)\Vert_{G_\gamma,0}^2\right)$.
\end{enumerate}}

\textit{Proof of the Claim.}$(1)$. A direct computation gives:
\begin{eqnarray*}
  \mathcal{F}_{\mathbb{G}}(a)&=&\sum_{\gamma\in S,\,x\in T_\gamma}\vert\gamma\cdot G\vert\dim(x)({\rm Tr}_{l^2(\gamma\cdot G)\ot H_x}(\gamma(u^{x})ap_{\gamma(x)}\ot 1)\\
                             &=&\sum_{\gamma\in S,x\in T_\gamma}\vert\gamma\cdot G\vert\dim(x)\sum_{r,s\in\gamma\cdot G}u_rv_{rs}({\rm Tr}_x\ot \id)(u^x\circ\psi^\gamma_{r,s}\pi^\gamma_{s,r}(a)p_x\ot 1)\\
                             &=&\sum_{\gamma\in S}\vert\gamma\cdot G\vert\sum_{r,s\in\gamma\cdot G}u_rv_{rs}\mathcal{F}_{G_\gamma}(\pi^\gamma_{s,r}(a))\circ\psi^\gamma_{r,s}.
\end{eqnarray*}
$(2)$. Since $\pi_\gamma$ is a $*$-homomorphism, we have
$\pi^\gamma_{r,s}(a^*a)=\sum_{t\in\gamma\cdot G}\pi^\gamma_{t,r}(a)^*\pi^\gamma_{t,s}(a)$ hence,
\begin{eqnarray*}
\Vert a\Vert_{\mathbb{G},0}^2&=&\sum_{\gamma\in S,x\in T_\gamma}\vert\gamma\cdot G\vert\dim(x)\sum_{r,s\in\gamma\cdot G}({\rm Tr}_x\ot \id)(\pi^\gamma_{s,r}(a)^*\pi^\gamma_{r,s}(a))\\
&=&\sum_{\gamma\in S}\vert\gamma\cdot G\vert\sum_{r,s\in\gamma\cdot G}\Vert\pi^\gamma_{r,s}(a)\Vert_{G_\gamma,0}^2.
\end{eqnarray*}

Let us now prove the theorem. Let
$b=\sum_{\gamma\in S'}\sum_{t,t'\in\gamma\cdot G}u_tv_{tt'}F_\gamma\circ\psi^\gamma_{t,t'}\in C(\mathbb{G})$,
where $F_\gamma\in C(G_\gamma)$ and $S'\subset I$ is a finite subset. For all
$r\in\Gamma$, we denote by $\gamma_r$ the unique element in $I$ such that
$\gamma_r\cdot G=r\cdot G$. We may re-order the sums and write:
$$\mathcal{F}_{\mathbb{G}}(a)=\sum_{r\in\Gamma}1_{S\cdot G}(r)\vert r\cdot G\vert\left(\sum_{s\in r\cdot G}u_rv_{rs}\mathcal{F}_{G_{\gamma_r}}(\pi^{\gamma_r}_{s,r}(a))\circ\psi^{\gamma_r}_{r,s}\right)\text{ and }b=\sum_{t\in \Gamma}u_t1_{S'\cdot G}(t)\left(\sum_{t'\in t\cdot G}v_{tt'} F_{\gamma_t}\circ\psi^{\gamma_t}_{t,t'}\right).$$
Also,
$\Vert a\Vert_{\mathbb{G},0}^2=\sum_{r\in\Gamma}1_{S\cdot G}(r)\vert r\cdot G\vert\left(\sum_{s\in r\cdot
    G}\Vert\pi^{\gamma_r}_{r,s}(a)\Vert_{G_{\gamma_r},0}^2\right)$. Then,
$\Vert \mathcal{F}_{\mathbb{G}}(a)b\Vert_{2,h_{\mathbb{G}}}^2$ is equal to :

\begin{eqnarray*}
& &
\left\Vert \sum_{r,t\in\Gamma} u_{rt}1_{S\cdot G}(r)1_{S'\cdot G}(t)\vert r\cdot G\vert\left(\sum_{s\in r\cdot G,t'\in t\cdot G}v_{rs}\circ\alpha_t\mathcal{F}_{G_{\gamma_r}}(\pi^{\gamma_r}_{s,r}(a))\circ\psi^{\gamma_r}_{r,s}\circ\alpha_tv_{tt'}F_{\gamma_t}\circ\psi^{\gamma_t}_{t,t'}\right)\right\Vert^2_{2,h_{\mathbb{G}}}\\
&=&\sum_{x\in\Gamma}\left\Vert\sum_{\substack{r,t\in\Gamma\\rt=x}}1_{S\cdot G}(r)1_{S'\cdot G}(t)\vert r\cdot G\vert \left(\sum_{s\in r\cdot G,t'\in t\cdot G}v_{rs}\circ\alpha_t\mathcal{F}_{G_{\gamma_r}}(\pi^{\gamma_r}_{s,r}(a))\circ\psi^{\gamma_r}_{r,s}\circ\alpha_tv_{tt'}F_{\gamma_t}\circ\psi^{\gamma_t}_{t,t'}\right)\right\Vert^2_{2}\\
&=&\sum_{x\in\Gamma}\left\Vert \sum_{\substack{r,t\in\Gamma\\rt=x}}1_{S\cdot G}(r)1_{S'\cdot G}(t)\vert r\cdot G\vert \left(\sum_{s\in r\cdot G}v_{rs}\circ\alpha_t\mathcal{F}_{G_{\gamma_r}}(\pi^{\gamma_r}_{s,r}(a))\circ\psi^{\gamma_r}_{r,s}\circ\alpha_t\right)\left(\sum_{t'\in t\cdot G}v_{tt'}F_{\gamma_t}\circ\psi^{\gamma_t}_{t,t'}\right)\right\Vert^2_{2}\\
&\leq &\sum_x\left(\sum_{\substack{r,t\in\Gamma\\rt=x}}1_{S\cdot G}(r)1_{S'\cdot G}(t)\vert r\cdot G\vert\left \Vert \sum_{s\in r\cdot G}v_{rs}\circ\alpha_t\mathcal{F}_{G_{\gamma_r}}(\pi^{\gamma_r}_{s,r}(a))\circ\psi^{\gamma_r}_{r,s}\circ\alpha_t\right\Vert_{\infty}\left\Vert \sum_{t'\in t\cdot G}v_{tt'}F_{\gamma_t}\circ\psi^{\gamma_t}_{t,t'}\right\Vert_{2}\right)^2\\
&=&\sum_x\left(\sum_{\substack{r,t\in\Gamma\\rt=x}}\left(1_{S\cdot G}(r)\vert r\cdot G\vert\left \Vert \sum_{s\in r\cdot G}v_{rs}\mathcal{F}_{G_{\gamma_r}}(\pi^{\gamma_r}_{s,r}(a))\circ\psi^{\gamma_r}_{r,s}\right\Vert_{\infty}\right)\left(1_{S'\cdot G}(t)\left\Vert \sum_{t'\in t\cdot G}v_{tt'}F_{\gamma_t}\circ\psi^{\gamma_t}_{t,t'}\right\Vert_{2}\right)\right)^2\\
&=&\Vert\psi*\phi\Vert_{l^2(\Gamma)}^2,
\end{eqnarray*}
where $\Vert\cdot\Vert_2$ and $\Vert\cdot\Vert_{\infty}$ denote respectively the
${\rm L}^2$-norm and the supremum norm on $C(G)$ and
$\psi,\phi\,:\,\Gamma\rightarrow\mathbb{R}_+$ are finitely supported functions defined by :
$$\psi(r):=1_{S\cdot G}(r)\vert r\cdot G\vert\left \Vert \sum_{s\in r\cdot G}v_{rs}\mathcal{F}_{G_{\gamma_r}}(\pi^{\gamma_r}_{s,r}(a))\circ\psi^{\gamma_r}_{r,s}\right\Vert_{\infty}\text{ and }\phi(t):=1_{S'\cdot G}(t)\left\Vert \sum_{t'\in t\cdot G}v_{tt'}F_{\gamma_t}\circ\psi^{\gamma_t}_{t,t'}\right\Vert_{2},$$
Note that
$\Vert\phi\Vert_{l^2(\Gamma)}^2=\Vert b\Vert_{2,h_{\mathbb{G}}}^2$. Moreover, one has, since
$\psi^{\gamma}_{r,s}\,:\, G_{r,s}\rightarrow G_\gamma$ is an homeomorphism,
\begin{eqnarray*}
\Vert\psi\Vert_{l^2(\Gamma)}^2&=&\sum_{r\in\Gamma}1_{S\cdot G}(r)\vert r\cdot G\vert ^2\left \Vert \sum_{s\in r\cdot G}v_{rs}\mathcal{F}_{G_{\gamma_r}}(\pi^{\gamma_r}_{s,r}(a))\circ\psi^{\gamma_r}_{r,s}\right\Vert_{\infty}^2\\
&\leq &\sum_{r\in\Gamma}1_{S\cdot G}(r)\vert r\cdot G\vert^3\sum_{s\in r\cdot G}\left \Vert v_{rs}\mathcal{F}_{G_{\gamma_r}}(\pi^{\gamma_r}_{s,r}(a))\circ\psi^{\gamma_r}_{r,s}\right\Vert_{\infty}^2\\
&=&\sum_{r\in\Gamma}1_{S\cdot G}(r)\vert r\cdot G\vert^3\sum_{s\in r\cdot G}\left \Vert \mathcal{F}_{G_{\gamma_r}}(\pi^{\gamma_r}_{s,r}(a))\right\Vert_{C(G_{\gamma_r})}^2.\\
\end{eqnarray*}
For $k\in\N$ let
$p_k=\sum_{\gamma\in I,x\in{\rm Irr}(G_\gamma)\,:\,k\leq l(\gamma(x))< k+1}p_{\gamma(x)}\in
l^\infty(\widehat{\mathbb{G}})$,
$p^{G_\gamma}_k=\sum_{x\in{\rm Irr}(G_\gamma)\,:\,k\leq l_{G_\gamma}(x)< k+1}p_{x}\in
l^\infty(\widehat{G}_\gamma)$ and suppose from now on that
$a\in p_kc_c(\widehat{\mathbb{G}})$. Hence, we must have
$S\subset\{\gamma\in\Gamma\,:\,l_\Gamma(\gamma)< k+1\}$ and, for all
$\gamma\in S$,
$T_\gamma\subset\{x\in {\rm Irr}(G_\gamma)\,:\,l_{G_\gamma}(x)<k+1\}$. Hence, for all
$\gamma\in S$ and all $r,s\in\gamma\cdot G$ one has
$\pi^\gamma_{r,s}(a)\in q^\gamma_kc_c(\widehat{G}_\gamma)$, where
$q_k^\gamma=\sum_{j=0}^kp_j^{G_\gamma}$.

Since $(\widehat{G},l_{G})$ has polynomial growth, there exists $C>0$
and $N\in\N$ satisfying the properties of Lemma \ref{LemmaPGRD}. In
particular, one has, for all $\gamma\in\Gamma$,
$\vert \gamma\cdot G\vert\leq C(2l_\Gamma(\gamma)+1)^{N}$. Moreover, since
$S\subset\{g\in\Gamma\,:\, l_\Gamma(g)<k+1\}$ and $l_\Gamma$ is
$\beta$-invariant, it follows that
$S\cdot G\subset \{g\in\Gamma\,:\, l_\Gamma(g)<k+1\}$. By Lemma \ref{basicLemma} (and Lemma
\ref{LemmaPGRD}) we deduce that:
\begin{eqnarray*}
\Vert\psi\Vert_{l^2(\Gamma)}^2
&\leq&\sum_{r\in\Gamma}1_{S\cdot G}(r)\vert r\cdot G\vert^3\sum_{s\in r\cdot G}\left \Vert v_{\gamma_r\gamma_r}\mathcal{F}_{G}(\widetilde{\pi^{\gamma_r}_{s,r}(a)})\right\Vert^2\leq
\sum_{r\in\Gamma}1_{S\cdot G}(r)\vert r\cdot G\vert^3\sum_{s\in r\cdot G}\left \Vert\mathcal{F}_{G}(\widetilde{\pi^{\gamma_r}_{s,r}(a)})\right\Vert^2\\
&\leq &\sum_{r\in\Gamma}1_{S\cdot G}(r)\vert r\cdot G\vert^3\sum_{s\in r\cdot G}C^2(k+l_\Gamma(\gamma_r)+1)^{2N}\left \Vert\widetilde{\pi^{\gamma_r}_{s,r}(a)}\right\Vert_{G,0}^2\\
&\leq&C^2(2k+2)^{2N}\sum_{r\in\Gamma}1_{S\cdot G}(r)\vert r\cdot G\vert^3\sum_{s\in r\cdot G}\left \Vert \pi^{\gamma_r}_{s,r}(a)\right\Vert_{G_{\gamma_r},0}^2\\
&\leq&C^4(2k+3)^{4N}\sum_{r\in\Gamma}1_{S\cdot G}(r)\vert r\cdot G\vert\sum_{s\in r\cdot G}\left \Vert \pi^{\gamma_r}_{s,r}(a)\right\Vert_{G_{\gamma_r},0}^2=C^4(2k+3)^{4N}\Vert a\Vert^2_{\mathbb{G},0}.
\end{eqnarray*}

Since $(\Gamma,l_\Gamma)$ has $(RD)$, let $C_2>0$ and $N_2\in\N$ such that for all
$k\in\N$, for all function $\xi$ on $\Gamma$ supported on
$\{g\in\Gamma\,:\,l_\Gamma(g)<k+1\}$, we have
$\Vert\xi*\eta\Vert_{l^2(\Gamma})\leq C_2(k+1)^{N_2}\Vert\xi\Vert_{l^2(\Gamma)}\Vert\eta\Vert_{l^2(\Gamma)}$. Note that
$\psi$ is supported on $S\cdot G$ and
$S\cdot G\subset \{g\in\Gamma\,:\, l_\Gamma(g)<k+1\}$. Hence, it follows from the preceding
computations that:
\begin{eqnarray*}
\Vert \mathcal{F}_{\mathbb{G}}(a)b\Vert_{2,h_{\mathbb{G}}}^2
&\leq&\Vert\psi*\phi\Vert_{l^2(\Gamma)}^2\leq C_2^2(k+1)^{2N_2}\Vert\psi\Vert_{l^2(\Gamma)}\Vert\phi\Vert_{l^2(\Gamma)}
\leq C^4(2k+3)^{4N}C_2^2(k+1)^{2N_2}\Vert a\Vert_{\mathbb{G},0}^2\Vert b\Vert_{2,h_{\mathbb{G}}}^2\\
&=&(P(k)\Vert a\Vert_{\mathbb{G},0}^2\Vert b\Vert_{2,h_{\mathbb{G}}})^2.
\end{eqnarray*}
where $P(X)=C^2C_2^2(2X+3)^{2N}(X+1)^{N_2}$. It concludes the proof.
\end{proof}

To complete the proof of Theorem A, we need the following Proposition.

\begin{proposition} Assume that there exists a length function $l$ on
  $\irr(\mathbb{G})$ such that $(\widehat{\mathbb{G}},l)$ has $(RD)$
  and consider the matched pair of length functions $(l_\Gamma, l_G)$
  associated to $l$ given in Proposition \ref{Proplength}. Then
  $(\Gamma,l_\Gamma)$ has $(RD)$ and $(\widehat{G},l_G)$ has polynomial growth.
\end{proposition}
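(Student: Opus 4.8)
The idea is to read off both conclusions from the two natural ``building blocks'' of $C(\mathbb{G})=\Gamma\,{}_\alpha\ltimes C(G)$. First, the orbit of $\gamma=1$ is trivial: $1\cdot G=\{1\}$, $G_1=G$, $\psi^1_{1,1}=\id$, $u_1=1$ and $v_{11}=1$, so that for $x\in\irr(G)$ one has $1(x)=u^x$; thus $x\mapsto[1(x)]$ embeds $\irr(G)$ into $\irr(\mathbb{G})$ preserving dimensions and (by definition of $l_G$) lengths, and $\chi(1(x))=\chi(x)\in C(G)\subset C(\mathbb{G})$, the inclusion $C(G)\hookrightarrow C(\mathbb{G})$ into the reduced crossed product being isometric. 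Second, the Haar state formula $h(u_\gamma F)=\delta_{\gamma,1}\int_G F\,d\nu$ gives $\langle u_\gamma F_1,u_\mu F_2\rangle_h=\delta_{\gamma,\mu}\int_G\overline{F_2}F_1\,d\nu$, so the GNS space $L^2(\mathbb{G})$ is canonically $l^2(\Gamma)\ot L^2(G,\nu)$ with left multiplication by $u_\gamma$ acting as $\lambda_\gamma\ot 1$; consequently $\Vert\sum_\gamma\xi(\gamma)u_\gamma\Vert_{C(\mathbb{G})}=\Vert\lambda_\xi\Vert_{\mathcal{B}(l^2(\Gamma))}$ for every finitely supported $\xi\colon\Gamma\to\C$.

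\textbf{Polynomial growth of $(\widehat G,l_G)$.} Fix $k\in\N$ and a \emph{finite} subset $F\subset\{x\in\irr(G):k\le l_G(x)<k+1\}$, and put $a:=\sum_{x\in F}p_{1(x)}\in c_c(\widehat{\mathbb{G}})$. Since $l([1(x)])=l_G(x)$ we have $a\in q_kc_c(\widehat{\mathbb{G}})$, and a direct computation gives $\mathcal{F}_{\mathbb{G}}(a)=\sum_{x\in F}\dim(x)\chi(x)\in C(G)$ together with $\Vert a\Vert^2_{\mathbb{G},0}=\sum_{x\in F}\dim(x)^2$. The evaluation ${\rm ev}_e$ at $e\in G$ is a state on $C(G)$ with ${\rm ev}_e(\chi(x))=\dim(x)$, so $\Vert\mathcal{F}_{\mathbb{G}}(a)\Vert_{C(\mathbb{G})}=\Vert\mathcal{F}_{\mathbb{G}}(a)\Vert_{C(G)}\ge\sum_{x\in F}\dim(x)^2$. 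If $P$ is an $(RD)$-polynomial for $(\widehat{\mathbb{G}},l)$, this forces $\sum_{x\in F}\dim(x)^2\le P(k)\sqrt{\sum_{x\in F}\dim(x)^2}$, i.e.\ $\sum_{x\in F}\dim(x)^2\le P(k)^2$. Letting $F$ exhaust the slice (as in the last part of the proof of Lemma~\ref{LemmaPGRD}) gives $\sum_{k\le l_G(x)<k+1}\dim(x)^2\le P(k)^2$, which is polynomial growth.

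\textbf{$(RD)$ for $(\Gamma,l_\Gamma)$.} By Theorem~\ref{IrrBC}(3) one has $\gamma(\varepsilon_{G_\gamma})\simeq\mu(\varepsilon_{G_\mu})$ whenever $\gamma\cdot G=\mu\cdot G$, so $l_\Gamma(\gamma)=l([\gamma(\varepsilon_{G_\gamma})])$ is constant on $\beta$-orbits; write $z_O$ for the class attached to the orbit $O\in\Gamma/G$, with $\dim(z_O)=|O|$ and representative $w^O=\sum_{r,s\in O}e_{rs}\ot u_rv_{rs}$ (note $\sum_{s\in O}u_rv_{rs}=u_r$). Let $\xi\colon\Gamma\to\C$ be finitely supported with ${\rm supp}(\xi)\subset\{\gamma:k\le l_\Gamma(\gamma)<k+1\}$. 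Then $\sum_\gamma\xi(\gamma)u_\gamma=\sum_O\sum_{r,s\in O}\xi(r)\,u_rv_{rs}$ is a linear combination of matrix coefficients of those $z_O$ with $O\cap{\rm supp}(\xi)\ne\emptyset$, each of length in $[k,k+1)$; since the Fourier transform is a bijection $c_c(\widehat{\mathbb{G}})\to{\rm Pol}(\mathbb{G})$ carrying $p_zc_c(\widehat{\mathbb{G}})$ onto the span of coefficients of $z$, the preimage $a:=\mathcal{F}_{\mathbb{G}}^{-1}\big(\sum_\gamma\xi(\gamma)u_\gamma\big)$ lies in $q_kc_c(\widehat{\mathbb{G}})$. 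Explicitly $ap_{z_O}$ is the operator on $l^2(O)$ given by $e_r\mapsto\frac{\xi(r)}{|O|}\sum_{s\in O}e_s$, and a routine check using $\sum_{s\in O}v_{rs}=1$ yields $\mathcal{F}_{\mathbb{G}}(a)=\sum_\gamma\xi(\gamma)u_\gamma$ and $\Vert a\Vert^2_{\mathbb{G},0}=\sum_O|O|\cdot\frac1{|O|}\sum_{r\in O}|\xi(r)|^2=\Vert\xi\Vert^2_{l^2(\Gamma)}$ (alternatively, use the Plancherel identity $\Vert a\Vert_{\mathbb{G},0}=\Vert\mathcal{F}_{\mathbb{G}}(a)\Vert_{2,h}$ and $h(u_\gamma^*u_\mu)=\delta_{\gamma,\mu}$). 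Applying $(RD)$ for $(\widehat{\mathbb{G}},l)$ and then the norm identity of the first paragraph gives $\Vert\lambda_\xi\Vert_{\mathcal{B}(l^2(\Gamma))}=\Vert\sum_\gamma\xi(\gamma)u_\gamma\Vert_{C(\mathbb{G})}\le P(k)\Vert\xi\Vert_{l^2(\Gamma)}$, hence $\Vert\xi*\eta\Vert_{l^2(\Gamma)}\le P(k)\Vert\xi\Vert_{l^2(\Gamma)}\Vert\eta\Vert_{l^2(\Gamma)}$ for all $\eta$ and all $\xi$ supported on the $k$-th $l_\Gamma$-slice; that is, $(\Gamma,l_\Gamma)$ has $(RD)$.

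No step is a genuine obstacle; the only care required is the bookkeeping with the matrix units $e_{rs}$ and the magic-unitary entries $v_{rs}$ in the two Fourier-transform/Sobolev-norm computations, and the verification that $C(G)$ and the family $\{u_\gamma\}$ sit inside $C(\mathbb{G})$ in the two isometric ways described above, so that the $(RD)$ inequality for $\widehat{\mathbb{G}}$ transfers correctly to $\widehat G$ and to $\Gamma$.
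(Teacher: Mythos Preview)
Your proof is correct, and both halves follow a somewhat different route from the paper's.

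For polynomial growth of $(\widehat{G},l_G)$, the paper only argues that $(\widehat{G},l_G)$ inherits property~$(RD)$ from $(\widehat{\mathbb{G}},l)$ via the length-- and dimension--preserving embedding $\irr(G)\hookrightarrow\irr(\mathbb{G})$, and then tacitly invokes the fact (recalled earlier in the paper, from \cite{Ve07}) that for a co-amenable compact quantum group such as the classical $G$, property~$(RD)$ forces polynomial growth. Your argument bypasses this implication entirely: you test the $(RD)$ inequality on the characteristic element $a=\sum_{x\in F}p_{1(x)}$ and use $\operatorname{ev}_e(\chi(x))=\dim(x)$ to extract $\sum_{x\in F}\dim(x)^2\le P(k)^2$ directly. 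This is the same trick the paper deploys in the proof of Lemma~\ref{LemmaPGRD} (third bullet), so it is self-contained and avoids citing Vergnioux's converse.

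For $(RD)$ of $(\Gamma,l_\Gamma)$, the paper chooses the \emph{diagonal} element $\widetilde{\xi}=\sum_{\gamma\in I}\frac{1}{\vert\gamma\cdot G\vert}\bigl(\sum_{r\in\gamma\cdot G}\xi(r)e_{rr}\bigr)p_{\gamma(1)}$, whose Fourier transform is $\sum_r\xi(r)u_rv_{rr}$ (not $\sum_r\xi(r)u_r$), and then passes to $C^*_r(\Gamma)$ through the contractive $*$-morphism $\Psi(u_\gamma F)=\lambda_\gamma F(1)$; this yields only the inequality $\Vert\widetilde{\xi}\Vert_{\mathbb{G},0}\le\Vert\xi\Vert_2$. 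You instead take the rank-one element $ap_{z_O}\colon e_r\mapsto\frac{\xi(r)}{\vert O\vert}\sum_s e_s$, which satisfies $\mathcal{F}_{\mathbb{G}}(a)=\sum_\gamma\xi(\gamma)u_\gamma$ exactly (thanks to $\sum_s v_{rs}=1$) and $\Vert a\Vert_{\mathbb{G},0}=\Vert\xi\Vert_2$ with equality; the passage to $C^*_r(\Gamma)$ is then the isometric identification $u_\gamma\mapsto\lambda_\gamma\otimes 1$ in the GNS picture $L^2(\mathbb{G})\cong l^2(\Gamma)\otimes L^2(G)$. Both arguments are short; yours is marginally cleaner because it lands on $\sum_\gamma\xi(\gamma)u_\gamma$ without the residual $v_{rr}$ and gives the sharp Sobolev-norm identity, while the paper's has the virtue that the auxiliary element $\widetilde{\xi}$ is easier to write down and the evaluation morphism $\Psi$ is immediately recognisable.
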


\begin{proof}
  Suppose that $(\widehat{\mathbb{G}},l)$ has $(RD)$. The fact that
  $(\widehat{G},l_G)$ has $(RD)$ follows from the general theory
  (since $C(G)\subset C(\mathbb{G})$ intertwines the comultiplication and
  the associated injection $\irr(G)\rightarrow \irr(\mathbb{G})$, actually given
  by $(x\mapsto 1(x))$, preserves the length functions). Let us show that
  $(\Gamma,l_\Gamma)$ has $(RD)$. Let $k\in \N$ and
  $\xi\,:\,\Gamma\rightarrow\C$ be a finitely supported function with support in
  $\{\gamma\in\Gamma\,:\, k\leq l_\Gamma(\gamma)<k+1\}$. Define
  $\widetilde{\xi}\in c_c(\widehat{\mathbb{G}})$ by
  $\widetilde{\xi}=\sum_{\gamma\in I}\frac{1}{\vert\gamma\cdot G\vert}\left(\sum_{r\in\gamma\cdot
      G}\xi(r)e_{rr}\right)p_{\gamma(1)}$, where we recall
  $e_{rs}\in \mathcal{B}(l^2(\gamma\cdot G))$ for
  $r,s\in\gamma\cdot G$ are the matrix units associated to the canonical
  orthonormal basis. Then,
  $$\mathcal{F}_{\mathbb{G}}(\widetilde{\xi})=\sum_{\gamma\in I}\sum_{r\in\gamma\cdot G}\xi(r)({\rm Tr}_{l^2(\gamma\cdot G)}\ot\id)(u^{\gamma(1)}(e_{rr}\ot 1))=\sum_{\gamma\in I}\sum_{r\in\gamma\cdot G}\xi(r)u_rv_{rr}\quad\text{ also,}$$
  $$\Vert \widetilde{\xi}\Vert_{\mathbb{G},0}^2=\sum_{\gamma\in I}\vert\gamma\cdot G\vert{\rm Tr}_{l^2(\gamma\cdot G)}(\sum_{r\in\gamma\cdot G}\frac{\vert\xi(r)\vert^2}{\vert\gamma\cdot G\vert^2}e_{rr})=\sum_{\gamma\in I}\frac{1}{\vert\gamma\cdot G\vert}\sum_{r\in\gamma\cdot G}\vert\xi(r)\vert^2\leq\sum_{\gamma\in I}\sum_{r\in\gamma\cdot G}\vert\xi(r)\vert^2=\Vert\xi\Vert_2^2.$$
  Since $\xi$ is supported in
  $\{\gamma\in\Gamma\,:\, k\leq l_\Gamma(\gamma)<k+1\}$ and $l_\Gamma$ is
  $\beta$-invariant, it follows that
  ${\rm supp}(\widetilde{\xi})\subset\{z\in\irr(\mathbb{G})\,:\, k\leq
  l(z)<k+1\}$. Hence, denoting by $P$ a polynomial witnessing $(RD)$
  for $(\widehat{\mathbb{G}},l)$, we have:
  $$\left\Vert \sum_{\gamma\in I}\sum_{r\in\gamma\cdot G}\xi(r)u_rv_{rr}\right\Vert\leq P(k)\Vert\xi\Vert_2.$$
  Denote by $\Psi$ the unital $\ast$-morphism
  $\Psi\,:\, C(\mathbb{G})=\Gamma\ltimes C(G)\rightarrow C^*_r(\Gamma)$ such that
  $\Psi(u_\gamma F)=\lambda_\gamma F(1)$ for all $\gamma\in\Gamma$ and
  $F\in C(G)$. Since $\Psi$ has norm one, denoting by
  $\lambda(\xi)\in C^*_r(\Gamma)$ the convolution operator by $\xi$, we have
  $$\Vert\lambda(\xi)\Vert=\left\Vert\sum_{\gamma\in I}\sum_{r\in\gamma\cdot G}\xi(r)\lambda_r\right\Vert=\left\Vert\Psi\left(\sum_{\gamma\in I}\sum_{r\in\gamma\cdot G}\xi(r)u_rv_{rr}\right)\right\Vert\leq\left\Vert \sum_{\gamma\in I}\sum_{r\in\gamma\cdot G}\xi(r)u_rv_{rr}\right\Vert\leq P(k)\Vert\xi\Vert_2.$$
  This concludes the proof.
\end{proof}

\noindent
{\sc Pierre FIMA} \\ \nopagebreak
  {Univ Paris Diderot, Sorbonne Paris Cit\'e, IMJ-PRG, UMR 7586, F-75013, Paris, France \\
  Sorbonne Universit\'es, UPMC Paris 06, UMR 7586, IMJ-PRG, F-75005, Paris, France \\
  CNRS, UMR 7586, IMJ-PRG, F-75005, Paris, France \\
\em E-mail address: \tt pierre.fima@imj-prg.fr}

\vspace{0.2cm}

\noindent
{\sc Hua WANG} \\ \nopagebreak
  {Univ Paris Diderot, Sorbonne Paris Cit\'e, IMJ-PRG, UMR 7586, F-75013, Paris, France \\
  Sorbonne Universit\'es, UPMC Paris 06, UMR 7586, IMJ-PRG, F-75005, Paris, France \\
  CNRS, UMR 7586, IMJ-PRG, F-75005, Paris, France \\
\em E-mail address: \tt hua.wang@imj-prg.fr}

\end{document}